\newtheorem{theorem}{Theorem}[section]
\newtheorem{proposition}[theorem]{Proposition}
\newtheorem{lemma}[theorem]{Lemma}
\newtheorem{co}[theorem]{Corollary}
\theoremstyle{definition}
\newtheorem{rem}[theorem]{Remark}
\newcommand{\bel}{\begin{equation} \label}
\newcommand{\ee}{\end{equation}}
\newcommand{\pd}{\partial}
\newcommand{\R}{{\mathbb R}}
\newcommand{\re}{\mathfrak R}
\def\epsilon{\varepsilon}
\def\phi {\varphi}
\def\beq{\begin{equation}}
\def\eeq{\end{equation}}
\renewcommand{\leq}{\leqslant}
\renewcommand{\geq}{\geqslant}
\newcommand{\bea}{\begin{eqnarray}}
\newcommand{\eea}{\end{eqnarray}}
\newcommand{\beas}{\begin{eqnarray*}}
\newcommand{\eeas}{\end{eqnarray*}}
\providecommand{\abs}[1]{\left\lvert#1\right\rvert}
\providecommand{\norm}[1]{\left\lVert#1\right\rVert}
\title[Determination of non-compactly supported  electromagnetic potentials]{Determination of  non-compactly supported  electromagnetic potentials in unbounded closed waveguide}
\author{Yavar Kian}
\address{Aix Marseille Univ, Universit\'e de Toulon, CNRS, CPT, Marseille, France.}
\email{yavar.kian@univ-amu.fr}
\begin{document}
\begin{abstract}
We study the inverse problem  of determining a magnetic Schr\"odinger operator in an unbounded closed waveguide from boundary measurements. We consider this problem with a general  closed waveguide in the sense that we only require  our unbounded domain to be contained into an infinite cylinder. In this context we prove the unique recovery of the magnetic field and the electric potential associated with general bounded and non-compactly supported electromagnetic potentials. By assuming that the electromagnetic potentials are known on the neighborhood of the boundary outside a compact set, we  even prove the unique determination of the magnetic field and the electric potential from measurements restricted to a bounded subset of the infinite boundary. Finally, in the case of a waveguide  taking the form of an infinite cylindrical domain, we  prove the recovery  of the magnetic field and the electric potential from partial data corresponding to restriction of Neumann boundary measurements  to slightly more than half of the boundary. We establish all these results by mean of a new class of  complex geometric optics solutions and of Carleman estimates suitably designed for our problem stated in an unbounded domain and  with bounded electromagnetic potentials.

\medskip
\noindent
{\bf Keywords :} Inverse problems, elliptic equations, electromagnetic potential, Carleman estimate, unbounded domain, closed  waveguide,  partial data.

\medskip
\noindent
{\bf Mathematics subject classification 2010 :} 35R30, 35J15.
\end{abstract}
\maketitle


\section{Introduction}
\label{sec-intro}
\setcounter{equation}{0}
\subsection{Statement of the problem}
Let  $\Omega$ be an unbounded open   set of $\R^3$  corresponding  to a closed waveguide. Here by closed waveguide we mean that
there exists $\omega$  a $\mathcal C^2$  bounded open simply connected set of $\mathbb{R}^2$ such that the following condition is fulfilled
\bel{closed}\Omega \subset  \omega\times \R.\ee
 For $A\in L^\infty(\Omega)^3$, we define the magnetic Laplacian $\Delta_A$ given by
$$\Delta_A=\Delta+2iA\cdot\nabla+i\textrm{div}(A)-|A|^2.$$
According to  \cite[Theorem 3.4 page 223]{EE}, for any $u\in H^1(\Omega)$ and $\phi\in\mathcal C^\infty_0(\Omega)$, we have $u\phi\in W^{1,1}_0(\Omega)$, where $W^{1,1}_0(\Omega)$ denotes the closure of $\mathcal C^\infty_0(\Omega)$ in $W^{1,1}(\Omega)$. Therefore, using a density argument we can prove that, for any $u\in H^1(\Omega)$ and $A\in L^\infty(\Omega)^3$, we have div$(A)u\in D'(\Omega)$ and  $\Delta_A u\in D'(\Omega)$.
Thus, for $q \in L^\infty(\Omega;\mathbb C)$ and $u\in H^1(\Omega)$,  we can introduce the  equation
\bel{eq1}
\Delta_Au + q u  = 0,\quad\mbox{in}\ \Omega
\ee
in the sense of distributions.  Since we make no assumption on the boundary of $\Omega$, in a similar way to \cite{KU}, we define the trace map $\tau$ on $H^1(\Omega)$ by $\tau u=[u]$ with $[u]$  the class of $u$ in the quotient space $\frac{H^1(\Omega)}{H^1_0(\Omega)}$, where $H^1_0(\Omega)$ denotes the closure of $\mathcal C^\infty_0(\Omega)$ in $H^1(\Omega)$. We associate to any solution $u\in H^1(\Omega)$ of \eqref{eq1} the trace $ N_{A,q}u\in \left(\frac{H^1(\Omega)}{H^1_0(\Omega)}\right)'$, with $\left(\frac{H^1(\Omega)}{H^1_0(\Omega)}\right)'$ the dual space of $\frac{H^1(\Omega)}{H^1_0(\Omega)}$, defined by
$$\left\langle N_{A,q}u,\tau g\right\rangle_{\left(\frac{H^1(\Omega)}{H^1_0(\Omega)}\right)',\frac{H^1(\Omega)}{H^1_0(\Omega)} }:=-\int_\Omega (\nabla+iA)u\cdot \overline{(\nabla+iA)g}dx+\int_\Omega qu\overline{g}dx,\ g\in H^1(\Omega).$$
Here, by using a   density argument, one can prove that this map is well defined for $u$ solving \eqref{eq1} since for $g\in H^1_0(\Omega)$ the right hand side of this identity is equal to $0$.

Recall that for  $\Omega=\omega\times \R$ one can identify $\frac{H^1(\Omega)}{H^1_0(\Omega)}$ to $H^{\frac{1}{2}}(\pd\omega\times\R):=L^2(\R;H^{\frac{1}{2}}(\pd\omega)\cap H^{\frac{1}{2}}(\R;L^2(\omega))$. Then, for $u\in H^1(\Omega)$ solving \eqref{eq1} and $A\in W^{1,\infty}(\Omega)^3$, we have $\tau u=u_{|\pd\Omega}$ and $$N_{A,q}u=-\pd_{\nu_A}u=-\pd_\nu u-i(A\cdot \nu)u\in H^{-\frac{1}{2}}(\pd\omega\times\R)=(H^{\frac{1}{2}}(\pd\omega\times\R))',$$ with $\nu$ the outward unit normal vector to $\pd\omega\times \R$. This means that $-N_{A,q}$ is the natural extension of the magnetic normal derivative in non smooth setting for general unbounded domains satisfying \eqref{closed}. 

We introduce then the data
\bel{data} \mathcal D_{A,q}:=\{(\tau u, N_{A,q}u):\ u\in H^1(\Omega), \textrm{ $u$ solves \eqref{eq1}}\}.\ee
Note that for  $\Omega=\omega\times \R$, $A\in W^{1,\infty}(\Omega)^3$ and assuming that $0$ is not in the spectrum of $\Delta_A + q$ with Dirichlet boundary condition, $\mathcal D_{A,q}$ corresponds, up to the sign, to the graph of the so called Dirichlet-to-Neumann map associated with \eqref{eq1}. In this paper we consider the simultaneous recovery of the magnetic field associated with $A$ and $q$ from the data $\mathcal D_{A,q}$. We consider both results with full and partial data.

\subsection{Physical motivations}
Let us first observe that, the problem addressed in this paper is linked to the so called  electrical impedance tomography (EIT in short) method  and its applications in medical imaging  and geophysical prospection (see \cite{Uh} for more detail). The statement of the present inverse problem in an unbounded closed waveguide can be addressed in the context of problems of transmission to long distance or transmission through particular structures, with important ratio length-to-diameter, such as nanostructures. Here the goal of  the inverse  problem can be described as the unique recovery of an  electromagnetic impurity  perturbing the guided propagation (see  \cite{CL,KBF}). Let us also mention that in this paper we consider general closed waveguides, only subjected to condition \eqref{closed}, that have not necessary a cylindrical shape comparing to other related works like \cite{CKS2,CKS3,Ki4}. This means that we can consider our inverse problem in closed waveguides with different types of geometrical
deformations, including bends and twisting, which can be used in several context for improving the propagation of signals (see for instance \cite{Sr}). 
 
\subsection{State of the art}
We recall that the Calder\'on problem, addressed first in  \cite{Ca}, has attracted many attention over the last decades (see for instance \cite{Ch,Uh} for an overview of several aspects of this problem). The first positive answer to this problem  in dimension $n \geq 3$ has been addressed by Sylvester and Uhlmann in \cite{SU}. Here the authors introduced the so called complex geometric optics (CGO in short) solutions which remain one of the most important tools for the study of this problem.
This last result has been extended in several  way. For instance, we can mention the problem stated with partial data by \cite{BU}   and improved by   \cite{KSU}. One of the first results about the recovery, modulo gauge invariance, of electromagnetic potentials has been addressed in \cite{Suu} where the author proved the determination of magnetic field associated with magnetic potentials $A$ lying in $ W^{2,\infty}$ by assuming that the magnetic field is sufficiently small. The smallness assumption of \cite{Suu} was removed by \cite{NSU} for smooth coefficients. Since then, \cite{T} extends this result to magnetic potentials lying in $\mathcal C^1$ and \cite{Sa1} extends it to magnetic potentials lying in a Dini class. To our best knowledge, the result with the weakest regularity assumption so far, for general bounded domain, is the one of \cite{KU} where the authors have considered bounded electromagnetic potentials. More recently, in the specific case of a ball in $\R^3$, \cite{H} proved the recovery of unbounded magnetic potentials. Concerning results with partial data associated with this last problem, we mention the work of \cite{Chu,FKSU} and concerning the stability issue, without being exhaustive, we refer to \cite{B,CDR1,CDR2,CP,Pot2,Pot1,Tz}. We mention also the work of \cite{CK,HK,Ki3} related to problems for hyperbolic and parabolic equations treated with an approach similar to the one considered for elliptic equations.

Note that all the above mentioned results have been stated in a bounded domain. Only a small number of articles studied such  inverse boundary value problems in an unbounded domain. In \cite{LU}, the authors combined unique continuation results with  CGO solutions and a Carleman estimate borrowed from \cite{BU} in order to prove the unique recovery of compactly supported electric potentials of a Schr\"odinger operator in a slab  from  partial boundary measurements. This last result has been extended to magnetic Schr\"odinger operators by \cite{KLU} and the stability issue has been addressed by \cite{CM}. We refer also to \cite{Ik,L1,L2,SW,Y} for other related inverse problems stated in a slab. In \cite{CKS2,CKS3}, the authors considered the stable recovery  of coefficients periodic along the axis of an infinite cylindrical domain. More recently, \cite{Ki4} considered, for what seems to be the first time, the recovery of non-compactly supported and non-periodic electric potentials appearing in an infinite cylindrical domain. The results of \cite{Ki4} include also an extension of the work of \cite{LU} to the recovery of non-compactly supported coefficients in a slab.
We mention also the work \cite{BKS,BKS1, CS, KKS,Ki1,  KPS1, KPS2} treating the determination of  coefficients appearing in different PDEs on an infinite cylindrical domain from boundary measurements.

\subsection{Statement of the main results}

Let us recall that there is an obstruction to the simultaneous recovery of $A$, $q$ from the data $\mathcal D_{A,q}$ given by gauge invariance. More precisely according to \cite[Lemma 3.1]{KU}, which is stated for bounded domains but whose arguments can be extended without any difficulty to unbounded domains satisfying \eqref{closed}, the data $\mathcal D_{A,q}$ satisfies the following gauge invariance.
\bel{gauge} \mathcal D_{A+\nabla\phi,q}=\mathcal D_{A,q},\quad \phi\in\{ h_{|\Omega}: \  h\in W^{1,\infty}_{loc}(\R^3:\R),\ \nabla_xh\in L^\infty(\R^3)^3,\ h_{|\R^3\setminus\Omega}=0\}.\ee
 Taking into account this obstruction, for $A=(a_1,a_2,a_3)$,  we consider the recovery of the magnetic field corresponding to the 2-form valued distribution $dA$ defined by
$$dA:=\underset{1\leq j<k\leq 3}{\sum} (\partial_{x_j}a_k-\partial_{x_k}a_j)dx_j\wedge dx_k$$
and $q$. Assuming that $\Omega$ is simply connected and with some suitable regularity assumptions (see for instance Section 4.2), one can check that this result is equivalent  to the recovery of the electromagnetic potential modulo gauge invariance.

 This paper contains three main results. In the first  main result, stated in Theorem \ref{t1}, we consider the unique determination of electromagnetic potentials with low regularity    from the full data $\mathcal D_{A,q}$. In our second main result stated in Theorem \ref{c1}, we prove, for electromagnetic potentials known on the neighborhood of the boundary outside a compact set, that  measurements restricted to a bounded subset of $\pd\Omega$ can also recover uniquely the magnetic field and the electric potential. Finally, in our last result stated in  Theorem \ref{t6}, we give a partial data result by proving the unique recovery of a magnetic field and an electric potential associated with general  class of electromagnetic potentials from restriction of the data $\mathcal D_{A,q}$.

In our first main result we consider general class of bounded electromagnetic potentials and a general closed waveguide. This result can be stated as follows.

\begin{theorem}
\label{t1} 
Let $\Omega$ be an unbounded domain satisfying \eqref{closed},  let $A_1,A_2\in L^\infty(\Omega)^3\cap L^2(\Omega)^3$  be such that $A_1-A_2\in L^1(\Omega)^3$ and let   $q_1,q_2\in L^\infty(\Omega;\mathbb C)$. Then the condition
\bel{t1a} 
\mathcal D_{A_1,q_1}=\mathcal D_{A_2,q_2}
\ee
implies $dA_1=dA_2$. Moreover, assuming that $q_1-q_2\in L^2(\Omega;\mathbb C)$, \eqref{t1a} implies  $q_1=q_2$.
\end{theorem}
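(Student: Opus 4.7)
The strategy is the classical Sylvester--Uhlmann CGO scheme, adapted to the unbounded closed-waveguide geometry by a Carleman estimate tailored to the ambient cylinder $\omega\times\R$. First, I would exploit $\mathcal D_{A_1,q_1}=\mathcal D_{A_2,q_2}$ to derive an integral identity: any $u_1\in H^1(\Omega)$ solving $\Delta_{A_1}u_1+q_1u_1=0$ has a companion $u_2\in H^1(\Omega)$ solving $\Delta_{A_2}u_2+q_2u_2=0$ with identical $\tau$-- and $N_{A,q}$--traces. Pairing the two weak formulations with a solution $v\in H^1(\Omega)$ of the conjugate equation $\Delta_{-A_2}v+\overline{q_2}v=0$ and subtracting cancels the boundary contributions, producing an identity of the schematic form
\beas
\int_\Omega\left(i(A_1-A_2)\cdot(u_1\overline{\nabla v}-\overline{v}\,\nabla u_1)+(|A_1|^2-|A_2|^2+q_1-q_2)u_1\overline{v}\right)dx=0.
\eeas
The assumptions $A_1-A_2\in L^1(\Omega)^3\cap L^\infty(\Omega)^3$ together with the decay of the CGO amplitudes to be built will guarantee absolute convergence of every term.

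The technical heart of the proof is the construction, for each $\zeta\in\C^3$ with $\zeta\cdot\zeta=0$ and $|\zeta|$ large, of a CGO solution $u_1=e^{\zeta\cdot x}(a+r_\zeta)$, where the WKB amplitude $a$ solves the transport equation $2\zeta\cdot\nabla a+2i(\zeta\cdot A_1)a=0$ and $r_\zeta$ is a corrector of size $\|r_\zeta\|_{L^2(\Omega)}=o(1)$ as $|\zeta|\to\infty$. Since $\Omega\subset\omega\times\R$ is not the whole space, the $\R^3$-Fourier construction of \cite{SU,KU} is unavailable; I would instead prove, on the enlarged cylinder $\omega\times\R$ and for $H^1_0$ functions, a Carleman estimate with linear weight $\psi(x)=\alpha\cdot x$ ($\alpha$ a suitable real direction) of the schematic form
\beas
s\|e^{s\psi}w\|_{L^2(\omega\times\R)}\leq C\|e^{s\psi}(\Delta+B\cdot\nabla+c)w\|_{L^2(\omega\times\R)}
\eeas
for $s$ large, uniformly in $B\in L^\infty$ and $c\in L^\infty$. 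Such an estimate is accessible via expansion of $w$ along the Dirichlet eigenbasis of $-\Delta_{x'}$ on $\omega$ combined with a Fourier transform in the axial variable $x_3$, and its gain of one power of $s$ is precisely what is needed to absorb the bounded first-order term $2iA_1\cdot\nabla$ and, by duality, to produce the desired $L^2$ corrector.

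With these CGO solutions at hand, I would insert into the integral identity a $u_1$ with phase $\zeta_1$ and a dual solution $v$ with phase $\zeta_2$, chosen so that $\zeta_1+\overline{\zeta_2}=-i\xi$ for an arbitrary prescribed $\xi\in\R^3$. Sending $|\zeta_j|\to\infty$ isolates, at leading order, the Fourier transform of a linear functional of $A_1-A_2$ at $\xi$; the hypothesis $A_1-A_2\in L^1(\Omega)^3$, extended by zero to $\R^3$, legitimizes the appeal to the Fourier transform. Varying $\xi$ and the pair of complex directions yields the vanishing of all gauge-invariant components, i.e., $dA_1=dA_2$. The gauge invariance \eqref{gauge} then supplies a scalar function $\varphi$, recovered from $d(A_1-A_2)=0$ together with $A_1-A_2\in L^1\cap L^\infty$, that allows me to replace $A_2$ by $A_1$ in the identity, collapsing it to $\int_\Omega(q_1-q_2)u_1\overline{v}\,dx=0$ for all pairs of CGO solutions; a final CGO argument, now using $q_1-q_2\in L^2(\Omega)$ in the same asymptotic regime, forces $q_1=q_2$.

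The main obstacle is the CGO construction itself. The combination of an unbounded longitudinal direction (which precludes the direct global Fourier construction), merely $L^\infty$ magnetic potentials (so one cannot invert $\zeta\cdot\nabla$ up to smoothing errors via pseudodifferential calculus), and purely integrability-type assumptions on the differences rather than compact support, forces a careful joint design of the Carleman estimate, the function spaces controlling $r_\zeta$, and the amplitude $a$, so that all pairings in the integral identity remain absolutely convergent while the $|\zeta|\to\infty$ asymptotics still recover the Fourier data of $A_1-A_2$ and $q_1-q_2$. This is the reason for the precise integrability hypotheses appearing in the statement.
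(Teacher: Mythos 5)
The broad scheme you propose (data identity, CGO solutions, Carleman estimate on the ambient cylinder, Fourier inversion, gauge removal) matches the paper's roadmap, but there is a genuine gap at the exact point you flag as the main obstacle, and the remedy you sketch does not close it.

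A CGO solution of the form $u=e^{\zeta\cdot x}(a+r_\zeta)$ with $\zeta\cdot\zeta=0$ and bounded amplitude $a$ has modulus $|e^{\zeta\cdot x}||a|$, which on $\omega\times\R$ is merely bounded in $x_3$ even when $\Pre{\zeta}\perp e_3$. A bounded, nondecaying function on an infinite cylinder is not in $L^2(\omega\times\R)$, so this $u$ does not belong to $H^1(\Omega)$ and cannot be fed into $\mathcal D_{A,q}$. The integrability hypotheses $A_1-A_2\in L^1$, $q_1-q_2\in L^2$ bear on the coefficients, not on the CGO solutions, and cannot rescue this: the trouble is that $u_1\overline{u_2}$ with two bounded amplitudes is merely $O(1)$ in $x_3$, so even the integral identity you write is formal without extra structure in the solutions. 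The paper handles this by building CGO solutions of the form $e^{\pm\rho\theta\cdot x'}\bigl(\psi(\rho^{-1/4}x_3)\,b_{j,\rho}\,e^{i\rho\eta\cdot x}\cdots+w_{j,\rho}\bigr)$, with the linear growth confined to the bounded cross-section $x'$ and a spreading cutoff $\psi(\rho^{-1/4}x_3)$ inserted in the axial variable. The cutoff simultaneously enforces $H^1(\Omega_1)$ membership and spreads slowly enough (rate $\rho^{1/4}$) that the $x_3$-Fourier data of $A_1-A_2$ and $q_1-q_2$ are recovered in the $\rho\to\infty$ limit. Omitting such a mechanism is not a detail; it is the central new device needed for the infinite waveguide.

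Two further points. First, your proposed Carleman estimate with a linear weight $\alpha\cdot x$ cannot be uniform over $B\in L^\infty$ as you claim: with a linear phase the conjugated operator gains one power of $s$ only on $\|w\|_{L^2}$, not on $\|\nabla w\|_{L^2}$, so the term $B\cdot\nabla w$ cannot be absorbed. The paper deliberately replaces the linear weight by the convexified weight $\pm\rho\theta\cdot x'-s(x'\cdot\theta)^2/2$ with a second large parameter $s$, which produces the extra $\sqrt s$ factor in front of $\|\nabla v\|^2$ that absorbs the $L^\infty$ first-order term. Second, for merely $L^\infty$ magnetic potentials the source feeding the remainder equation involves $\mathrm{div}(A)\in H^{-1}$; the paper therefore proves a Carleman estimate in negative-order Sobolev norms ($\rho^{-1}\|v\|_{H^1_\rho}\leq C\|Pv\|_{H^{-1}_\rho}$) and correspondingly replaces the exact transport solution by a mollified amplitude $b_{j,\rho}=e^{\Phi_{j,\rho}}$ built from $A_{j,\rho}=\chi_\rho*A_j$. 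Your duality remark gestures at this, but the $L^2$-level estimate you state would not suffice.
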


Let us remark that Theorem \ref{t1} is stated with boundary measurements in all parts of the unbounded boundary $\pd\Omega$.  Despite the general setting of this problem,  it may be difficult for several applications, like for transmission to long distance,  to have access to such  data. In order to make the measurements more relevant  for some potential applications, we need to consider data restricted to a bounded portion of $\partial\Omega$.
This will be the goal of our second result where we extend  Theorem \ref{t1} to recovery of coefficients from measurements  restricted to  bounded portions of $\pd\Omega$. 
From now on, we
 assume that $\Omega$ is a domain with Lipschitz boundary. For all $s\in\left[0,\frac{1}{2}\right]$, we denote by 
$H^{s}_{loc}(\pd\Omega)$ the set of $f\in L^2_{loc}(\pd\Omega)$  such that for any $\chi\in \mathcal C^\infty_0(\R^3)$, $\chi f \in H^{s}(\pd\Omega)$. For any $u\in H^1(\Omega)$, we can define  $\tau_0 u=u_{|\pd\Omega}$ as an element of $H^{\frac{1}{2}}_{loc}(\pd\Omega)$. In the same way, for $U$ a closed (resp. open) subset of $\pd\Omega$  and for $u\in H^1(\Omega)$ solving $\Delta_Au+qu=0$, with $A\in L^\infty(\Omega)^3$ and $q\in L^\infty(\Omega)$, we denote by  $N_{A,q}u_{|U}$ the restriction of $N_{A,q}u$ to the subspace
$$\{\tau g:\ g\in H^1(\Omega),\ \textrm{supp}(\tau_0g)\subset U\}$$
of $\frac{H^1(\Omega)}{H^1_0(\Omega)}$. Note that here $N_{A,q}u_{|U}$ is the natural extension of the restriction, up to the sign, of the magnetic normal derivative of $u$ to the set $U$.
For $r>0$ and $S_r=\pd\Omega\cap(\overline{\omega}\times[-r,r]) $, we can consider the restriction $\mathcal D_{A,q, r}$ of the data $\mathcal D_{A,q}$ given by
\bel{pada}\mathcal D_{A,q, r}:=\{(\tau u, N_{A,q}u_{|S_r}):\ u\in H^1(\Omega), \textrm{ $u$ solves \eqref{eq1}},\ \textrm{supp}(\tau_0 u)\subset S_r\}.\ee
In the spirit of \cite[Corollary 1.3]{Ki4}, fixing $\delta\in(0,r/2)$, we will apply Theorem \ref{t1} in order to prove the recovery of coefficients known on a neighborhood of the boundary outside $\Omega\cap (\omega\times(\delta-r,r-\delta))$ from the data $\mathcal D_{A,q, r}$. For this purpose we need the following assumption on $\Omega$ and the admissible coefficients.\\
\textbf{Assumption 1:} For $j=1,2$, and for any $F\in L^2(\Omega)$ the equations  $\Delta_{A_j}u_j+\overline{q_j}u_j=F$ and $\Delta_{A_j}u_j+q_ju_j=F$  admit respectively a solution $u_j\in H^1_0(\Omega)$.

We mention that Assumptions 1  will  be fulfilled if for instance $\Omega=\omega_1\times \R$, with $\omega_1$ a bounded open subset of $\R^2$ with Lipschitz boundary, and if $0$ is not in the spectrum of the operators $\Delta_{A_j}+q_j$ and $\Delta_{A_j}+\overline{q_j}$, $j=1,2$, with Dirichlet boundary condition.

Let $\textbf{n}$ be the outward unit normal vector of $\pd\Omega$.\footnote{Since $\Omega$ is only subjected to the condition $\Omega\subset\Omega_1$ we may have $\Omega\neq\Omega_1$ this is the reason why we use a different notation for the outward unit normal vector of $\Omega_1$ and $\Omega$.
} Since $\Omega$ is only subjected to the condition $\Omega\subset\Omega_1$ we may have $\Omega\neq\Omega_1$ this is why we use a different notation for the outward unit normal vector of $\Omega_1$ and $\Omega$.
 Before we state our result, let us also recall that for any $A\in L^\infty(\Omega)^3$ satisfying $\textrm{div}(A)\in L^\infty(\Omega)$, we can define the trace map $A\cdot \textbf{n}$ as the unique element of $$\mathcal B\left(\frac{H^1(\Omega)}{H^1_0(\Omega)}; \left(\frac{H^1(\Omega)}{H^1_0(\Omega)}\right)'\right)$$
defined by
\bel{An}\left\langle (A\cdot \textbf{n})\tau g,\tau h\right\rangle_{\left(\frac{H^1(\Omega)}{H^1_0(\Omega)}\right)',\frac{H^1(\Omega)}{H^1_0(\Omega)}}:=\int_{\Omega}\textrm{div}(A)h\overline{g}dx+\int_{\Omega}A\cdot\nabla h\overline{g}dx+\int_{\Omega} h(A\cdot\overline{\nabla g})dx,\ g,h\in H^1(\Omega).\ee
Again, by a density argument, one can easily check the validity of this definition  by noticing that the right hand side of the identity vanishes as soon as $g\in H^1_0(\Omega)$ or $h\in H^1_0(\Omega)$. Here we use again the fact that, for $u\in H^1(\Omega)$ and $\phi\in\mathcal C^\infty_0(\Omega)$, we have $u\phi\in W^{1,1}_0(\Omega)$. 

Assuming that Assumption 1 is fulfilled, we state our second main result as follows.

\begin{theorem}\label{c1}
Let $\Omega$ be a connected open set with Lipschitz boundary satisfying \eqref{closed}. For $j=1,2$, let $A_j\in L^\infty(\Omega)^3\cap L^2(\Omega)^3$, div$(A_j)\in L^\infty(\Omega)$,  $q_j\in L^\infty(\Omega;\mathbb C)$, $A_1-A_2\in L^1(\Omega)^3$. In addition, let Assumption 1 be fulfilled and, for $A_j\cdot  \textbf{n}$, $j=1,2$,  defined by \eqref{An} with $A=A_j$, let the condition
\bel{c1d}A_1\cdot \textbf{n} =A_2\cdot \textbf{n}\ee
be fulfilled. Assume also that there exist $\delta\in(0,r/2)$ and two open connected set $\Omega_{\pm}\subset\Omega$ with  Lipschitz boundary such that 
\bel{c1a}\pd\Omega\cap (\overline{\omega}\times(-\infty,-r+\delta])\subset \pd\Omega_-,\quad \pd\Omega\cap (\overline{\omega}\times[r-\delta,+\infty))\subset \pd\Omega_+,\ee
\bel{c1b} A_1(x)=A_2(x),\quad q_1(x)=q_2(x),\quad x\in\Omega_-\cup\Omega_+.\ee
Then, the condition 
\bel{c1c} \mathcal D_{A_1,q_1, r}=\mathcal D_{A_2,q_2, r}\ee
implies $dA_1=dA_2$. Moreover, assuming that $q_1-q_2\in L^2(\Omega;\mathbb C)$, \eqref{c1c} implies  $q_1=q_2$.
\end{theorem}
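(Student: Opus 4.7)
The strategy is to reduce Theorem \ref{c1} to Theorem \ref{t1} by showing that, under the hypotheses, the partial data equality \eqref{c1c} together with the coefficient agreement \eqref{c1b} implies the full data equality $\mathcal D_{A_1,q_1}=\mathcal D_{A_2,q_2}$, after which $dA_1=dA_2$ (and $q_1=q_2$ when $q_1-q_2\in L^2$) follows immediately from Theorem \ref{t1}. I would first associate, to an arbitrary solution $u_1\in H^1(\Omega)$ of $\Delta_{A_1}u_1+q_1u_1=0$, a companion solution $u_2\in H^1(\Omega)$ of $\Delta_{A_2}u_2+q_2u_2=0$ with $\tau u_2=\tau u_1$, as follows. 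Setting
$$f:=2i(A_2-A_1)\cdot\nabla u_1+i\,\textrm{div}(A_2-A_1)u_1+(|A_1|^2-|A_2|^2)u_1+(q_2-q_1)u_1,$$
one observes that $f=(\Delta_{A_2}+q_2)u_1$ lies in $L^2(\Omega)$ thanks to the $L^\infty$ hypotheses on $A_j$ and $\textrm{div}(A_j)$; moreover, by \eqref{c1b} its support is contained in $\Omega\setminus(\Omega_-\cup\Omega_+)$, which under the geometric condition \eqref{c1a} is a bounded subset of $\Omega\cap(\overline\omega\times(-r+\delta,r-\delta))$. Assumption 1 then yields $v\in H^1_0(\Omega)$ solving $\Delta_{A_2}v+q_2v=f$, and $u_2:=u_1-v$ is the desired companion.

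The core of the proof is to show that this construction also preserves the Neumann data, namely $N_{A_1,q_1}u_1=N_{A_2,q_2}u_2$ in $\left(\frac{H^1(\Omega)}{H^1_0(\Omega)}\right)'$. I would decompose an arbitrary test function $g\in H^1(\Omega)$ using a cutoff $\chi\in\mathcal C^\infty(\R)$ in $x_3$ with $\chi\equiv 1$ on $[-r+\delta,r-\delta]$ and $\supp(\chi)\subset(-r,r)$, chosen so that $\supp(\nabla\chi)$ lies entirely in the region where $A_1=A_2$ and $q_1=q_2$. For the piece $\chi g$, whose trace is supported in $S_r$, one constructs via Assumption 1 an auxiliary solution $\tilde u_1\in H^1(\Omega)$ of the first equation with trace $\chi\tau u_1$ (still supported in $S_r$) and applies the partial data equality \eqref{c1c} to obtain a companion $\tilde u_2$ satisfying $\langle N_{A_1,q_1}\tilde u_1-N_{A_2,q_2}\tilde u_2,\tau(\chi g)\rangle=0$; the differences $u_j-\tilde u_j$ have traces supported in the coincidence region, which allows gluing them back to $u_j$. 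For the piece $(1-\chi)g$, whose trace sits entirely in the region of coefficient agreement, the desired cancellation follows from a direct integration by parts combined with hypothesis \eqref{c1d}, which identifies the magnetic boundary contributions $i(A_j\cdot\mathbf n)u$ on the compact portion of $\pd\Omega$.

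The main obstacle is executing the above cutoff/gluing argument rigorously. The transition zone of $\chi$ must be placed inside $\Omega_-\cup\Omega_+$ so that the commutator of $\chi$ with $\Delta_{A_j}+q_j$, which carries $\nabla\chi$ and $\Delta\chi$ factors, couples with the \emph{same} coefficients for $j=1,2$; producing the auxiliary solutions $\tilde u_j$ of the first equation with prescribed trace $\chi\tau u_1$ and correctly absorbed commutator source requires a careful well-posedness argument based on Assumption 1. The condition \eqref{c1d}, which at first sight looks technical, plays an essential role in the second piece: without it, the magnetic contribution $i(A_j\cdot\mathbf n)u$ to the normal derivative would not match on the compact portion of $\pd\Omega$ where $A_1\neq A_2$, and the two pieces could not be recombined into the desired identity. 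Once these points are handled, the inclusion $\mathcal D_{A_1,q_1}\subset\mathcal D_{A_2,q_2}$ follows; the reverse inclusion is symmetric, and an appeal to Theorem \ref{t1} concludes the proof.
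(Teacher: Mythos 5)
The paper does not attempt to show full data equality $\mathcal D_{A_1,q_1}=\mathcal D_{A_2,q_2}$ at all. Its proof (Section~5) instead establishes the density result Proposition~\ref{l5}: the space $Q_{A_1,q_1,r}$ of $H^1$-solutions whose Dirichlet traces are supported in $S_r$ is dense in the space $Q_{A_1,q_1}$ of all $H^1$-solutions, \emph{for the $L^2(\Omega\setminus(\Omega_-\cup\Omega_+))$ topology}. It then observes that the integral identity
$$2i\int_{\Omega\setminus(\Omega_-\cup\Omega_+)}(A\cdot\nabla u_1)\overline{u_2}\,dx+\int_{\Omega\setminus(\Omega_-\cup\Omega_+)}[\tilde q+i\,\mathrm{div}(A)]u_1\overline{u_2}\,dx=0,$$
obtained from \eqref{c1c}, \eqref{c1d}, \eqref{c1b}, involves only $L^2$-quantities over $\Omega\setminus(\Omega_-\cup\Omega_+)$, so density lets one pass to general $u_j$ (in particular to the CGO solutions of Section~3) and then repeat the Fourier argument of Section~4. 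The only role of \eqref{c1d} in the paper is to remove a potential boundary term in the integration by parts $\int u_1(A\cdot\overline{\nabla u_2})=-\int(A\cdot\nabla u_1)\overline{u_2}-\int\mathrm{div}(A)u_1\overline{u_2}$.

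Your proposal takes a genuinely different route and, as written, has gaps. The cleanest concrete one is the claim implicit in your companion construction: $u_2:=u_1-v$ with $v\in H^1_0(\Omega)$ solving $\Delta_{A_2}v+q_2v=f$ does \emph{not} visibly satisfy $N_{A_1,q_1}u_1=N_{A_2,q_2}u_2$, because the weak identity $-\int(\nabla+iA_2)v\cdot\overline{(\nabla+iA_2)g}+\int q_2v\overline{g}=\int f\overline{g}$ is valid only for $g\in H^1_0(\Omega)$; for general $g\in H^1(\Omega)$ it carries a Neumann-trace contribution $\langle\partial_\nu v,\tau_0 g\rangle$, and $\partial_\nu v$ is not controlled by the hypotheses. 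That correction term is precisely the object the partial-data hypothesis must be used to kill. Your cutoff/gluing scheme is meant to do this, but it is left at the sketch level and has two further issues: (i) the auxiliary solution $\tilde u_2$ coming from the partial-data equality $\mathcal D_{A_1,q_1,r}=\mathcal D_{A_2,q_2,r}$ has no a priori relation to the companion $u_2$ you constructed, so ``gluing them back'' does not obviously assemble into the identity $\langle N_{A_1,q_1}u_1-N_{A_2,q_2}u_2,\tau g\rangle=0$; and (ii) for the piece $(1-\chi)g$, the fact that $A_1=A_2$, $q_1=q_2$ near the relevant boundary portion does \emph{not} by itself imply the Neumann data of $u_1$ and $u_2$ agree there, since $u_1,u_2$ are global solutions and can have arbitrary Cauchy data on the interior interface with the coincidence region. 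You also assert that $\Omega\setminus(\Omega_-\cup\Omega_+)$ is bounded, which does not follow from \eqref{c1a}: that hypothesis constrains only the boundary, not the interior of $\Omega$; the paper's density argument is designed precisely to work without such boundedness. In short, the full-data reduction you aim for is plausible but substantially harder to make rigorous than the paper's $L^2$-density approach, which sidesteps any control of Neumann traces beyond what the weak formulation gives.
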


For our last main result we will consider the specific case where $\Omega=\omega\times\R$. This time we want to consider the recovery of the coefficients not from full boundary measurements but from partial boundary measurements without assuming the knowledge of the coefficients close to the boundary. We remark that $\pd\Omega=  \pd \omega\times\R $ and that the outward unit normal vector $\nu$  to $\pd\Omega$ takes the form
$$ \nu(x',x_3)=(\nu'(x'),0)^T,\ x=(x',x_3)\in\pd\Omega, $$
with $\nu'$ the outward unit normal vector of $\pd \omega$. In light of this identity, from now on, we  denote  by $\nu$  both the exterior unit vectors normal to $\pd  \omega$ and to $\pd \omega\times\R$.
We fix $\theta_0 \in \mathbb{S}^1 :=\{ y\in\R^2;\ \abs{y}=1\}$ and  we introduce the $\theta_0$-illuminated (resp., $\theta_0$-shadowed) face of $\pd \omega$, defined by
$$\pd \omega_{\theta_0}^- := \{ x \in \pd \omega;\ \theta_0 \cdot \nu(x) \leq 0 \}\ (\mbox{resp.},\ \pd \omega_{\theta_0}^+= \{x \in \pd \omega;\ \theta_0 \cdot \nu(x) \geq 0\}).$$
From now on, we denote by 
$x \cdot y := \sum_{j=1}^k x_j y_j$
the Euclidian scalar product of any two vectors $x:=(x_1,\ldots,x_k)^T$ and $y:=(y_1,\ldots,y_k)^T$ of $\mathbb C^k$.  We fix $V$ a portion of $\pd\Omega$ taking the form $V:=V'\times \R$, where $V'$ 
is an arbitrary open neighborhood of  $\pd \omega_{\theta_0}^-$ in $\partial\omega$. We introduce also the set of data
$$\mathcal D_{A,q, V}=\{(\tau u, N_{A,q}u_{|V}):\ u\in H^1(\Omega), \textrm{ $u$ solves \eqref{eq1}}\}.$$
Then we can state our last main result as follows.

\begin{theorem}
\label{t6} 
Let $\Omega=\omega\times\R$ and, for $j=1,2$, let $A_j\in L^\infty(\Omega)^3\cap L^2(\Omega)^3$,  div$(A_j)\in L^\infty(\Omega)$,  $q_j\in L^\infty(\Omega;\mathbb C)$, $A_1-A_2\in L^1(\Omega)^3$.  Let also  $A_1$ and $A_2$  satisfy \eqref{c1d}. Then the condition
\bel{t6a} 
\mathcal D_{A_1,q_1, V}=\mathcal D_{A_2,q_2,V}
\ee
implies $dA_1=dA_2$. Moreover,   assuming that  $q_1-q_2\in L^1(\Omega;\mathbb C)$, \eqref{t1a} implies also that   $q_1=q_2$.
\end{theorem}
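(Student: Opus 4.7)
My approach combines the Bukhgeim-Uhlmann/Kenig-Sjöstrand-Uhlmann partial-data scheme with the unbounded-cylinder CGO framework of \cite{Ki4}, adapted to the magnetic Schr\"odinger operator. The starting point is an integral identity: from \eqref{t6a}, the hypothesis $A_1\cdot\textbf{n}=A_2\cdot\textbf{n}$, the gauge invariance \eqref{gauge}, and integration by parts, one derives, for every $u_1\in H^1(\Omega)$ solving $\Delta_{A_1}u_1+q_1u_1=0$ and every $u_2\in H^1(\Omega)$ solving the formally adjoint equation $\Delta_{A_2}u_2+\overline{q_2}u_2=0$,
$$\int_\Omega\!\Bigl[2i(A_1-A_2)\cdot\nabla u_1+i\,\mathrm{div}(A_1-A_2)u_1+(|A_2|^2-|A_1|^2)u_1+(q_1-q_2)u_1\Bigr]\overline{u_2}\,dx=\mathcal B(u_1,u_2),$$
where the boundary term $\mathcal B(u_1,u_2)$ is supported in $(\partial\omega\setminus V')\times\R\subset\partial\omega_{\theta_0}^+\times\R$, i.e.\ on the shadowed side of $\partial\Omega$.

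Embedding $\theta_0\in\mathbb{S}^1$ into $\R^2\times\{0\}\subset\R^3$ and taking $\xi\in\R^3$ orthogonal to $\theta_0$ (up to corrections of order $\tau^{-1}$), I next construct CGO solutions of the form
$$u_1(x)=e^{-\tau\theta_0\cdot x'+ix\cdot\xi}\bigl(e^{\Phi_1(x)}+r_{1,\tau}(x)\bigr),\qquad u_2(x)=e^{\tau\theta_0\cdot x'}\bigl(e^{\Phi_2(x)}+r_{2,\tau}(x)\bigr),$$
where each $\Phi_j$ solves a $\bar{\partial}$-type equation in the $x'$-plane (parametrized by $x_3$), chosen so as to absorb the magnetic drift $2iA_j\cdot\nabla$. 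The hypothesis $A_1-A_2\in L^1(\Omega)^3\cap L^2(\Omega)^3$ is used to ensure that the Cauchy-type transforms producing the $\Phi_j$ give globally bounded amplitudes on the infinite cylinder, and together with $q_1-q_2\in L^1(\Omega)$ guarantees absolute convergence of the identity. The existence of remainders $r_{j,\tau}$ with $\|r_{j,\tau}\|_{L^2(\Omega)}=O(\tau^{-1/2})$ follows from a new Carleman estimate of Bukhgeim-Uhlmann type for $\Delta_A+q$ on $\omega\times\R$ with linear weight $\pm\tau\theta_0\cdot x'$, which simultaneously delivers the quantitative boundary bound
$$|\mathcal B(u_1,u_2)|\leq\frac{C}{\tau^{1/2}}\Bigl(\int_{(\partial\omega_{\theta_0}^+\cap V')\times\R}\tau(\theta_0\cdot\nu)\,|\partial_\nu u|^2\,dS\Bigr)^{1/2},$$
which tends to $0$ as $\tau\to\infty$ because the support of $\mathcal B$ in $\partial\omega_{\theta_0}^+\times\R$ is contained in $V=V'\times\R$, where the Neumann trace is known by \eqref{t6a}.

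Passing to the limit $\tau\to\infty$ in the integral identity and unwinding the $\Phi_j$ via the standard gauge argument, the dominant terms yield the vanishing of Fourier-type transforms of $d(A_1-A_2)$ and of $q_1-q_2$ at all $\xi\in\R^3$ in an open cone around $\theta_0^\perp$. Varying $\theta_0$ in a small neighborhood in $\mathbb{S}^1$ sweeps out an open conic subset of $\R^3$, and a Paley-Wiener/analyticity extension argument in the spirit of \cite{Ki4} upgrades this to all $\xi\in\R^3$, giving $dA_1=dA_2$ and, under $q_1-q_2\in L^1$, $q_1=q_2$. \emph{The main obstacle} is the simultaneous construction of the new magnetic CGOs and of the accompanying Carleman estimate on the infinite cylinder with only partial boundary control: unlike the bounded-domain Kenig-Sjöstrand-Uhlmann theory, the Cauchy transform does not invert $\bar{\partial}$ globally on $\R^2$, so the $L^1\cap L^2$ decay of $A_1-A_2$ must be exploited carefully to produce globally bounded amplitudes $\Phi_j$, while the non-compactness of $\Omega$ in $x_3$ forces one to track the $\tau$-dependence of all error terms uniformly in that direction.
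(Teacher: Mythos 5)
Your high-level strategy, combining cylindrical CGO solutions with a Carleman estimate that transfers boundary control from the shadowed to the illuminated face, is indeed the one the paper follows, and citing \cite{Ki4} and \cite{FKSU} for the ingredients is appropriate. However, as written, the proposal has two concrete gaps that would each derail the argument.

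\emph{The CGO ansatz is not square-integrable on the infinite cylinder.} You write
$u_1(x)=e^{-\tau\theta_0\cdot x'+ix\cdot\xi}\bigl(e^{\Phi_1(x)}+r_{1,\tau}(x)\bigr)$
with $\Phi_1$ globally bounded. Since $x'\in\omega$ ranges over a bounded set, the prefactor $e^{-\tau\theta_0\cdot x'}$ is bounded above and below on $\Omega=\omega\times\R$, so $|u_1|$ is bounded below by a positive constant on all of $\omega\times\R$, which has infinite measure. Such a $u_1$ is not in $L^2(\Omega)$, hence not in $H^1(\Omega)$, hence not an admissible element of the data set $\mathcal D_{A,q,V}$. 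This is precisely the obstruction identified in the paper's Remark 3.2: one cannot transplant the bounded-domain CGOs. The paper's fix, following \cite{Ki4}, is to insert a $\rho$-dependent cutoff $\psi(\rho^{-1/4}x_3)$ in the $x_3$-direction into the principal part (see \eqref{CGO1}--\eqref{CGO2}), which makes the principal amplitude compactly supported in $x_3$ while the remainder is driven into $L^2(\R^3)$ via an $H^{-1}_\rho$ Carleman estimate. Your ansatz omits this cutoff entirely; and the accompanying phase $e^{i\rho\,\eta\cdot x}$, which is part of the complex null covector and is needed both to make the remainder equation solvable and to carry the direction in which the Fourier data is swept, also does not appear. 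Merely saying that $\xi$ is ``orthogonal to $\theta_0$ up to $O(\tau^{-1})$'' does not give $\zeta\cdot\zeta=0$ for the full linear phase.

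\emph{The boundary estimate is logically inverted.} You claim $\mathcal B$ is supported in $(\partial\omega\setminus V')\times\R$ (shadowed), and then assert that the boundary contribution vanishes ``because the support of $\mathcal B$ in $\partial\omega_{\theta_0}^+\times\R$ is contained in $V=V'\times\R$, where the Neumann trace is known.'' But $\partial\omega\setminus V'$ is \emph{disjoint} from $V'$; the shadowed face is exactly where the Neumann trace of $u=v_2-u_1$ is \emph{not} controlled by the data. The correct mechanism, which the paper executes carefully, is: (i) first show, from the partial Cauchy-data equality and the normal-trace hypothesis \eqref{c1d}, that $\partial_\nu u$ vanishes on $V$ (this is a nontrivial weak-formulation step in the paper, not merely a restatement of \eqref{t6a}); (ii) then invoke the Carleman estimate of Corollary \ref{c2}, which bounds the weighted $\partial_\nu u$-integral over the shadowed face $\partial\omega_{+,\theta}$ by the illuminated face $\partial\omega_{-,\theta}$ plus an interior term; (iii) the illuminated contribution drops out because $\partial_\nu u=0$ there, leaving only the interior term, which is of order $\rho\,\|A_2-A_{2,\rho}\|_{L^2}$ and hence is killed after dividing by $\rho$. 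Your displayed estimate (with $\partial\omega_{\theta_0}^+\cap V'$ on the right) does not track this transfer of control.

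Two smaller points: the remainder bound $\|r_{j,\tau}\|_{L^2}=O(\tau^{-1/2})$ is overclaimed — with only $A_j\in L^\infty\cap L^2$ the paper gets the rate $C(\|A_j-A_{j,\rho}\|_{L^2}+\rho^{-1/8})$, which is $o(1)$ but with no universal power; and the Carleman estimate cannot be proved directly with the linear weight $\pm\tau\theta_0\cdot x'$ in the presence of a bounded magnetic drift — the paper uses a convexified weight $\pm\rho\,\theta\cdot x'-s(\theta\cdot x')^2/2$ to absorb the first-order perturbation, and only afterwards descends to the linear-weight version in Corollary \ref{c2}. These are repairable, but the missing $x_3$-cutoff and the misattributed support of the boundary term are the genuine gaps.
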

\subsection{Comments about our results}
To the best of our knowledge Theorem \ref{t1} is the first result of recovery of a magnetic field and an electric potential in an unbounded domain with such a general setting. This point can be seen  through four different aspects of the theorem. First, Theorem \ref{t1} is stated in a general unbounded domain subject only to condition \eqref{closed}. This makes an important difference with other related results which, to our best knowledge, have all been stated in specific unbounded domains like a slab, the half space or a cylindrical domain (see \cite{KLU,LU,CKS2,CKS3}). In particular, Theorem \ref{t1} holds true with domains having different types of geometrical deformations like bends or twisting, which are frequently used in problems of  transmission for improving the propagation. Second, to the best of our knowledge,  in contrast to all other results stated for elliptic equations in an unbounded domain, Theorem \ref{t1} requires no assumptions about the spectrum of the magnetic Schr\"odinger operator associated with the electromagnetic potential under consideration. Usually such conditions make some restrictions on the class of coefficients under consideration, here we avoid such constraints. Third, we prove, for what seems to be the first time, the recovery of electromagnetic potentials that are neither compactly supported nor periodic. Actually we consider a class of electromagnetic potentials admitting various type of behavior outside a compact set (roughly speaking we consider magnetic potentials lying in $L^1(\Omega)^3$ and electric potentials lying in $L^2(\Omega)$).  Fourth, Theorem \ref{t1} seems to be the first result stated for an unbounded domain with electromagnetic potentials having regularity comparable to \cite{KU}, where the recovery of electromagnetic potentials has been stated with the weakest regularity condition so far for general bounded domains. 

The main tools in our analysis are CGO solutions suitably designed for unbounded domains satisfying \eqref{closed}. Here in contrast to  \cite{CKS2,CKS3,KLU,LU} we do not restrict our analysis to  compactly supported or periodic coefficients where, by mean of unique continuation or Floquet decomposition,  one can transform the problem stated  on an unbounded domain into a problem on a bounded domain. Like \cite{Ki4}, we introduce a new class of CGO solutions designed for infinite cylindrical domains.  The difficulties in the construction of such solutions are coming both from the fact that we consider magnetic potentials that are not compactly supported and the fact that we need to preserve the square integrability of the CGO solutions, which is not guarantied by the usual CGO solutions in unbounded domains. In addition, like in  \cite{KU}, we  build CGO solutions designed for bounded magnetic potentials. The construction of our CGO solutions requires   Carleman estimates in negative order Sobolev space that we prove by extending some results, similar to those of \cite{FKSU,ST}, to infinite cylindrical domains.

Let us observe that the construction of  CGO solutions satisfying the square integrability property works only for  domains contained into an infinite cylinder. For instance, we can not apply our construction to domains like slab or half space. However, in a similar way to \cite[Corollary 1.4]{Ki4}, applying Theorem \ref{t1} and  \ref{c1}, one can prove that the result of \cite{KLU} can be extended to electromagnetic potentials supported in infinite cylinder.

In this paper we consider electric potentials $q$ that can be complex valued but we consider magnetic potentials $A$ that take value in $\R^3$. Like in \cite{KLU,KU}, we could state our result with  magnetic potentials taking value in $\mathbb C^3$, but for simplicity we restrict our analysis  to real valued magnetic potentials.

\subsection{Outline}
This paper is organized as follows. In Section 2, we derive some Carleman estimates that will be useful at the same time for building the CGO solutions and restricting the data  in Theorem \ref{t6}.   In Section 3, we use the Carleman estimates in order to build our CGO solutions. Combining all these tools, in Section 4, 5, 6  we prove respectively Theorem \ref{t1}, Theorem \ref{c1} and Theorem \ref{t6}. Finally, in Section 7 we explain how our result can be extended to higher dimension.

\section{Carleman estimates}
From now on, we fix $\Omega_1=\omega\times\R$. We associate to every point $x \in \Omega_1$ the coordinates $x=(x',x_3)$, where $x_3 \in \R$ and $x':= (x_1,x_2) \in \omega$. In a similar way to the discussion before the statement of Theorem \ref{t6}, we  denote  by $\nu$  both the exterior unit vectors normal to $\pd  \omega$ and to $\pd\Omega_1$. The goal of this section is  to establish two  Carleman estimates for the magnetic Laplace operator in the unbounded cylindrical domain $\Omega_1$. 
We start with a  Carleman estimate which will be our first main tool. Then, using this Carleman estimate we will derive a Carleman estimate in negative order Sobolev space.

\subsection{General Carleman estimate}

In order to prove our Carleman estimates we introduce first a weight function depending on two parameters $s,\rho\in(1,+\infty)$ and we consider, for $\rho>s>1$ and $\theta\in\mathbb S^2$, the perturbed weight
\bel{phi}\phi_{\pm,s}(x',x_3):=\pm \rho\theta\cdot x'-s{(x'\cdot\theta)^2\over 2},\quad x=(x',x_3)\in\omega\times\R=\Omega_1.\ee
We define
\[ P_{A,q,\pm,s}:=e^{-\phi_{\pm,s}}(\Delta +2iA\cdot\nabla +q)e^{\phi_{\pm,s}}.\]
Like in \cite{FKSU,ST}, we consider convexified weight, instead of the linear weight used in \cite[Proposition 31]{Ki4}, in order to be able to absorb first order perturbations of the Laplacian.
Our first Carleman estimates can be seen as an extension of \cite[Proposition 2.3]{FKSU}, stated with linear weight, to unbounded cylindrical domains.  These estimates
take the following form. 
\begin{proposition}\label{p1} Let $A\in L^\infty(\Omega_1)^3$ and $q\in L^\infty(\Omega_1;\mathbb C)$. Then  there exist $s_1>1$ and, for $s>s_1$,  $\rho_1(s)$ such that for any $v\in\mathcal C^2_0(\R^3)\cap H^1_0(\Omega_1)$ 
the estimate
\bel{p1a} \begin{aligned}&\rho\int_{\partial\omega_{\pm,\theta}\times\R} |\partial_\nu v|^2|\theta\cdot \nu| d\sigma(x)+s\rho^{-2}\int_{\Omega_1}|\Delta v|^2dx+ s\int_{\Omega_1}|\nabla v|^2dx+s\rho^2\int_{\Omega_1}|v|^2dx \\
&\leq C\left[\norm{P_{A,q,\pm,s}v}^2_{L^2(\Omega_1)}+\rho\int_{\partial\omega_{\mp,\theta}\times\R} |\partial_\nu v|^2|\theta\cdot \nu| d\sigma(x)\right]\end{aligned}\ee
holds true for $s>s_1$, $\rho\geq \rho_1(s)$  with $C$  depending only on  $\Omega_1$ and $M\geq \norm{q}_{L^\infty(\Omega_1))}+\norm{A}_{L^\infty(\Omega_1)^3}$.
 \end{proposition}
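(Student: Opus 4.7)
My plan is to prove the estimate first for the unperturbed operator $L_{\pm,s}:=e^{-\phi_{\pm,s}}\Delta e^{\phi_{\pm,s}}$ (i.e., $A=0$, $q=0$), and then absorb the bounded perturbation $P_{A,q,\pm,s}v-L_{\pm,s}v = 2iA\cdot(\nabla v + v\nabla\phi_{\pm,s}) + (i\,\textrm{div}(A)-|A|^2+q)v$ into the left-hand side by taking $s$ large and then $\rho=\rho_1(s)$ large. Using $\nabla\phi_{\pm,s}=(\pm\rho - s(x\cdot\theta))\theta$ and $\Delta\phi_{\pm,s}=-s$, I would split
$$L_{\pm,s} = \mathcal A + \mathcal B, \qquad \mathcal A := \Delta + (\pm\rho - s(x\cdot\theta))^2, \qquad \mathcal B := 2(\pm\rho - s(x\cdot\theta))\theta\cdot\nabla - s,$$
where $\mathcal A$ is formally self-adjoint and $\mathcal B$ formally anti-self-adjoint on $\mathcal C^\infty_c(\Omega_1)$, so that
$$\|L_{\pm,s}v\|^2_{L^2(\Omega_1)} = \|\mathcal A v\|^2 + \|\mathcal B v\|^2 + 2\,\mathrm{Re}\langle \mathcal A v, \mathcal B v\rangle.$$

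The cross term is to be evaluated by integration by parts, using the compact $x_3$-support of $v$ (which rules out boundary contributions at infinity) and $v|_{\partial\omega\times\R}=0$ (which forces $\nabla v=(\partial_\nu v)\nu$ on the lateral boundary). After carrying out the computation one obtains
$$2\,\mathrm{Re}\langle \mathcal A v,\mathcal B v\rangle = 4s\|\theta\cdot\nabla v\|^2 + 4s\!\int_{\Omega_1}\!(\pm\rho - s(x\cdot\theta))^2|v|^2\,dx + \int_{\partial\omega\times\R}\!2(\pm\rho - s(x\cdot\theta))(\theta\cdot\nu)|\partial_\nu v|^2\,d\sigma.$$
Taking $\rho \geq 2s\sup_{x\in\omega}|x\cdot\theta|$, the bulk coefficient satisfies $(\pm\rho - s(x\cdot\theta))^2 \geq \rho^2/4$ and the boundary density is dominated by $\pm 2\rho(\theta\cdot\nu)$; splitting the boundary integral by the sign of $\theta\cdot\nu$ and moving the contribution on the unfavourable face $\partial\omega_{\mp,\theta}\times\R$ to the right-hand side produces
$$s\rho^2\|v\|^2 + s\|\theta\cdot\nabla v\|^2 + \rho\!\int_{\partial\omega_{\pm,\theta}\times\R}\!|\theta\cdot\nu||\partial_\nu v|^2 \leq C\!\left[\|L_{\pm,s}v\|^2 + \rho\!\int_{\partial\omega_{\mp,\theta}\times\R}\!|\theta\cdot\nu||\partial_\nu v|^2\right].$$

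It remains to upgrade $s\|\theta\cdot\nabla v\|^2$ to the full $s\|\nabla v\|^2$ and to extract the $s\rho^{-2}\|\Delta v\|^2$ term, which I would do via $\Delta v = \mathcal A v - (\pm\rho - s(x\cdot\theta))^2 v$ (giving $\|\Delta v\|^2 \leq 2\|\mathcal A v\|^2 + C\rho^4\|v\|^2$) together with the Dirichlet bound $\|\nabla v\|^2 = -\mathrm{Re}\langle v,\Delta v\rangle \leq \tfrac{\rho^2}{2}\|v\|^2 + \tfrac{1}{2\rho^2}\|\Delta v\|^2$. Since dropping the nonnegative terms of the identity for $\|L_{\pm,s}v\|^2$ gives $\|\mathcal A v\|^2 \leq \|L_{\pm,s}v\|^2 + C\rho\!\int_{\partial\omega_{\mp,\theta}\times\R}\!|\theta\cdot\nu||\partial_\nu v|^2$, both $s\|\nabla v\|^2$ and $s\rho^{-2}\|\Delta v\|^2$ are controlled by the right-hand side of the target estimate (using $\rho^2 \geq s$). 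Finally, the perturbation obeys $\|(P_{A,q,\pm,s}-L_{\pm,s})v\|^2 \leq C_M((1+\rho^2)\|v\|^2 + \|\nabla v\|^2)$, absorbed by choosing $s\geq s_1(M)$ and $\rho\geq\rho_1(s,M)$ large. The main obstacle I anticipate is the careful sign bookkeeping in the boundary term of the cross-product identity: the threshold $\rho_1(s)$ must be taken proportional to $s$ so that the correction $-s(x\cdot\theta)$ cannot flip the sign of the leading $\pm\rho(\theta\cdot\nu)$ on either face of $\partial\omega$, ensuring the split into $\partial\omega_{\pm,\theta}$ and $\partial\omega_{\mp,\theta}$ remains effective.
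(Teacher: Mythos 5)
Your proof is correct, and it takes a genuinely different organizational route from the paper. The paper splits the conjugated Laplacian into \emph{three} pieces $P_{1,+}+P_{2,+}+P_{3,+}$, deliberately isolating $P_{2,+}=\partial_{x_3}^2$, so that the cross term $2\mathrm{Re}\langle P_{1,+}v,P_{3,+}v\rangle$ can be handled \emph{fiberwise} by quoting the two-dimensional Carleman estimate of \cite[Proposition~2.3]{FKSU} on the cross-section $\omega$ and then integrating over $x_3$; the remaining $2\mathrm{Re}\langle P_{2,+}v,P_{3,+}v\rangle=s\|\partial_{x_3}v\|^2$ is computed separately and supplies the $x_3$-component of the full gradient term, so that $s\|\nabla v\|^2$ appears on the left of the intermediate estimate directly. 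You instead use the standard symmetric/antisymmetric split $\mathcal A=\Delta+|\nabla\phi_{\pm,s}|^2$, $\mathcal B=2\nabla\phi_{\pm,s}\cdot\nabla+\Delta\phi_{\pm,s}$ and carry out the Rellich-type integration by parts directly in $3$D; your formula for $2\mathrm{Re}\langle\mathcal Av,\mathcal Bv\rangle$ is correct, and since it only yields $4s\|\theta\cdot\nabla v\|^2$ rather than the full $s\|\nabla v\|^2$, you have to recover $s\|\nabla v\|^2$ and $s\rho^{-2}\|\Delta v\|^2$ by a separate bootstrap through $\|\mathcal Av\|^2$ and the Dirichlet bound $\|\nabla v\|^2\leq\tfrac{1}{2\rho^2}\|\Delta v\|^2+\tfrac{\rho^2}{2}\|v\|^2$. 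Both strategies work; yours is more self-contained (no appeal to an external Carleman lemma) at the cost of a slightly more tangled absorption argument, while the paper's is shorter but leans on \cite{FKSU}. Two small corrections: (i) by the paper's definition $P_{A,q,\pm,s}=e^{-\phi_{\pm,s}}(\Delta+2iA\cdot\nabla+q)e^{\phi_{\pm,s}}$, so the perturbation is $2iA\cdot(\nabla v+v\nabla\phi_{\pm,s})+qv$; the $i\,\mathrm{div}(A)-|A|^2$ you wrote does not belong here (and $\mathrm{div}(A)$ would not even be in $L^\infty$ under the stated hypotheses), though this does not affect the absorption step; (ii) the boundary density is $2(\pm\rho-s(x'\cdot\theta))(\theta\cdot\nu)$ with the prefactor lying in $[\rho/2,3\rho/2]$ on $\{\rho\geq 2s\sup_{\omega}|x'\cdot\theta|\}$, so the constants in front of the favourable and unfavourable boundary terms differ ($\rho$ versus $3\rho$), which is harmless but worth stating precisely rather than as ``dominated by $\pm2\rho(\theta\cdot\nu)$''.
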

\begin{proof}
We start by proving that  for all $s>1$ there exists $\rho_1(s)$ such that for $\rho >\rho_1(s)$  we have
\bel{p1b}\begin{aligned}\norm{e^{-\phi_{\pm,s}}\Delta e^{\phi_{\pm,s}}v}^2_{L^2(\Omega_1)}\geq&\rho\int_{\partial\omega_{\pm,\theta}\times\R} |\partial_\nu v|^2|\theta\cdot \nu| d\sigma(x)-8\rho\int_{\partial\omega_{\mp,\theta}\times\R} |\partial_\nu v|^2|\theta\cdot \nu| d\sigma(x)+s\int_{\Omega_1}|\nabla v|^2dx\\
\ &+\frac{s\rho^2}{2}\int_{\Omega_1}|v|^2dx+cs\rho^{-2}\int_{\Omega_1}|\Delta v|^2dx,\end{aligned}\ee
with $c$ depending only on $\Omega_1$.
Using this estimate,  we will derive \eqref{p1a}. The proof of this result being similar for $e^{-\phi_{+,s}}\Delta e^{\phi_{+,s}}$ and $e^{-\phi_{-,s}}\Delta e^{\phi_{-,s}}$, we will only consider it for $e^{-\phi_{+,s}}\Delta e^{\phi_{+,s}}$.  We decompose $e^{-\phi_{+,s}}\Delta e^{\phi_{+,s}}$ into three terms
\[e^{-\phi_{+,s}}\Delta e^{\phi_{+,s}}=P_{1,+}+P_{2,+}+P_{3,+},\]
with
\[P_{1,+}=\Delta'+|\nabla\phi_{+,s}|^2-\Delta' \phi_{+,s}=\Delta'+\rho^2-2s\rho (x'\cdot\theta)+s^2 (x'\cdot\theta)^2+s,\]

\[P_{2,+}=\partial_{x_3}^2,\quad P_{3,+}=2\nabla'\phi_{+,s}\cdot\nabla' +2\Delta' \phi_{+,s}=2(\rho-s (x'\cdot\theta))\theta  \cdot\nabla'-2s.\]
Here $\Delta':=\pd_{x_1}^2+\pd_{x_2}^2$, $\nabla':=(\pd_{x_1},\pd_{x_2})^T$ and $\theta\cdot\nabla'=\theta_1\pd_{x_1}+\theta_2\pd_{x_2}$.
Using some arguments similar to  \cite[Proposition 2.3]{FKSU}, one can check that for all $s>1$ there exists $\rho_2(s)>1$ such that for $\rho >\rho_2(s)$  and $y\in\mathcal C^\infty(\overline{\omega})\cap H^1_0(\omega)$ we have
$$\begin{aligned}&2\re\int_\omega P_{1,+}y\overline{P_{3,+}y}dx'\\
&\geq \rho\int_{\partial\omega_{\pm,\theta}} |\partial_\nu y|^2|\theta\cdot \nu| d\sigma(x')-8\rho\int_{\partial\omega_{\mp,\theta}} |\partial_\nu y|^2|\theta\cdot \nu| d\sigma(x')+s\rho^2\int_{\Omega_1}|y|^2dx'+s\int_\omega |\nabla' y|^2dx.\end{aligned}$$
 Applying this estimate to $v(\cdot,x_3):=x'\mapsto v(x',x_3)$, $x_3\in\R$, we obtain
$$\begin{aligned}2\re\int_\omega P_{1,+}v(\cdot,x_3)\overline{P_{3,+}v(\cdot,x_3)}dx'&\geq \rho\int_{\partial\omega_{\pm,\theta}} |\partial_\nu v(\cdot,x_3)|^2|\theta\cdot \nu| d\sigma(x')+s\int_\omega |\nabla' v(\cdot,x_3)|^2dx\\
\ &\ \ \ -8\rho\int_{\partial\omega_{\mp,\theta}} |\partial_\nu v(\cdot,x_3)|^2|\theta\cdot \nu| d\sigma(x')+s\rho^2\int_{\omega}|v(\cdot,x_3)|^2dx',\quad x_3\in\R.\end{aligned}$$
Integrating this estimate with respect to $x_3\in\R$, we get
\bel{p1c}\begin{aligned}&\norm{P_{1,+}v+P_{2,+}v+P_{3,+}v}^2_{L^2(\Omega_1)}\\
&\geq \norm{P_{1,+}v+P_{2,+}v}^2_{L^2(\Omega_1)}+2\re\int_{\Omega_1} P_{1,+}v\overline{P_{3,+}v}dx+2\re\int_{\Omega_1} P_{2,+}v\overline{P_{3,+}v}dx \\
&\geq \norm{P_{1,+}v+P_{2,+}v}^2_{L^2(\Omega_1)}+2\re\int_{\Omega_1} P_{2,+}v\overline{P_{3,+}v}dx+2\rho\int_{\pd\omega_{+,\theta}\times\R} |\partial_\nu v|^2|\theta\cdot \nu| d\sigma(x)  \\
&\ \ \ -8\rho\int_{\pd\omega_{-,\theta}\times\R} |\partial_\nu v|^2|\theta\cdot \nu| d\sigma(x)+s\rho^2\int_{\Omega_1} |v|^2dx+s\int_{\Omega_1} |\nabla' v|^2dx .\end{aligned}\ee
On the other hand, integrating by parts with respect to $x_3\in\R$ and then with respect to $x'\in\omega$, we find
\bel{p1d}\begin{aligned}\re\int_{\Omega_1} P_{2,+}v\overline{P_{3,+}v}dx&=-\int_{\Omega_1}(\rho-s (x'\cdot\theta))\theta  \cdot\nabla'|\partial_{x_3}v|^2dx+2s\int_{\Omega_1}|\partial_{x_3}v|^2dx\\
\ &=s\int_{\Omega_1}|\partial_{x_3}v|^2dx.\end{aligned}\ee
Moreover, fixing
$$\tilde{c}=4\left(3+\sup_{x'\in\overline{\omega}}|x'|\right)^2,\quad \rho_1(s)=\rho_2(s)+\tilde{c}^{-1}\sqrt{s},$$
we deduce that, for $\rho>\rho_1(s)$, we have
$$\norm{P_{1,+}v+P_{2,+}v}^2_{L^2(\Omega_1)}\geq s\tilde{c}^{-1}\rho^{-2}\norm{P_{1,+}v+P_{2,+}v}^2_{L^2(\Omega_1)}\geq s(2\tilde{c})^{-1}\rho^{-2}\norm{\Delta v}_{L^2(\Omega_1)}^2-\frac{s\rho^2}{2} \norm{ v}_{L^2(\Omega_1)}^2.$$
Combining this with \eqref{p1c}-\eqref{p1d} we deduce \eqref{p1b}. Now let us complete the proof of \eqref{p1a}. For this purpose, we introduce 
$$P_{4,\pm}=2iA\cdot\nabla+2iA\cdot\nabla \phi_{\pm,s}+q=2iA\cdot\nabla+2(\pm\rho-s(x'\cdot\theta)) iA'\cdot\theta +q,$$
with $A=(a_1,a_2,a_3)$ and $A'=(a_1,a_2)$, and we recall that $P_{A,q,\pm,s}=e^{-\phi_{\pm,s}}\Delta e^{\phi_{\pm,s}}+P_{4,\pm}$. We find
\[\begin{aligned}&\norm{P_{A,q,\pm,s}v}^2_{L^2(\Omega_1)}\\
&\geq {\norm{e^{-\phi_{\pm,s}}\Delta e^{\phi_{\pm,s}}v}^2_{L^2(\Omega_1)}\over 2}-\norm{P_{4,\pm}v}_{L^2(\Omega_1)}^2\\
&\geq {\norm{^{-\phi_{\pm,s}}\Delta e^{\phi_{\pm,s}}v}^2_{L^2(\Omega_1)}\over 2}-3\norm{A}_{L^\infty(\Omega_1)}^2\int_{\Omega_1} |\nabla  v|^2dx -3\left(16\norm{A}_{L^\infty(\Omega_1)}^2\rho +\norm{q}_{L^\infty(\Omega_1)}^2\right)\int_{\Omega_1} | v|^2dx .\end{aligned}\]
Fixing $s_1=48\norm{A}_{L^\infty(\Omega_1)}^2+6$, we deduce \eqref{p1a} from \eqref{p1b}.
\end{proof}

A direct consequence of these Carleman estimates  is the following result which will be useful for Theorem \ref{t6}.

\begin{co}\label{c2}  Let $A\in L^\infty(\Omega_1)^3$ and $q\in L^\infty(\Omega_1;\mathbb C)$.  There exists $\rho_1'>0$ such that for  any  $u\in\mathcal C^2_0(\R^3)\cap H^1_0(\Omega_1)$ 
the estimate
\begin{equation}\label{c2a}\begin{array}{l}\rho\int_{\pd\omega_{+,\theta}\times\R}e^{-2\rho\theta\cdot x'}\abs{\partial_\nu u}^2\abs{\theta\cdot\nu(x) } d\sigma(x)
+\rho^2\int_{\Omega_1} e^{-2\rho\theta\cdot x'}\abs{u}^2dx+\int_{\Omega_1} e^{-2\rho\theta\cdot x'}|\nabla u|^2dx\\
\leq C\left(\int_{\Omega_1} e^{-2\theta\cdot x'}\abs{(-\Delta+2iA\cdot\nabla+q)u}^2dx+\rho\int_{\pd\omega_{-,\theta}\times\R}e^{-2\rho\theta\cdot x'}\abs{\partial_\nu u}^2\abs{\theta\cdot\nu(x) }d\sigma(x)\right)\end{array}\end{equation}
holds true for $\rho\geq \rho_1'$  with $C$  depending only on  $\Omega_1$ and $M\geq \norm{q}_{L^\infty(\Omega_1)}+\norm{A}_{L^\infty(\Omega_1)^3}$.
\end{co}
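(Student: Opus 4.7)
The plan is to deduce Corollary \ref{c2} from Proposition \ref{p1} by a weight conjugation, exploiting the fact that for fixed $s$ the convexified weight $\phi_{\pm,s}$ is a bounded perturbation of the linear weight $\pm\rho\theta\cdot x'$ when $x'\in\omega$.

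First I would fix once and for all a value $s=s_0>s_1$ (say $s_0:=s_1+1$), so that Proposition \ref{p1} applies for every $\rho\geq\rho_1(s_0)=:\rho_1'$. Set $R:=\sup_{x'\in\overline{\omega}}|x'|$; since $\omega$ is bounded, $e^{s_0(x'\cdot\theta)^2}$ lies between $1$ and $e^{s_0R^2}$, so $e^{-2\phi_{+,s_0}(x',x_3)}$ is comparable to $e^{-2\rho\theta\cdot x'}$ with constants depending only on $s_0$ and $\omega$. I will work with the $``+''$ sign throughout (the $``-''$ case being identical).

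Next, given $u\in\mathcal C^2_0(\R^3)\cap H^1_0(\Omega_1)$, I would set $v:=e^{-\phi_{+,s_0}}u$. Then $v$ still belongs to $\mathcal C^2_0(\R^3)\cap H^1_0(\Omega_1)$, since $\phi_{+,s_0}$ is smooth and $u$ vanishes on $\pd\Omega_1$. The key observation is that the Carleman estimate \eqref{p1a} of Proposition \ref{p1} should be applied to the pair $(\tilde A,\tilde q):=(-A,-q)$, for which $P_{\tilde A,\tilde q,+,s_0}=-e^{-\phi_{+,s_0}}(-\Delta+2iA\cdot\nabla+q)e^{\phi_{+,s_0}}$. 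Plugging in $v$ yields
\[
\|P_{\tilde A,\tilde q,+,s_0}v\|_{L^2(\Omega_1)}^2=\int_{\Omega_1}e^{-2\phi_{+,s_0}}|(-\Delta+2iA\cdot\nabla+q)u|^2dx,
\]
and since $u$ vanishes on $\pd\Omega_1$, the boundary contributions transform as $|\partial_\nu v|^2=e^{-2\phi_{+,s_0}}|\partial_\nu u|^2$ on $\pd\omega\times\R$, which matches the boundary terms on both sides of \eqref{c2a} after using the equivalence between $e^{-2\phi_{+,s_0}}$ and $e^{-2\rho\theta\cdot x'}$.

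The one step requiring a small computation is converting the interior gradient term. From $\nabla v=e^{-\phi_{+,s_0}}(\nabla u-u\nabla\phi_{+,s_0})$ and the bound $|\nabla\phi_{+,s_0}|\leq C(\rho+s_0)\leq C'\rho$ (valid on $\overline{\omega}\times\R$ for $\rho\geq\rho_1'$), one obtains
\[
e^{-2\phi_{+,s_0}}|\nabla u|^2\leq 2|\nabla v|^2+C'\rho^2e^{-2\phi_{+,s_0}}|u|^2=2|\nabla v|^2+C'\rho^2|v|^2.
\]
Integrating and appealing to the $s_0\int|\nabla v|^2$ and $s_0\rho^2\int|v|^2$ terms on the left-hand side of \eqref{p1a}, the gradient term in \eqref{c2a} is controlled. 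The $L^2$ term and boundary terms are controlled directly by their counterparts in \eqref{p1a}, using the pointwise equivalence between $e^{-2\phi_{+,s_0}}$ and $e^{-2\rho\theta\cdot x'}$. Since $s_0$ is fixed, all constants may be absorbed into $C$; this yields \eqref{c2a} for every $\rho\geq\rho_1'$.

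The step I expect to be the only real bookkeeping obstacle is the gradient conversion, because it requires that the cross term $C'\rho^2\int|v|^2$ be absorbed by the Carleman estimate itself rather than appearing as a loss on the right. Since $s_0$ is chosen once and for all and the coefficient $s_0$ in front of $\rho^2\int|v|^2$ in \eqref{p1a} is then a positive constant, this absorption is clean and no smallness in $\rho$ is required beyond $\rho\geq\rho_1'$.
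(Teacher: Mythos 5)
Your proof is correct and follows essentially the same route as the paper: fix $s=s_0:=s_1+1$, conjugate to $v=e^{-\phi_{+,s_0}}u$, transfer the boundary, $L^2$ and gradient terms from \eqref{p1a}, and absorb the cross term $C'\rho^2\int|v|^2$ into the $s_0\rho^2\int|v|^2$ term on the left of \eqref{p1a}. You are in fact slightly more careful than the paper in noting that, since $P_{A,q,+,s}$ is conjugated from $\Delta+2iA\cdot\nabla+q$ while the integrand in \eqref{c2a} involves $-\Delta+2iA\cdot\nabla+q$, Proposition \ref{p1} should be invoked with $(A,q)$ replaced by $(-A,-q)$; the paper elides this sign, which is harmless because its constant depends only on $\norm{A}_{L^\infty(\Omega_1)^3}+\norm{q}_{L^\infty(\Omega_1)}$.
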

\begin{proof}
We fix $u\in\mathcal C^2_0(\R^3)\cap H^1_0(\Omega_1)$   and we set  $v=e^{-\phi_{+,s}}u$ such that $$\int_{\Omega_1} e^{-2\phi_{+,s}}|(-\Delta+2iA\cdot\nabla+q)u|^2dx=\int_{\Omega_1}|P_{A,q,+,s}v|^2dx.$$
The fact that $v\in H^1_0(\Omega_1)$ implies $\partial_\nu v_{\vert\pd\Omega_1}=e^{-\rho\theta\cdot x'}e^{{s(x\cdot\theta)^2\over2}}\partial_\nu u_{\vert\pd\Omega_1}$ and we deduce that
\bel{c2b}\int_{\pd\omega_{+,\theta}\times\R} |\pd_\nu v|^2\omega\cdot\nu d\sigma(x)\geq \int_{\pd\omega_{+,\theta}\times\R} e^{-2\rho\theta\cdot x'}|\pd_\nu u|^2\omega\cdot\nu d\sigma(x)\ee
\bel{c2c}\int_{\pd\omega_-\times\R} |\pd_\nu v|^2\omega\cdot\nu d\sigma(x)\geq e^{sb^2}\int_{\pd\omega_-\times\R} e^{-2\rho\theta\cdot x'}|\pd_\nu u|^2\omega\cdot\nu d\sigma(x),\ee
with $b=\left(2+2\sup_{x'\in\omega}|x'|\right)$.
Moreover, since
\[ \nabla u(x)=\nabla (e^{\phi_{+,s}}v)=(\rho-sx'\cdot\theta)u\omega+e^{\rho\theta\cdot x'}e^{-{s(x'\cdot\theta)^2\over2}}\nabla v,\quad x=(x',x_3)\in\omega\times\R,\]
we obtain
\[\int_{\Omega_1} e^{-2\rho\theta\cdot x'}|\nabla u|^2dx\leq 2\rho^2e^{sb^2}\int_{\Omega_1} |v|^2dx+2e^{sb^2}\int_{\Omega_1}|\nabla v|^2dx.\]
Combining this estimates with \eqref{p1a} and \eqref{c2b}-\eqref{c2c}, for $s\geq s_1$ and $\rho>\rho_1(s)$, we get
\bel{10}\begin{array}{l}\int_{\Omega_1} e^{-2\rho\theta\cdot x'}|\nabla u|^2dx+\rho^2\int_{\Omega_1} e^{-2\rho\theta\cdot x'}|u|^2dx+\rho\int_{\pd\omega_{+,\theta}\times\R} e^{-2\rho\theta\cdot x'}|\pd_\nu u|^2\omega\cdot\nu d\sigma(x)\\
\ \\
\leq \rho e^{sb^2}\int_{\pd\omega_{-,\theta}\times\R} e^{-2\rho\theta\cdot x'}|\pd_\nu u|^2\omega\cdot\nu d\sigma(x)+Ce^{sb^2}\int_{\Omega_1} e^{-2\rho\theta\cdot x'}|(-\Delta+2iA\cdot\nabla+q)u|^2dx.\end{array}\ee
From this last estimate we deduce \eqref{c2a} by fixing $s=s_1+1$ and $\rho_1'=\rho_1(s_1+1)$. 

\end{proof}

\begin{rem}\label{r1} By density the result of Proposition \ref{p1} and Corollary \ref{c1} can be extended to any $v\in H^1_0(\Omega_1)$ satisfying $\Delta v\in L^2(\Omega_1)$ and $\partial_\nu v\in L^2(\pd\Omega_1)$.\end{rem}
\subsection{Carleman estimate in negative order Sobolev space}

The goal of this subsection is to apply the result of Proposition \ref{p1} in order to derive  Carleman estimates in negative order Sobolev space which will be   one of the most important ingredient  in the construction of the CGO solutions.
We recall first some preliminary tools and we derive a Carleman estimate in Sobolev space of negative order.  In a similar way to \cite{Ki3}, for all $m\in\R$, we introduce the space $H^m_\rho(\R^{3})$ defined by
\[H^m_\rho(\R^{3})=\{u\in\mathcal S'(\R^{3}):\ (|\xi|^2+\rho^2)^{m\over 2}\hat{u}\in L^2(\R^{3})\},\]
with the norm
\[\norm{u}_{H^m_\rho(\R^{3})}^2=\int_{\R^3}(|\xi|^2+\rho^2)^{m}|\hat{u}(\xi)|^2 d\xi .\]
Here for all tempered distributions $u\in \mathcal S'(\R^3)$, we denote by $\hat{u}$ the Fourier transform of $u$ which, for $u\in L^1(\R^3)$, is defined by
$$\hat{u}(\xi):=\mathcal Fu(\xi):= (2\pi)^{-{3\over2}}\int_{\R^3}e^{-ix\cdot \xi}u(x)dx.$$
From now on, for $m\in\R$ and $\xi\in \R^3$,  we set $$\left\langle \xi,\rho\right\rangle=(|\xi|^2+\rho^2)^{1\over2}$$
and $\left\langle D_x,\rho\right\rangle^m u$ defined by
\[\left\langle D_x,\rho\right\rangle^m u=\mathcal F^{-1}(\left\langle \xi,\rho\right\rangle^m \mathcal Fu).\]
For $m\in\R$ we define also the class of symbols
\[S^m_\rho=\{c_\rho\in\mathcal C^\infty(\R^3\times\R^3):\ |\pd_x^\alpha\pd_\xi^\beta c_\rho(x,\xi)|\leq C_{\alpha,\beta}\left\langle \xi,\rho\right\rangle^{m-|\beta|},\  \alpha,\beta\in\mathbb N^3\}.\]
Following \cite[Theorem 18.1.6]{Ho3}, for any $m\in\R$ and $c_\rho\in S^m_\rho$, we define $c_\rho(x,D_x)$, with  $D_x=-i\nabla $, by
\[c_\rho(x,D_x)y(x)=(2\pi)^{-{3\over 2}}\int_{\R^3}c_\rho(x,\xi)\hat{y}(\xi)e^{ix\cdot \xi} d\xi,\quad y\in\mathcal S(\R^3).\]
For all $m\in\R$, we set also $OpS^m_\rho:=\{c_\rho(x,D_x):\ c_\rho\in S^m_\rho\}$.
We fix
$$P_{A,q,\pm}:=e^{\mp \rho x'\cdot\theta}(\Delta_{ A}+q) e^{\pm \rho x'\cdot\theta}$$
and, in the spirit of \cite[estimate (2.14)]{FKSU} and \cite[Lemma 2.1]{ST},  we consider the following Carleman estimate.

\begin{proposition}\label{p2} Let $A\in L^\infty(\Omega_1)^3$ and $q\in L^\infty(\Omega_1;\mathbb C)$. Then, there exists $\rho_2>1$  such that  for all $v\in \mathcal C^\infty_0(\Omega_1)$, we have 
\bel{p2a}\rho^{-1}\norm{v}_{H^1_\rho(\R^3)}\leq C\norm{P_{A,q,\pm}v}_{H^{-1}_\rho(\R^3)},\quad \rho>\rho_2,\ee
with $C>0$ depending on $\Omega_1$  and $\norm{q}_{L^\infty(\Omega_1)}+\norm{A}_{L^\infty(\Omega_1)^3}$.
\end{proposition}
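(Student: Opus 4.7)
The plan is to upgrade the $L^2$ Carleman estimate of Proposition \ref{p1} to the negative-order estimate \eqref{p2a} by a pseudo-differential shift of order one, in the spirit of \cite[(2.14)]{FKSU} and \cite[Lemma 2.1]{ST}, and to handle the low regularity of $A$ and $q$ by absorption for $\rho$ large.

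First I would fix $s = s_1+1$ in Proposition \ref{p1}. For $v \in \mathcal C^\infty_0(\Omega_1)$, extended by $0$ to $\R^3$, the boundary terms in \eqref{p1a} vanish and the three bulk terms combine, via the elementary bound $\|v\|^2_{H^2_\rho(\R^3)} \lesssim \rho^4\|v\|_{L^2}^2 + \|\Delta v\|_{L^2}^2$, into the $L^2$-to-$H^2_\rho$ Carleman estimate
\bel{p2plan1}
\rho^{-1}\|v\|_{H^2_\rho(\R^3)} \leq C\|P_{A,q,\pm,s}v\|_{L^2(\R^3)}.
\ee
Since $\omega$ is bounded, the convexifying factor $\psi(x') := e^{-s(x'\cdot\theta)^2/2}$ is smooth, bounded above and below by positive constants on $\omega$, with bounded derivatives there. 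The identity $P_{A,q,\pm,s}v = \psi^{-1}\tilde P_\pm(\psi v)$, where $\tilde P_\pm := e^{\mp\rho x'\cdot\theta}(\Delta + 2iA\cdot\nabla + q)e^{\pm\rho x'\cdot\theta}$, allows \eqref{p2plan1} to be converted into the corresponding estimate with $\tilde P_\pm$ on the right, the commutator terms involving derivatives of $\psi^{\pm 1}$ being of lower order and absorbable for $\rho$ large.

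The key step is the one-order shift down, implemented via the Fourier multiplier $\langle D_x,\rho\rangle^{-1}$. Setting $V := \langle D_x,\rho\rangle^{-1}v$, one has $\|V\|_{H^2_\rho} = \|v\|_{H^1_\rho}$ and $\|\langle D_x,\rho\rangle^{-1}f\|_{L^2} = \|f\|_{H^{-1}_\rho}$. Since $V$ is not compactly supported, I would first multiply by a cutoff $\chi \in \mathcal C^\infty_0(\Omega_1)$ equal to $1$ near $\supp(v)$; the commutator $[\chi,\langle D_x,\rho\rangle^{-1}]$ is of order $-2$ and produces negligible contributions. Applying the $\tilde P_\pm$-version of \eqref{p2plan1} to $\chi V$ and using the decomposition
\[
\tilde P_\pm V = \langle D_x,\rho\rangle^{-1}\tilde P_\pm v + [\tilde P_\pm,\langle D_x,\rho\rangle^{-1}]v
\]
reduces matters to bounding the commutator $[\tilde P_\pm,\langle D_x,\rho\rangle^{-1}]v$ in $L^2$. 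Splitting $\tilde P_\pm = L_\pm + M_\pm$ into a constant-coefficient principal part $L_\pm := \Delta \pm 2\rho\theta\cdot\nabla' + \rho^2$ (itself a Fourier multiplier, hence commuting exactly with $\langle D_x,\rho\rangle^{-1}$) and a non-smooth lower-order perturbation $M_\pm := 2iA\cdot\nabla \pm 2i\rho A'\cdot\theta + q$ isolates the difficulty in the $M_\pm$ commutator alone.

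The main obstacle is precisely this commutator, because symbolic pseudo-differential calculus is unavailable for the $L^\infty$ coefficients $A,q$. The way around is a direct duality estimate: using $\|h\|_{H^{-1}_\rho} = \sup_{\|g\|_{H^1_\rho}=1}|\langle h,g\rangle|$, integrating by parts the $A$-terms, and applying $\|g\|_{L^2}\leq \rho^{-1}\|g\|_{H^1_\rho}$ together with $\|\nabla g\|_{L^2}\leq \|g\|_{H^1_\rho}$, one obtains
\[
\|M_\pm v\|_{H^{-1}_\rho} + \|[M_\pm,\langle D_x,\rho\rangle^{-1}]v\|_{L^2} \leq C(\|A\|_{L^\infty}+\|q\|_{L^\infty})\rho^{-1}\|v\|_{H^1_\rho},
\]
the apparently dangerous $O(\rho)$ coefficient in $M_\pm$ being killed by pairing with the order-$(-1)$ smoothing. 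The bounded distributional gap $P_{A,q,\pm} - \tilde P_\pm = i\,\textrm{div}(A) - |A|^2$ admits the same type of bound through $\|(\textrm{div}A)v\|_{H^{-1}_\rho}\leq C\|A\|_{L^\infty}\rho^{-1}\|v\|_{H^1_\rho}$ after one integration by parts. All these remainders being of order $C\rho^{-1}\|v\|_{H^1_\rho}$, they are absorbed on the left for $\rho \geq \rho_2$ sufficiently large, yielding \eqref{p2a}.
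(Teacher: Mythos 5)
There is a genuine gap in your absorption step, and it comes from the order in which you carry out the weight conversion and the negative-order shift.

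You fix $s=s_1+1$ at the very beginning and pass to the linear weight $\tilde P_\pm$ before performing the $\langle D_x,\rho\rangle^{-1}$ shift. This discards the quantitative gain that the convexified weight provides. Your commutator estimate is correct in size: for $A\in L^\infty$ the crude triangle bound gives
\[
\norm{[M_\pm,\langle D_x,\rho\rangle^{-1}]v}_{L^2(\R^3)}\leq C\norm{A}_{L^\infty}\rho^{-1}\norm{v}_{H^1_\rho(\R^3)},
\]
and since $A$ is merely bounded there is no commutator gain beyond this (the kernel of $\langle D_x,\rho\rangle^{-1}$ lives at scale $\rho^{-1}$, but $|A(x)-A(y)|$ does not decay with $|x-y|$). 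This remainder therefore carries exactly the same power $\rho^{-1}$ as the left-hand side $\rho^{-1}\norm{v}_{H^1_\rho}$, so it cannot be absorbed by taking $\rho$ large: the inequality you end up with reads $\rho^{-1}\norm{v}_{H^1_\rho}\leq C_1\norm{P_{A,q,\pm}v}_{H^{-1}_\rho}+C_1C_2\norm{A}_{L^\infty}\rho^{-1}\norm{v}_{H^1_\rho}$, and absorption requires $C_1C_2\norm{A}_{L^\infty}<1$, i.e.\ smallness of the magnetic potential, which is not assumed. Your closing sentence (``All these remainders being of order $C\rho^{-1}\norm{v}_{H^1_\rho}$, they are absorbed on the left for $\rho\geq\rho_2$'') is exactly where the argument breaks.

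The paper avoids this by performing the $\langle D_x,\rho\rangle^{-1}$ shift while $s$ is still free. From \eqref{p1a} the leading part $P_{1,+}+P_{2,+}$ (which has smooth polynomial coefficients, so the symbolic calculus applies and produces a commutator error of size $O(s^2)\norm{w}_{L^2}$) yields a lower bound with an explicit factor $s^{1/2}$, namely $\norm{(P_{1,+}+P_{2,+})\langle D_x,\rho\rangle w}_{H^{-1}_\rho}\geq Cs^{1/2}\rho^{-1}\norm{w}_{H^2_\rho}$ after taking $\rho/s^2$ large. The rough perturbation $P_{3,+,A}$ is then bounded in the negative norm by the same duality argument you use, $\norm{P_{3,+,A}\langle D_x,\rho\rangle w}_{H^{-1}_\rho}\leq C\rho^{-1}\norm{w}_{H^2_\rho}$, and this is absorbed not by $\rho$ but by taking $s$ large so that $Cs^{1/2}$ dominates. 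Only after this absorption, with $s$ now fixed, does the paper convert the convexified weight back to the linear one. Your plan to convert first also cannot be rescued by keeping $s$ free, because the conjugating factor $e^{\pm s(x'\cdot\theta)^2/2}$ produces constants of size $e^{Cs}$, swamping the $s^{1/2}$ gain. So the ordering (shift, absorb via large $s$, then fix $s$ and convert) is essential and is the missing idea here.
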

\begin{proof} 
Since this result is similar for $P_{A,q,+}v$ and $P_{A,q,-}v$, we will only prove it for $P_{A,q,+}v$.
For $\phi_{+,s}$ given by \eqref{phi}, we  consider
$$R_{ A,q,+,s}:=e^{-\phi_{+,s}}(\Delta_{ A}+q)e^{\phi_{+,s}}$$
and in a similar way to Proposition \ref{p1} we decompose $R_{A,+,s}$ into three terms
\[R_{ A,q,+,s}=P_{1,+}+P_{2,+}+P_{3,+,A},\]
where we recall that
\[P_{1,+}=\Delta+\rho^2-2s\rho (x'\cdot\theta)+s^2 (x'\cdot\theta)^2+s,\quad  P_{2,+}=2(\rho-s (x'\cdot\theta))\theta  \cdot\nabla-2s.\]
\[ P_{3,+,A}=2iA\cdot\nabla+2iA\cdot\nabla \phi_{+,s}+q-|A|^2+i\textrm{div}(A)=2iA\cdot\nabla+2(\rho-s(x'\cdot\theta)) iA'\cdot\theta +q-|A|^2+i\textrm{div}(A).\]
We pick $ \tilde{\omega}$  a bounded $\mathcal C^2$ open set of $\R^2$ such that $\overline{\omega}\subset\tilde{\omega}$ and we extend the function  $A$ and $q$ to $\R^3$ with  $ A=0$, $q=0$ on $\R^3\setminus \Omega_1$. We consider also $\tilde{\Omega}=\tilde{\omega}\times\R$. We start with the Carleman estimate
\bel{car}\rho^{-1}\norm{v}_{H^1_\rho(\R^3)}\leq C\norm{R_{ A,q,+,s}v}_{H^{-1}_\rho(\R^3)},\quad v\in\mathcal C^\infty_0(\Omega_1).\ee
For this purpose, we fix $w\in H^3(\R^3)$ satisfying supp$(w)\subset\tilde{\Omega}$  and we  consider the quantity
\[\left\langle D_x,\rho\right\rangle^{-1}(P_{1,+}+P_{2,+})\left\langle D_x,\rho\right\rangle w.\]
In all the remaining parts of this proof $C>0$ denotes a generic constant depending on $\Omega_1$ and $\norm{A}_{L^\infty(\Omega_1)^3}+\norm{q}_{L^\infty(\Omega_1)}$.
Applying the properties of composition of pseudoddifferential operators (e.g. \cite[Theorem 18.1.8]{Ho3}), we find
\bel{l2c}\left\langle D_x,\rho\right\rangle^{-1}(P_{1,+}+P_{2,+})\left\langle D_x,\rho\right\rangle=P_{1,+}+P_{2,+}+S_\rho(x,D_x),\ee
where $S_\rho$ is defined by
\[S_\rho(x,\xi)=\nabla_\xi\left\langle \xi,\rho\right\rangle^{-1}\cdot D_x(p_{1,+}(x,\xi)+p_{2,+}(x,\xi))\left\langle \xi,\rho\right\rangle+\underset{\left\langle \xi,\rho\right\rangle\to+\infty}{ o}(1),\]
with
$$p_{1,+}(x,\xi)=-|\xi|^2+\rho^2-2s\rho (x'\cdot\theta)+s^2 (x'\cdot\theta)^2+s,\quad p_{2,+}(x,\xi)=2i[\rho-s(x'\cdot\theta)]\theta\cdot\xi'-2s,\quad \xi=(\xi',\xi_3)\in \R^2\times\R.$$
Therefore, we have
$$S_\rho(x,\xi)={[-2i\rho s+2is^2x'\cdot\theta+2s(\theta\cdot\xi')](\theta\cdot\xi')\over |\xi|^2+\rho^2}+\underset{\left\langle \xi,\rho\right\rangle\to+\infty}{ o}(1)$$
and it follows
\bel{l2d} \norm{S_\rho(x,D_x)w}_{L^2( \R^3)}\leq Cs^2\norm{w}_{L^2( \R^3)}.\ee
On the other hand,   applying \eqref{p1a} to $w$, which is permitted according to Remark \ref{r1}, with $\Omega_1$ replaced by  $\tilde{\Omega}$ and $A=0$, $q=0$,  we get
\[\norm{P_{1,+}w+P_{2,+}w}_{L^2(\R^3)}\geq C\left(s^{1/2}\rho^{-1}\norm{\Delta w}_{L^2(\R^3)}+s^{1/2}\norm{\nabla w}_{L^2(\R^3)}+s^{1/2}\rho\norm{ w}_{L^2(\R^3)}\right).\]
Combining this estimate with \eqref{l2c}-\eqref{l2d}, for ${\rho\over s^2}$ sufficiently large, we obtain
$$\begin{array}{l} \norm{(P_{1,+}+P_{2,+})\left\langle D_x,\rho\right\rangle w}_{H^{-1}_\rho(\R^3)}\\
=\norm{\left\langle D_x,\rho\right\rangle^{-1}(P_{1,+}+P_{2,+})\left\langle D_x,\rho\right\rangle w}_{L^2( \R^3)}\\ \geq  Cs^{1/2}\left(\rho^{-1}\norm{\Delta w}_{L^2(\R^3)}+\norm{\nabla w}_{L^2(\R^3)}+\rho\norm{ w}_{L^2(\R^3)}\right).\end{array}$$
On the other hand, using the fact that $w\in H^2(\tilde{\Omega})\cap H^1_0(\tilde{\Omega})$, the elliptic regularity for cylindrical domain (e.g. \cite[Lemma 2.2]{CKS}) implies
$$\norm{w}_{H^2(\R^3)}=\norm{w}_{H^2(\tilde{\Omega})}\leq C(\norm{\Delta w}_{L^2(\tilde{\Omega})}+\norm{ w}_{L^2(\tilde{\Omega})}).$$
Combining this with the previous estimate, for $s$ sufficiently large, we find
 \bel{l2e}\norm{(P_{1,+}+P_{2,+})\left\langle D_x,\rho\right\rangle w}_{H^{-1}_\rho(\R^3)}\geq Cs^{\frac{1}{2}}\rho^{-1}\norm{w}_{H^2_\rho(\R^3)}.\ee
Moreover, we have
\bel{p2c}\begin{aligned}&\norm{P_{3,+,A}\left\langle D_x,\rho\right\rangle w}_{H^{-1}_\rho(\R^3)}\\
&\leq \norm{[2i(\rho-s(x'\cdot\theta))A\cdot\theta+(q-|A|^2)]\left\langle D_x,\rho\right\rangle w}_{H^{-1}_\rho(\R^3)}+2\norm{A\cdot\nabla \left\langle D_x,\rho\right\rangle w}_{H^{-1}_\rho(\R^3)}\\
&\ \ \ \ +\norm{i\textrm{div}(A)\left\langle D_x,\rho\right\rangle w}_{H^{-1}_\rho(\R^3)}.\end{aligned}\ee
For the first term on the right hand side of this inequality, we have
\bel{p2d}\begin{aligned}\norm{[2i(\rho-s(x'\cdot\theta))A\cdot\theta+(q-|A|^2)]\left\langle D_x,\rho\right\rangle w}_{H^{-1}_\rho(\R^3)}&\leq\rho^{-1}\norm{[2i(\rho-s(x'\cdot\theta))A\cdot\theta+(q-|A|^2)]\left\langle D_x,\rho\right\rangle w}_{L^2(\R^3)}\\
\ &\leq C\norm{\left\langle D_x,\rho\right\rangle w}_{L^2(\R^3)}\\
\ &\leq C\norm{\left\langle D_x,\rho\right\rangle w}_{L^2(\R^3)}=C\norm{ w}_{H^1_\rho(\R^3)},\end{aligned}\ee
with $C$ depending only on $\norm{A}_{L^\infty(\Omega_1)^3}+\norm{q}_{L^\infty(\Omega_1)}$. For the second term on the right hand side of \eqref{p2c}, we get
\bel{p2e}\begin{aligned}\norm{A\cdot\nabla \left\langle D,\rho\right\rangle w}_{H^{-1}_\rho(\R^3)}&\leq \rho^{-1}\norm{A\cdot\nabla \left\langle D_x,\rho\right\rangle w}_{L^2(\R^3)}\\
\ &\leq \rho^{-1}\norm{A}_{L^\infty(\Omega_1)^3}\norm{\nabla \left\langle D,\rho\right\rangle w}_{L^2(\R^3)}\\
\ &\leq \rho^{-1}\norm{A}_{L^\infty(\Omega_1)^3}\norm{ w}_{H^{2}_\rho(\R^3)}.\end{aligned}\ee
Finally,  for the last term on the right hand side of \eqref{p2c}, by duality, we find
\bel{p2f}\begin{aligned}\norm{i\textrm{div}(A)\left\langle D_x,\rho\right\rangle w}_{H^{-1}_\rho(\R^3)}&\leq \rho^{-1} \norm{A\cdot\nabla \left\langle D_x,\rho\right\rangle w}_{L^2(\R^3)}+\norm{(\left\langle D_x,\rho\right\rangle w) A}_{L^2(\R^3)^3}\\
\ &\leq 2\rho^{-1}\norm{A}_{L^\infty(\Omega_1)^3}\norm{w}_{H^2_\rho(\R^3))}.\end{aligned}\ee
Combining \eqref{p2c}-\eqref{p2f}, we obtain
$$\norm{P_{3,+,A}\left\langle D_x,\rho\right\rangle w}_{H^{-1}_\rho(\R^3)}\leq C\rho^{-1}\norm{w}_{H^2_\rho(\R^3)}$$
and combining this with \eqref{l2e} for $s>1$ sufficiently large, we get
\bel{p2g}\norm{R_{ A,q,+,s}\left\langle D_x,\rho\right\rangle w}_{H^{-1}_\rho(\R^3)}^2\geq Cs^{\frac{1}{2}}\rho^{-1}\norm{w}_{H^2_\rho(\R^3)}.\ee
Now let us set $\omega_j$, $j=1,2$ two open subsets of $\tilde{\omega}$  such that $\overline{\omega}\subset \omega_1$, $\overline{\omega_1}\subset \omega_2$, $\overline{\omega_2}\subset \tilde{\omega}$.
We fix $\psi_0\in\mathcal C^\infty_0(\tilde{\omega})$ satisfying $\psi_0=1$ on $\overline{\omega_2}$, $w(x',x_3)=\psi_0(x') \left\langle D_x,\rho\right\rangle^{-1} v(x',x_3)$ and for $\psi_1\in\mathcal C^\infty_0(\omega_1)$ satisfying $\psi_1=1$ on $\omega$, we get $$(1-\psi_0 )\left\langle D_x,\rho\right\rangle^{-1} v=(1-\psi_0 )\left\langle D_x,\rho\right\rangle^{-1}\psi_1 v,$$
where $\psi_1v$ denotes the function $(x',x_3)=x\mapsto \psi_1(x')v(x)$. According to \cite[Theorem 18.1.8]{Ho3}, since $1-\psi_0$ is vanishing in a neighborhood of supp$(\psi_1)$, we have $(1-\psi_0) \left\langle D_x,\rho\right\rangle^{-1}\psi_1\in OpS^{-\infty}_\rho$ and it follows
\[ \begin{aligned}\rho^{-1}\norm{v}_{H^{1}_\rho(\R^3)}&=\rho^{-1}\norm{\left\langle D_x,\rho\right\rangle^{-1} v}_{H^2_\rho(\R^3)}\\
\ &\leq \rho^{-1}\norm{w}_{H^2_\rho(\R^3)}+\rho^{-1}\norm{(1-\psi_0)\left\langle D_x,\rho\right\rangle^{-1}\psi_1 v}_{H^2_\rho(\R^3)}\\
\ &\leq \rho^{-1}\norm{w}_{H^2_\rho(\R^3)}+{C\norm{v}_{L^2(\R^3)}\over\rho^2} .\end{aligned}\]
In the same way, we find
$$\begin{aligned}\norm{P_{A,-,s} v}_{H^{-1}_\rho(\R^3)}&\geq \norm{P_{A,-,s}\left\langle D_x,\rho\right\rangle w}_{H^{-1}_\rho(\R^3)}-\norm{P_{A,-,s}\left\langle D_x,\rho\right\rangle (1-\psi_0 )\left\langle D_x,\rho\right\rangle^{-1}\psi_1 v}_{H^{-1}_\rho(\R^3)}\\
\ &\geq \norm{P_{A,-,s}\left\langle D_x,\rho\right\rangle w}_{H^{-1}_\rho(\R^3)}-C\norm{ (1-\psi_0 )\left\langle D_x,\rho\right\rangle^{-1}\psi_1 v}_{H^{2}_\rho(\R^3)}\\
\ &\geq \norm{P_{A,-,s}\left\langle D_x,\rho\right\rangle w}_{H^{-1}_\rho(\R^3)}-{C\norm{v}_{L^2(\R^{1+n})}\over\rho^2}.\end{aligned}$$
Combining these estimates with \eqref{p2g}, we deduce that \eqref{car} holds true for a sufficiently large value of  $\rho$. Then, fixing $s$, we deduce \eqref{p2a}. \end{proof}

\section{CGO solutions }
\label{sec2}
In this section we introduce a class of CGO solutions suitable for our problem stated in an unbounded domain for magnetic Schr\"odinder equations. Like in the previous section, we fix $\Omega_1=\omega\times\R$. Our goal is to build CGO solutions for the equations \eqref{eq1} extended to the cylindrical domain $\Omega_1$ in order to consider their restrictions on $\Omega$ for proving Theorem \ref{t1}, since according to \eqref{closed} we have $\Omega\subset\Omega_1$. 

We consider  CGO solutions on $\Omega_1$ corresponding to some specific solutions $u_j\in H^1(\Omega_1)$, $j=1,2$, of
$\Delta_{A_1} u_1+q_1u_1=0$, $\Delta_{A_2} u_2+\overline{q_2}u_2=0$  in $\Omega_1$ for $A_j\in L^\infty(\Omega_1)^3\cap L^2(\Omega_1)^3$ and $q_j\in L^\infty(\Omega_1;\mathbb C)$. More precisely, like in \cite{Ki4}, we start by considering $\theta\in\mathbb S^{1}:=\{y\in\R^2:\ |y|=1\}$, $\xi'\in\theta^\bot\setminus\{0\}$ with $\theta^\bot:=\{y\in\R^2:\ y\cdot\theta=0\}$, $\xi:=(\xi',\xi_3)\in \R^3$ with  $\xi_3\neq0$. Then, we define $\eta\in\mathbb S^2:=\{y\in\R^3:\ |y|=1\}$  by
$$\eta=\frac{(\xi',-\frac{|\xi'|^2}{\xi_3})}{\sqrt{|\xi'|^2+\frac{|\xi'|^4}{\xi_3^2}}}.$$ 
It is clear that
\bel{orth}\eta\cdot\xi=(\theta,0)\cdot\xi=(\theta,0)\cdot\eta=0.\ee
We set also $\psi\in\mathcal C^\infty_0(\R;[0,1])$ such that $\psi=1$ on a neighborhood of $0$ in $\R$ and, for $\rho>1$, we consider  solutions $u_j\in H^1(\Omega_1)$ of
 $\Delta_{A_1} u_1+q_1u_1=0$, $\Delta_{A_2} u_2+\overline{q_2}u_2=0$ in $\Omega_1$ taking the form
\bel{CGO1}u_1(x',x_3)=e^{\rho \theta\cdot x'}\left(\psi\left(\rho^{-\frac{1}{4}}x_3\right)b_{1,\rho}e^{i\rho x\cdot\eta-i\xi\cdot x}+w_{1,\rho}(x',x_3)\right),\quad x'\in\omega,\ x_3\in\R,\ee
\bel{CGO2}u_2(x',x_3)=e^{-\rho \theta\cdot x'}\left(\psi\left(\rho^{-\frac{1}{4}}x_3\right)b_{2,\rho}e^{i\rho x\cdot\eta}+w_{2,\rho}(x',x_3)\right),\quad x'\in\omega,\ x_3\in\R.\ee
Here $b_{j,\rho}\in \mathcal C^\infty(\overline{\Omega_1})$ and the remainder term $w_{j,\rho}\in H^1(\Omega_1)$ satisfies the decay property 
\bel{RCGO} \lim_{\rho\to+\infty}(\rho^{-1}\norm{w_{j,\rho}}_{H^1(\Omega_1)}+\norm{w_{j,\rho}}_{L^2(\Omega_1)})=0.\ee
 This construction can be summarized in the following way.
\begin{theorem}\label{t4} For $j=1,2$ and  for all $\rho>\rho_2$, with $\rho_2$ the constant of Proposition \ref{p2}, the equations  $\Delta_{A_1} u_1+q_1u_1=0$, $\Delta_{A_2} u_2+\overline{q_2}u_2=0$,  admit respectively a solution $u_j\in H^1(\Omega_1)$ of the form \eqref{CGO1}-\eqref{CGO2} with $w_{j,\rho}$ satisfying the decay property \eqref{RCGO}.\end{theorem}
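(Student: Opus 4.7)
The plan is to follow the classical CGO scheme adapted to the unbounded cylinder $\Omega_1$: conjugate by the exponential weight, fix the amplitude $b_{j,\rho}$ via a transport equation, and construct the remainder $w_{j,\rho}$ by duality from the Carleman estimate of Proposition \ref{p2}. The specific difficulties are the unboundedness of $\Omega_1$ in $x_3$, which forces the cutoff $\psi(\rho^{-1/4}x_3)$ in the ansatz in order to keep the oscillating factor $e^{i\rho x\cdot\eta}$ square-integrable along the axis, and the mere boundedness of $A_j$, which requires a $\rho$-dependent mollification in the transport step.

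Under the substitutions $u_1=e^{\rho\theta\cdot x'}v_1$ and $u_2=e^{-\rho\theta\cdot x'}v_2$, the equations $\Delta_{A_1}u_1+q_1u_1=0$ and $\Delta_{A_2}u_2+\overline{q_2}u_2=0$ become $P_{A_1,q_1,+}v_1=0$ and $P_{A_2,\overline{q_2},-}v_2=0$ in $\Omega_1$. Writing $v_j=a_j+w_{j,\rho}$ with the prescribed oscillating parts and expanding, the three orthogonality relations \eqref{orth} cancel the $O(\rho^2)$ contributions, while the $O(\rho)$ obstruction reduces to the transport equation
\begin{equation*}
\alpha_\pm\cdot\nabla b_{j,\rho}+iA_j\cdot\alpha_\pm\,b_{j,\rho}=0,\qquad \alpha_\pm:=\pm(\theta,0)+i\eta.
\end{equation*}
Since $\alpha_\pm\cdot\alpha_\pm=0$, in orthonormal coordinates $(y_1,y_2,y_3)$ with $y_1:=\pm(\theta,0)\cdot x$ and $y_2:=\eta\cdot x$ this becomes $(\partial_{y_1}+i\partial_{y_2})b_{j,\rho}=-iA_j\cdot\alpha_\pm\,b_{j,\rho}$, a $\bar\partial$-equation parametrized by $y_3$. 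I would solve it globally on $\R^3$ via the Cauchy transform applied to a mollification of $A_j$ at scale $\rho^{-\sigma}$, $\sigma\in(0,1)$; the hypothesis $A_j\in L^\infty(\Omega_1)^3\cap L^2(\Omega_1)^3$ ensures that $b_{j,\rho}$ is uniformly bounded in $\rho$ while the transport residual and the mollification error tend to $0$ in $L^2$ as $\rho\to+\infty$.

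With $b_{j,\rho}$ fixed, the residual $r_{j,\rho}:=-P_{A_j,q_j,\pm}a_j$ (with $\overline{q_2}$ in place of $q_2$ for $u_2$) splits into three kinds of terms: the mollified transport residual multiplied by $\psi$, which is small by the choice of $\sigma$; terms produced by derivatives of the cutoff $\psi(\rho^{-1/4}x_3)$, which are supported where $|x_3|\sim\rho^{1/4}$ and carry factors $\rho^{-1/4}$ or $\rho^{-1/2}$; and lower-order contributions coming from $\Delta b_{j,\rho}$, $q_jb_{j,\rho}$, $|A_j|^2b_{j,\rho}$ and $i\,\textrm{div}(A_j)b_{j,\rho}$ (interpreted in the distributional sense of Section 1.1), each multiplied by $\psi$ and $e^{i\rho x\cdot\eta}$. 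A careful bookkeeping then gives $\norm{r_{j,\rho}}_{H^{-1}_\rho(\R^3)}=o(\rho^{-1})$. Applying Proposition \ref{p2} to the formal adjoint of $P_{A_j,q_j,\pm}$ (which satisfies an analogous Carleman estimate with opposite sign), Hahn-Banach and the Riesz representation produce $w_{j,\rho}\in H^1(\Omega_1)$ with $P_{A_j,q_j,\pm}w_{j,\rho}=r_{j,\rho}$ and $\rho^{-1}\norm{w_{j,\rho}}_{H^1_\rho(\R^3)}\leq C\rho\,\norm{r_{j,\rho}}_{H^{-1}_\rho(\R^3)}$, from which the decay property \eqref{RCGO} follows at once.

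The main obstacle is the simultaneous matching of three $\rho$-scales --- the cutoff scale $\rho^{-1/4}$, the mollification scale $\rho^{-\sigma}$, and the $o(\rho^{-1})$ decay required to close the Carleman solvability --- while preserving square-integrability along the unbounded direction at every step of the construction. The hypothesis $A_j\in L^\infty\cap L^2$ is critical for this balance: the $L^\infty$ bound is used to handle the Carleman estimate and the lower-order terms, while the $L^2$ integrability is what allows the Cauchy-transform construction of the amplitude $b_{j,\rho}$ to yield a globally bounded function on $\R^3$ and makes the interaction of the oscillatory factor $e^{i\rho x\cdot\eta}$ with the magnetic potential $A_j$ controllable in $H^{-1}_\rho(\R^3)$.
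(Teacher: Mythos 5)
Your strategy matches the paper's: mollify the magnetic potential at scale $\rho^{-1/4}$, build $b_{j,\rho}$ by a Cauchy transform in the $(\tilde\theta,\eta)$-plane, then solve for $w_{j,\rho}$ by duality from Proposition~\ref{p2}. However, the quantitative bookkeeping in the last step is wrong in a way that, as written, fails to close the argument.

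First, your claim $\norm{r_{j,\rho}}_{H^{-1}_\rho(\R^3)}=o(\rho^{-1})$ is not achievable. The dominant cutoff contribution is not one carrying a factor $\rho^{-1/4}$ or $\rho^{-1/2}$; it is the cross term between $\partial_{x_3}^2$ acting on the oscillatory phase $e^{i\rho\eta_3 x_3}$ and on $\psi(\rho^{-1/4}x_3)$, which produces the term $2i\eta_3\rho^{3/4}\psi'(\rho^{-1/4}x_3)b_{1,\rho}$ appearing in \eqref{t4b}. Its $L^2(B'_{R+1}\times\R)$ norm is of order $\rho^{3/4}\cdot\rho^{1/8}$ (the $\rho^{1/8}$ coming from $\norm{\psi'(\rho^{-1/4}\cdot)}_{L^2(\R)}$), hence its $H^{-1}_\rho$ norm is of order $\rho^{-1/8}$. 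This is $o(1)$ but certainly not $o(\rho^{-1})$, and no choice of mollification scale $\sigma$ can fix it, since this term is independent of the amplitude construction. Likewise the transport residual, once the gain from $L^2$ to $H^{-1}_\rho$ is accounted for, is of size $\norm{A_j-A_{j,\rho}}_{L^2}$, which for $A_j\in L^\infty\cap L^2$ with no further smoothness only tends to $0$ with no rate.

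Second, $o(\rho^{-1})$ is in fact not needed, and the place where you introduced the compensating error is the duality bound. Proposition~\ref{p2} reads $\rho^{-1}\norm{v}_{H^1_\rho}\leq C\norm{P_{A,q,\pm}v}_{H^{-1}_\rho}$, i.e.\ $\norm{v}_{H^1_\rho}\leq C\rho\norm{P_{A,q,\pm}v}_{H^{-1}_\rho}$; Hahn--Banach then produces $w_{j,\rho}$ with $\norm{w_{j,\rho}}_{H^1_\rho}\leq C\rho\norm{r_{j,\rho}}_{H^{-1}_\rho}$, equivalently $\rho^{-1}\norm{w_{j,\rho}}_{H^1_\rho}\leq C\norm{r_{j,\rho}}_{H^{-1}_\rho}$. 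You wrote $\rho^{-1}\norm{w_{j,\rho}}_{H^1_\rho}\leq C\rho\norm{r_{j,\rho}}_{H^{-1}_\rho}$, with a spurious extra factor of $\rho$, and that is exactly why your argument appears to demand the unattainable $o(\rho^{-1})$. Since $\rho^{-1}\norm{w}_{H^1(\Omega_1)}+\norm{w}_{L^2(\Omega_1)}\leq 2\rho^{-1}\norm{w}_{H^1_\rho(\R^3)}$, the decay property \eqref{RCGO} follows from the achievable bound $\norm{r_{j,\rho}}_{H^{-1}_\rho}\leq C(\norm{A_j-A_{j,\rho}}_{L^2}+\rho^{-1/8})=o(1)$, which is precisely what the paper proves (cf.\ \eqref{t4c}, \eqref{t4e}, \eqref{tutu}). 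Replacing your two inaccurate estimates with the correct ones restores the argument and recovers the paper's proof.
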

\begin{rem} \emph{Like in \cite{Ki4}, we can not  consider  CGO solutions similar to those on bounded domains since they will not be square integrable in $\Omega_1$. In a similar way to \cite{Ki4}, we consider this new expression of the CGO solutions with  principal parts that propagates in some suitable way along the axis of $\Omega_1$ with respect to the large parameter $\rho$. Comparing to \cite{Ki4} we need also to consider here  the presence of non-compactly supported magnetic potentials. This part of our construction, will be precised in the next subsection.}\end{rem} 
In order to consider suitable solutions taking the form \eqref{CGO1}-\eqref{CGO2}, we need to define first the expressions $b_{j,\rho}$ in the principal part, which will be solutions of some $\overline{\partial}$ type equation involving the magnetic potential $A_j$. Then, we will consider the remainder terms by using the Carleman estimates of the preceding section.
\subsection{Principal parts of the CGO}
In this subsection we will introduce the form of the principal part $b_{j,\rho}$, $j=1,2$, of our CGO solutions given by \eqref{CGO1}-\eqref{CGO2}. For this purpose, we assume that $b_{j,\rho}$, $j=1,2$, is an approximation of a solution $b_j$ of the  equations
\bel{trans}2(\tilde{\theta}+i\eta)\cdot \nabla b_1+2i[(\tilde{\theta}+i\eta)\cdot A_1(x)]b_1=0,\quad 2(-\tilde{\theta}+i\eta)\cdot \nabla b_2+2i[(-\tilde{\theta}+i\eta)\cdot A_2(x)]b_2=0,\quad x\in\Omega_1,\ee
here $\tilde{\theta}:=(\theta,0)\in\mathbb S^2$. This approach, also considered in \cite{BKS1,Ki4,KU,Sa1},  makes it possible to reduce the regularity assumption on the first order coefficients $A_j$. Indeed, by replacing  the functions $b_1$, $b_2$, whose regularity depends on the one  of the coefficients $A_1$ and $A_2$, with their approximation $b_{1,\rho}$, $b_{2,\rho}$,  we can weaken the regularity  assumption imposed on the  coefficients $A_j$, $j=1,2$, from $W^{2,\infty}(\Omega_1)^3$ to $ L^\infty(\Omega_1)^3$. Moreover, this approach requires also no information about the domain $\Omega$ and the  coefficients $A_j$, $j=1,2$,  on $\pd\Omega$. More precisely, if in our construction  we use the expression $b_j$ instead of  $b_{j,\rho}$, $j=1,2$,  then, following our strategy, we can prove Theorem  \ref{t1}  only for  specific  domains and for coefficients $A_1,A_2\in W^{2,\infty}(\Omega)^3\cap L^1(\Omega)$  satisfying
$$\partial_x^\alpha A_1(x)=\partial_x^\alpha A_2(x),\quad x\in\pd\Omega,\ \alpha\in\mathbb N^3,\ |\alpha|\leq1,$$
where in our case we make no assumption on the shape of $\Omega$ (except the condition $\Omega\subset\omega\times\R$) and about $A_j$ at $\pd\Omega$.

Let us also mention that comparing to results stated on bounded domains (e.g. \cite{FKSU,KLU,KU}), the magnetic potentials $A_1$, $A_2$ can not be extended to  compactly supported functions of $\R^3$. However, we can extend them into functions of $\R^3$ supported in infinite cylinder. Combining this with the fact that $A_j\in L^2(\Omega_1)^3$, we will prove how we can build CGO solutions having properties similar to the one of \cite{KU}.

In order to define $b_{j,\rho}$, $j=1,2$, we start by introducing a suitable approximation of the coefficients $A_j$, $j=1,2$. For all $r>0$, we define $B_r:=\{x\in\R^{3}:\ |x|<r\}$ and $B_r':=\{x'\in\R^{2}:\ |x'|<r\}$. We fix $\chi\in\mathcal C^\infty_0(\R^{3})$ such that $\chi\geq0$, $\int_{\R^{3}}\chi(x)dx=1$, supp$(\chi)\subset B_1$, and we define $\chi_\rho$ by
$\chi_\rho(x)=\rho^{{3\over 4}}\chi(\rho^{{1\over4}}x)$. Then, for $j=1,2$, we fix
$$A_{j,\rho}(x):=\int_{\R^{3}}\chi_\rho(x-y)A_j(y)dy.$$
Here, we assume that, for $j=1,2$, $A_j=0$ on $\R^{3}\setminus \Omega_1$.
For $j=1,2$, since $A_j\in L^2(\R^{3})^3$, by density one can check that
\bel{a1a}\lim_{\rho\to+\infty}\norm{A_{j,\rho}-A_j}_{L^2(\R^{3})}=0,\ee
and, using the fact that $A_j\in L^\infty(\R^{3})^3$, we deduce the estimates
\bel{a1b}\norm{A_{j,\rho}}_{H^k(\R^{3})}+\norm{A_{j,\rho}}_{W^{k,\infty}(\R^{3})}\leq C_k\rho ^{{k\over 4}},\ee
with $C_k$ independent of $\rho$. We remark that $$A_{\rho}(x):=\int_{\R^{3}}\chi_\rho(x-y)A(y)dy=A_{1,\rho}(x)-A_{2,\rho}(x),$$
with $A=A_1-A_2$. Recall that, for $j=1,2$, supp$(A_{j,\rho})\subset \overline{\Omega_1}+B_1:=\{x+y:\ x\in \overline{\Omega_1}, y\in B_1\}$. Moreover, fixing  $R:=\underset{x'\in\overline{\omega}}{\sup}|x'|$, $R_1:=2\sqrt{2}(R+2+\frac{R+2}{|\xi'|})$ and assuming that $|(s_1,s_2)|\geq R_1$, we find $|s_1|\geq\frac{R_1}{\sqrt{2}}$ or $|s_2|\geq\frac{R_1}{\sqrt{2}}$. In addition, since $\theta\cdot\xi'=0$, we get
$$|(s_1,s_2)|\geq R_1\Longrightarrow |s_1\theta+s_2\xi'|=|(s_1,s_2|\xi'|)|\geq \max(|s_1|,|s_2||\xi'|)>2R+4$$
and, for all $x=(x',x_3)\in B_{R+1}'\times\R$, we get
$$|(s_1,s_2)|\geq R_1\Longrightarrow |x'-s_1\theta-s_2\xi'|\geq |s_1\theta+s_2\xi'|- |x'|\geq R+3.$$
Thus, for all $x=(x',x_3)\in B_{R+1}'\times\R$, the function
$$(s_1,s_2)\mapsto A_{j,\rho}(s_1\tilde{\theta}+s_2\eta+x)$$
will be supported in $B'_{R_1}$. Thus,  we can define
\bel{conde1} \begin{aligned}&\Phi_{1,\rho}(x):=\frac{-i}{2\pi} \int_{\R^2} \frac{(\tilde{\theta}+i\eta)\cdot A_{1,\rho}(x-s_1\tilde{\theta}-s_2\eta)}{s_1+is_2}ds_1ds_2,\\
 &\Phi_{2,\rho}(x):=\frac{-i}{2\pi} \int_{\R^2} \frac{(-\tilde{\theta}+i\eta)\cdot A_{2,\rho}(x+s_1\tilde{\theta}-s_2\eta)}{s_1+is_2}ds_1ds_2.\end{aligned}\ee
Fixing
\bel{conde2} b_{1,\rho}(x)=e^{\Phi_{1,\rho}(x)},\quad b_{2,\rho}(x)=e^{\Phi_{2,\rho}(x)},\ee
we obtain 
\bel{tt}(\tilde{\theta}+i\eta)\cdot \nabla b_{1,\rho}+i[(\tilde{\theta}+i\eta)\cdot A_{1,\rho}(x)]b_{1,\rho}=0,\quad (-\tilde{\theta}+i\eta)\cdot \nabla b_{2,\rho}+i[(-\tilde{\theta}+i\eta)\cdot A_{2,\rho}(x)]b_{2,\rho}=0,\quad x\in\Omega_1.\ee
Here, even if $A_{j,\rho}$, $j=1,2$, is not compactly supported, one can use the fact that the functions
$$(s_1,s_2)\mapsto A_{j,\rho}(s_1\tilde{\theta}+s_2\eta+s_3\xi),\quad s_3\in\R,$$
are compactly supported to deduce \eqref{tt}. Moreover, using the fact that
$$(x-s_1\tilde{\theta}-s_2\eta)\notin\textrm{supp}(A_{j,\rho}),\quad x\in B'_{R+1}\times\R,\ |(s_1,s_2)|>R_1,\ j=1,2,$$
 for all $x\in B_{R+1}'\times\R,\ j=1,2,$ we deduce that
$$\begin{aligned}|\Phi_{j,\rho}(x)|&\leq \frac{1}{2\pi} \int_{|(s_1,s_2)|\leq R_1} \frac{|A_{j,\rho}(x-s_1\tilde{\theta}-s_2\eta)|}{|s_1+is_2|}ds_1ds_2\\
\ &\leq \frac{\norm{A_{j,\rho}}_{L^\infty(\R^3)}}{2\pi} \int_{|(s_1,s_2)|\leq R_1} \frac{1}{|(s_1,s_2)|}ds_1ds_2\\
\ &\leq C,\end{aligned}$$
with $C$ independent of $\rho$. This proves that
$$\norm{\Phi_{j,\rho}}_{L^\infty(B_{R+1}'\times\R)}\leq C.$$
In the same way, we can prove that
\bel{tt1}\norm{\Phi_{j,\rho}}_{W^{k,\infty}(B_{R+1}'\times\R)}\leq C_k\rho^{\frac{k}{4}},\quad k\geq0,\ee
with $C_k$ independent of $\rho$. According to this estimate, we have
\bel{cond31}\norm{b_{j,\rho}}_{W^{k,\infty}( B_{R+1}'\times\R)}\leq C_k\rho^{{k\over4}},\quad k\geq0.\ee
Moreover, conditions  \eqref{tt}, \eqref{cond31} and the fact that 
$$[\textrm{supp}(A_j)\cup \textrm{supp}(A_{j,\rho})]\subset \overline{\Omega_1}+B_1\subset B'_{R+1}\times\R,\quad j=1,2,$$
 imply that
\bel{cond5} \begin{aligned}\norm{(\tilde{\theta}+i\eta)\cdot \nabla b_{1,\rho}+i[(\tilde{\theta}+i\eta)\cdot A_{1}]b_{1,\rho}}_{L^2(B_{R+1}'\times\R)}&=\norm{[i[(\tilde{\theta}+i\eta)\cdot (A_{1}-A_{1,\rho})]]b_{1,\rho}}_{L^2(B_{R+1}'\times\R)}\\
\ &\leq C\norm{A_{1}-A_{1,\rho}}_{L^2(\R^3)},\end{aligned}\ee
\bel{cond6} \begin{aligned}\norm{(-\tilde{\theta}+i\eta)\cdot \nabla b_{2,\rho}+i[(-\tilde{\theta}+i\eta)\cdot A_2]b_{2,\rho}}_{L^2(B_{R+1}'\times\R)}&=\norm{[i[(\tilde{\theta}+i\eta)\cdot (A_{2}-A_{2,\rho})]]b_{2,\rho}}_{L^2(B_{R+1}'\times\R)}\\
\ &\leq C\norm{A_{2}-A_{2,\rho}}_{L^2(\R^3)},\end{aligned}\ee
 with $C>0$ independent of $\rho$.
Using these properties of the expressions $b_{j,\rho}$, $j=1,2$, we will complete the construction of the solutions $u_j$ of the form \eqref{CGO1}-\eqref{CGO2}.

\subsection{Remainder term of the CGO solutions}
In this subsection we will construct the remainder term $w_{j,\rho}$, $j=1,2$, appearing in \eqref{CGO1}-\eqref{CGO2} and satisfying the decay property \eqref{RCGO}. For this purpose, we will combine the Carleman estimate \eqref{p2a} with the properties of the expressions $b_{j,\rho}$, $j=1,2$, in order to complete the construction of these solutions. In this subsection, we assume that $\rho>\rho_2$ with $\rho_2$ the constant introduced in Proposition \ref{p2}. The proof for the existence of the remainder term $w_{1,\rho}$ and $w_{2,\rho}$ being similar, we will only show the existence of $w_{1,\rho}$. Let us first remark that $w_{1,\rho}$ should be a solution of the equation
\bel{t4a} P_{A_1,q_1,+}w=e^{-\rho \theta\cdot x'}(\Delta_{A_1}+q_1)e^{\rho \theta\cdot x'}w=e^{i\rho \eta\cdot x}F_{1,\rho}(x),\quad x\in\Omega_1,\ee
with $F_{1,\rho}$ defined, for all $x=(x',x_3)\in B'_{R+1}\times\R$ (we recall that $B_r'=\{x'\in\R^{2}:\ |x'|<r\}$ and  
$R=\underset{x'\in\overline{\omega}}{\sup}|x'|$), by
\bel{t4b}\begin{aligned}F_{1,\rho}(x)&=-e^{-\rho \theta\cdot x'-i\rho \eta\cdot x}(\Delta_{A_1}+q_1)\left[e^{\rho \theta\cdot x'+i\rho \eta\cdot x}\psi\left(\rho^{-\frac{1}{4}}x_3\right)b_{1,\rho}e^{-i\xi\cdot x}\right]\\
&=-\left((-|\xi|^2+\textrm{div}(A_1)+q_1)\psi\left(\rho^{-\frac{1}{4}}x_3\right)+2i\eta_3\rho^{\frac{3}{4}}\psi'\left(\rho^{-\frac{1}{4}}x_3\right)-2i\xi_3\rho^{-\frac{1}{4}}\psi'\left(\rho^{-\frac{1}{4}}x_3\right)\right)b_{1,\rho}e^{-i\xi\cdot x}\\
&\ \ \  -\left[\rho^{-\frac{1}{2}}\psi''\left(\rho^{-\frac{1}{4}}x_3\right)b_{1,\rho}+ 2\pd_{x_3}b_{1,\rho}\rho^{-\frac{1}{4}}\psi'\left(\rho^{-\frac{1}{4}}x_3\right)-i2\xi\cdot\nabla b_{1,\rho}\psi\left(\rho^{-\frac{1}{4}}x_3\right)\right]e^{-i\xi\cdot x}\\
&\ \ \ -2\rho[(\tilde{\theta}+i\eta)\cdot \nabla b_{1,\rho}+i[(\tilde{\theta}+i\eta)\cdot A_{1}]b_{1,\rho}]\psi\left(\rho^{-\frac{1}{4}}x_3\right)e^{-i\xi\cdot x}.\end{aligned}\ee
Here we consider $A_1$ as an element of $L^\infty(\R^3)^3\cap L^2(\R^3)^3$ satisfying $A_1=0$ on $\R^3\setminus\Omega_1$. We fix $\phi\in\mathcal C^\infty_0(B'_{R+1};[0,1])$ satisfying $\phi=1$ on $B'_{R+\frac{1}{2}}$, and we define $$G_\rho(x',x_3):=\phi(x')F_{1,\rho}(x',x_3),\quad x'\in\R^2,\ x_3\in\R,$$
$$K_\rho(x):=G_\rho(x)-\phi(x')\psi\left(\rho^{-\frac{1}{4}}x_3\right)\textrm{div}(A_1)b_{1,\rho}e^{-i\xi\cdot x} ,\quad x'\in\R^2,\ x_3\in\R,\ x=(x',x_3).$$
It is clear that $K_\rho\in L^2(\R^3)$ and in view of \eqref{cond31}-\eqref{cond6} and the fact that, using a change of variable, we find
$$\norm{\chi\left(\rho^{-\frac{1}{4}}x_3\right)}_{L^2(B'_{R+1}\times\R)}+\norm{\chi'\left(\rho^{-\frac{1}{4}}x_3\right)}_{L^2(B'_{R+1}\times\R)}+\norm{\chi''\left(\rho^{-\frac{1}{4}}x_3\right)}_{L^2(B'_{R+1}\times\R)}\leq C\rho^{\frac{1}{8}},$$
we deduce that
\bel{t4h}\norm{K_\rho}_{H^{-1}_\rho(\R^3)}\leq \rho^{-1}\norm{K_\rho}_{L^2(\R^3)}=\rho^{-1}\norm{K_\rho}_{L^2(B'_{R+1}\times\R)}\leq C(\norm{A_1-A_{1,\rho}}_{L^2(\R^3)^3}+\rho^{-\frac{1}{8}}).\ee
In the same way, since supp$(\textrm{div}(A))\subset \overline{\omega}\times\R\subset B'_{R+\frac{1}{2}}\times\R$, we have
$$\phi(x')\psi\left(\rho^{-\frac{1}{4}}x_3\right)\textrm{div}(A_1)b_{1,\rho}=\psi\left(\rho^{-\frac{1}{4}}x_3\right)\textrm{div}(A_1)b_{1,\rho}.$$
Moreover, fixing
$$c_{1,\rho}(x):=\psi\left(\rho^{-\frac{1}{4}}x_3\right)b_{1,\rho}(x),\quad x=(x',x_3)\in\R^2\times\R,$$
for any $h\in H^1_\rho(\R^3)$, we obtain
$$\begin{aligned}&\abs{\left\langle  \textrm{div}(A_1)c_{1,\rho},h\right\rangle_{H^{-1}_\rho(\R^3), H^1_\rho(\R^3)}}\\
&\leq \abs{\left\langle  A_1\cdot \nabla c_{1,\rho},h\right\rangle_{L^2(\R^3)}}+\abs{\left\langle   c_{1,\rho},A_1\cdot \nabla h\right\rangle_{L^2(\R^3)}}\\
&\leq \abs{\left\langle  A_1\cdot \nabla c_{1,\rho},h\right\rangle_{L^2(\R^3)}}+\abs{\left\langle   c_{1,\rho},(A_1-A_{1,\rho})\cdot \nabla h\right\rangle_{L^2(\R^3)}}+\abs{\left\langle   c_{1,\rho},A_{1,\rho}\cdot \nabla h\right\rangle_{L^2(\R^3)}}\\ 
&\leq \left(\norm{c_{1,\rho}}_{W^{1,\infty} (\Omega_1)}\norm{A_1}_{L^2(\Omega_1)^3}\rho^{-1}+\norm{c_{1,\rho}}_{L^\infty (B'_{R+1}\times\R)}\norm{A_1-A_{1,\rho}}_{L^2(\R^3)^3}\right)\norm{h}_{H^1_\rho(\R^3)}+\abs{\left\langle   \textrm{div}({c_{1,\rho}A_{1,\rho}}), h\right\rangle_{L^2(\R^3)}}\\
&\leq \left(2\norm{c_{1,\rho}}_{W^{1,\infty} (B'_{R+1}\times\R)}[\norm{A_1}_{L^2(\Omega_1)^3}+\norm{A_{1,\rho}}_{H^1(\R^3)^3}]\rho^{-1}+\norm{c_{1,\rho}}_{L^\infty (B'_{R+1}\times\R)}\norm{A_1-A_{1,\rho}}_{L^2(\R^3)^3}\right)\norm{h}_{H^1_\rho(\R^3)}.\end{aligned}$$
Here we use the fact that supp$(A_{1,\rho})\subset \Omega_1+B_1\subset B'_{R+1}\times\R$. Combining this with \eqref{a1b} and \eqref{cond31}, we find
$$\abs{\left\langle  \textrm{div}(A_1)c_{1,\rho},h\right\rangle_{H^{-1}_\rho(\R^3), H^1_\rho(\R^3)}}\leq C(\rho^{-\frac{3}{4}}+\norm{A_1-A_{1,\rho}}_{L^2(\R^3)^3})\norm{h}_{H^1_\rho(\R^3)}$$
and it follows
$$\norm{\psi\left(\rho^{-\frac{1}{4}}x_3\right)\textrm{div}(A_1)b_{1,\rho}}_{H^{-1}_\rho(\R^3)}\leq C(\rho^{-\frac{3}{4}}+\norm{A_1-A_{1,\rho}}_{L^2(\R^3)^3}).$$
Then, \eqref{t4h} implies
\bel{t4c} \norm{G_\rho}_{H^{-1}_\rho(\R^3)}\leq C(\norm{A_{1}-A_{1,\rho}}_{L^2(\R^3)^3}+\rho^{-\frac{1}{8}}).\ee
From now on, combining \eqref{p2a} with \eqref{t4c}, we will complete the construction of the remainder term $w_{1,\rho}$ by using a classical duality argument. More precisely, applying \eqref{p2a}, we consider the linear form 
$T_\rho$ defined on $\mathcal Q:=\{P_{A_1,\overline{q_1},-}w: w\in\mathcal C^\infty_0(\Omega_1)\}$ by
$$T_\rho(P_{A_1,\overline{q_1},-}v):=\overline{\left\langle G_{\rho}, e^{-i\rho \eta\cdot x}v\right\rangle_{H^{-1}_\rho(\R^3), H^1_\rho(\R^3)}},\quad v\in\mathcal C^\infty_0(\Omega_1).$$
Here and from now on we define the duality bracket $\left\langle \cdot,\cdot\right\rangle_{H^{-1}_\rho(\R^3), H^1_\rho(\R^3)}$ in the complex sense, which means that 
$$\left\langle v,w\right\rangle_{H^{-1}_\rho(\R^3), H^1_\rho(\R^3)}=\left\langle v,w\right\rangle_{L^2(\R^3)}=\int_{\R^3}v\overline{w}dx,\quad v\in L^2(\R^3),\ w\in H^1(\R^3).$$
Applying again \eqref{p2a}, for all $v\in\mathcal C^\infty_0(\Omega_1)$, we obtain
$$\begin{aligned}|T_\rho(P_{A_1,\overline{q_1},-}v)|&\leq \norm{G_{\rho}}_{H^{-1}_\rho(\R^3)}\norm{e^{-i\rho \eta\cdot x}v}_{H^1_\rho(\R^3)}\\
\ &\leq 2\rho\norm{G_{\rho}}_{H^{-1}_\rho(\R^3)}\rho^{-1}\norm{v}_{H^1_\rho(\R^3)}\\
\ &\leq C\rho\norm{G_{\rho}}_{H^{-1}_\rho(\R^3)}\norm{P_{A_1,\overline{q_1},-}v}_{H^{-1}_\rho(\R^3)},\end{aligned}$$
with $C>0$ independent of $\rho$. Thus, applying the Hahn-Banach theorem, we deduce that $T_\rho$ admits an extension as a continuous linear form on ${H^{-1}_\rho(\R^3)}$ whose norm will be upper bounded by $C\rho\norm{G_{\rho}}_{H^{-1}_\rho(\R^3)}$. Therefore, there exists $w_{1,\rho}\in H^1_\rho(\R^3)$ such that
\bel{t4d}\left\langle P_{A_1,\overline{q_1},-}v, w_{1,\rho}\right\rangle_{H^{-1}_\rho(\R^3), H^1_\rho(\R^3)}=T_\rho(P_{A_1,\overline{q_1},-}v)=\overline{\left\langle G_{\rho}, e^{-i\rho \eta\cdot x}v\right\rangle_{H^{-1}_\rho(\R^3), H^1_\rho(\R^3)}},\quad v\in\mathcal C^\infty_0(\Omega_1),\ee
\bel{t4e}\norm{w_{1,\rho}}_{H^1_\rho(\R^3)}\leq C\rho\norm{G_{\rho}}_{H^{-1}_\rho(\R^3)}.\ee
From \eqref{t4d} and the fact that, for all $x\in\Omega_1$, $G_\rho(x)=F_{1,\rho}(x)$, we obtain
$$\begin{aligned}\left\langle  P_{A_1,q_1,+}w_{1,\rho},v\right\rangle_{D'(\Omega_1), \mathcal C^\infty_0(\Omega_1)}&=\overline{\left\langle P_{A_1,\overline{q_1},-}v, w_{1,\rho}\right\rangle_{H^{-1}_\rho(\R^3), H^1_\rho(\R^3)}}\\
\ &=\left\langle G_{\rho}, e^{-i\rho \eta\cdot x}v \right\rangle_{H^{-1}_\rho(\R^3), H^1_\rho(\R^3)}\\
\ &=\left\langle e^{i\rho \eta\cdot x}F_{1,\rho}, v \right\rangle_{D'(\Omega_1), \mathcal C^\infty_0(\Omega_1)}.\end{aligned}$$
It follows that $w_{1,\rho}$ solves 
$P_{A_1,q_1,+}w_{1,\rho}=e^{i\rho \eta\cdot x}F_{1,\rho}$ in $\Omega_1$ and $u_1$ given by \eqref{CGO1} is a solution of $\Delta_{A_1}u+q_1u=0$ in $\Omega_1$ lying in $H^1(\Omega_1)$. In addition, from \eqref{t4c} and \eqref{t4e}, we deduce that
\bel{tutu}\rho^{-1}\norm{w_{1,\rho}}_{H^1(\Omega_1)}+\norm{w_{1,\rho}}_{L^2(\Omega_1)}\leq 2\rho^{-1}\norm{w_{1,\rho}}_{H^1_\rho(\R^3)}\leq C(\norm{A_{1}-A_{1,\rho}}_{L^2(\R^3)^3}+\rho^{-\frac{1}{8}})\ee
which implies the decay property \eqref{RCGO}. This completes the proof of Theorem \ref{t4}.

\section{Uniqueness result}
In this section we will use the result of the preceding section in order to complete the proof of Theorem \ref{t1}. Namely under the assumption of Theorem \ref{t1}, we will show that \eqref{t1a} implies that $dA_1=dA_2$. Then, assuming $A=A_1-A_2\in \mathcal C(\R^3)$, we will prove that  $q_1=q_2$. For $j=1,2$, we assume that $A_j\in L^\infty(\R^3)^3\cap L^2(\R^3)^3$ and $q_j\in L^\infty(\R^3;\mathbb C)$ with $A_j$ and $q_j$ extended by $0$ on $\R^3\setminus \Omega$. We use here the notation of the previous sections and we assume that $A=A_1-A_2\in L^1(\R^3)$. We start with the recovery of the magnetic field.

\subsection{Recovery of the magnetic field}
In this subsection we will prove that \eqref{t1a} implies that $dA_1=dA_2$.
Let us first remark that $A_\rho=A_{1,\rho}-A_{2,\rho}=\chi_\rho*A$ and, since $A\in L^1(\R^3)^3$, by density one can check that
\bel{t1c}\lim_{\rho\to+\infty}\norm{A_\rho-A}_{L^1(\R^3)}=0.\ee
For $j=1,2$, we fix $u_j\in H^{1}(\Omega_1)$  a solution of  $\Delta_{A_1} u_1+q_1u_1=0$, $\Delta_{A_2} u_2+\overline{q_2}u_2=0$ in $\Omega_1$ of the form \eqref{CGO1}-\eqref{CGO2} with $\rho>\rho_2$ and with $w_{j,\rho}$ satisfying \eqref{RCGO}. In view of \eqref{closed}, we can see that the restriction of $u_1$ (resp. $u_2$) to $\Omega$ is lying in $H^1(\Omega)$ and it solves the equation $\Delta_{A_1} u_1+q_1u_1=0$ (resp. $\Delta_{A_2} u_2+\overline{q_2}u_2=0$) in $\Omega$. From now on, we consider the restriction to $\Omega$ of these CGO solutions initially defined on $\Omega_1$.

In view of \eqref{t1a}, we can find $v_2\in H^1(\Omega)$ satisfying $\Delta_{A_2} v_2+q_2v_2=0$ with $\tau v_2=\tau u_1$ and $N_{A_1,q_1} u_1=N_{A_2,q_2} v_2$. Therefore, we have
$$\begin{aligned}0=\left\langle N_{A_1,q_1}u_1,\tau u_2\right\rangle-\left\langle N_{A_2,q_2}v_2,\tau u_2\right\rangle&=\left\langle N_{A_1,q_1}u_1,\tau u_2\right\rangle-\overline{\left\langle N_{A_2,\overline{q_2}}u_2,\tau v_2\right\rangle}\\
\ &=\left\langle N_{A_1,q_1}u_1,\tau u_2\right\rangle-\overline{\left\langle N_{A_2,\overline{q_2}}u_2,\tau u_1\right\rangle}\\
\ &=i\int_{\R^3}(A\cdot\nabla u_1)\overline{u_2}dx -i\int_{\R^3}u_1(\overline{A\cdot\nabla u_2})dx+\int_{\R^3}\tilde{q}u_1\overline{u_2}dx,\end{aligned}$$
where $\tilde{q}=|A_2|^2-|A_1|^2+q$, with $q=q_1-q_2$ extended by zero to $\R^3$. According to \eqref{RCGO}, \eqref{cond31} and the fact that $A\in L^1(\R^3)$, multiplying  this expression by $-i\rho^{-1}2^{-1}$ and sending $\rho\to+\infty$, we find
$$\begin{aligned}&\lim_{\rho\to+\infty}\int_{\R^3}(A\cdot(\tilde{\theta}+i\eta))\exp\left(\Phi_{1,\rho}+\overline{\Phi_{2,\rho}}\right)e^{-ix\cdot\xi}dx\\
&=\lim_{\rho\to+\infty}\int_{\R^3}\psi^2(\rho^{-\frac{1}{4}}x_3)(A\cdot(\tilde{\theta}+i\eta))\exp\left(\Phi_{1,\rho}+\overline{\Phi_{2,\rho}}\right)e^{-ix\cdot\xi}dx=0.\end{aligned}$$
Here we use \eqref{tt1} and the fact that by Lebesgue dominate convergence theorem
$$\lim_{\rho\to+\infty}\norm{A-\psi^2(\rho^{-\frac{1}{4}}x_3)A}_{L^1(\R^3)}=0.$$
Combining this with \eqref{tt1} and \eqref{t1c}, we obtain
$$\lim_{\rho\to+\infty}\int_{\R^3}(A_\rho\cdot(\tilde{\theta}+i\eta))\exp\left(\Phi_{1,\rho}+\overline{\Phi_{2,\rho}}\right)e^{-ix\cdot\xi}dx=0.$$
On the other hand, one can easily check that
$$\Phi_\rho=\Phi_{1,\rho}+\overline{\Phi_{2,\rho}}=\frac{-i}{2\pi}\int_{\R^2}\frac{(\tilde{\theta}+i\eta)\cdot A_{\rho}(x-s_1\tilde{\theta}-s_2\eta)}{s_1+is_2}ds_1ds_2.$$
and we deduce that
\bel{t1d}\lim_{\rho\to+\infty}\int_{\R^3}(A_\rho\cdot(\tilde{\theta}+i\eta))e^{\Phi_{\rho}}e^{-ix\cdot\xi}dx=0.\ee
Now let us consider the following intermediate result.
\begin{lemma} \label{l3} We have
\bel{l3a} \int_{\R^3}(A_\rho\cdot(\tilde{\theta}+i\eta))e^{\Phi_{\rho}}e^{-ix\cdot\xi}dx=(\tilde{\theta}+i\eta)\cdot\left(\int_{\R^3}A_\rho(x)e^{-ix\cdot\xi}dx\right)=(2\pi)^{\frac{3}{2}}(\tilde{\theta}+i\eta)\cdot\mathcal F(A_\rho)(\xi).\ee
\end{lemma}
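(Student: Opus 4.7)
The plan is to exploit the $\overline{\partial}$-type equation satisfied by $\Phi_\rho:=\Phi_{1,\rho}+\overline{\Phi_{2,\rho}}$ in the plane spanned by $\tilde{\theta}$ and $\eta$, combined with the orthogonality \eqref{orth}. From \eqref{tt} and its complex conjugate (the potentials $A_{j,\rho}$ being real-valued), I would first establish
\[
(\tilde{\theta}+i\eta)\cdot\nabla\Phi_\rho = -i(\tilde{\theta}+i\eta)\cdot A_\rho\quad\text{on }\R^3,
\]
whence $(\tilde{\theta}+i\eta)\cdot A_\rho\,e^{\Phi_\rho}= i(\tilde{\theta}+i\eta)\cdot\nabla e^{\Phi_\rho}$. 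Since the orthogonality \eqref{orth} gives $(\tilde{\theta}+i\eta)\cdot\xi=0$ and hence $(\tilde{\theta}+i\eta)\cdot\nabla e^{-ix\cdot\xi}=0$, the integrand can then be written as a divergence:
\[
(\tilde{\theta}+i\eta)\cdot A_\rho\,e^{\Phi_\rho}e^{-ix\cdot\xi}= i\,\textrm{div}\!\left((\tilde{\theta}+i\eta)\,e^{\Phi_\rho-ix\cdot\xi}\right).
\]

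Next, relying again on \eqref{orth}, I would introduce the orthonormal basis $\{\tilde{\theta},\eta,\xi/|\xi|\}$ of $\R^3$ together with the adapted coordinates $y_1:=x\cdot\tilde{\theta}$, $y_2:=x\cdot\eta$, $y_3:=x\cdot\xi/|\xi|$, so that $x\cdot\xi=y_3|\xi|$ and $(\tilde{\theta}+i\eta)\cdot\nabla=\partial_{y_1}+i\partial_{y_2}$. Setting $z:=y_1+iy_2$, the formula \eqref{conde1} realizes $\Phi_{1,\rho}(\cdot,y_3)$ and $\overline{\Phi_{2,\rho}}(\cdot,y_3)$, for each fixed $y_3$, as two-dimensional Cauchy transforms in the variable $z$ of $(\tilde{\theta}+i\eta)\cdot A_{1,\rho}(\cdot,y_3)$ and $(\tilde{\theta}+i\eta)\cdot A_{2,\rho}(\cdot,y_3)$, each compactly supported in the $(y_1,y_2)$-plane (here one uses that $\tilde{\theta}$ and the projection of $\eta$ on $\R^2\times\{0\}$ are linearly independent, a consequence of \eqref{orth} and $\xi'\neq 0$). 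A standard expansion of the Cauchy transform then yields
\[
\Phi_\rho(z,y_3) = \frac{-i}{2\pi z}\int_{\R^2}(\tilde{\theta}+i\eta)\cdot A_\rho(y_1,y_2,y_3)\,dy_1dy_2 + O(|z|^{-2})\quad\text{as }|z|\to\infty,
\]
with remainders controlled uniformly in $y_3$.

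Finally, for each fixed $y_3\in\R$, Stokes's theorem applied to the divergence expression on the disk $D_R$ of radius $R$ centered at the origin in the $(y_1,y_2)$-plane would give
\[
\int_{D_R}(\tilde{\theta}+i\eta)\cdot A_\rho\,e^{\Phi_\rho}e^{-ix\cdot\xi}\,dy_1 dy_2 = e^{-iy_3|\xi|}\int_{|z|=R}\!\left(e^{\Phi_\rho(\cdot,y_3)}-1\right)dz,
\]
the subtraction of $1$ being legitimate since $\int_{|z|=R}dz=0$. Letting $R\to\infty$, the left-hand side converges to $\int_{\R^2}(\tilde{\theta}+i\eta)\cdot A_\rho\,e^{\Phi_\rho}e^{-ix\cdot\xi}dy_1dy_2$ by dominated convergence (using $A_\rho\in L^1$ and $e^{\Phi_\rho}$ bounded), while the right-hand side converges, by the asymptotic expansion together with the residue $\int_{|z|=R}z^{-1}dz=2\pi i$, to $e^{-iy_3|\xi|}\int_{\R^2}(\tilde{\theta}+i\eta)\cdot A_\rho(\cdot,y_3)dy_1dy_2$. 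Integrating over $y_3\in\R$ via Fubini and recognizing the Fourier transform then yields \eqref{l3a}. The hardest step will be the rigorous justification of the $R\to\infty$ limit on the boundary integral: controlling the contribution of the $O(|z|^{-2})$ remainder as the support of $A_\rho(\cdot,y_3)$ shifts with $y_3$, and ensuring enough uniformity of the estimates in $y_3$ for the dominated convergence and Fubini steps to apply when integrating against $e^{-iy_3|\xi|}$.
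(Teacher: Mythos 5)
Your proposal is correct and follows essentially the same route as the paper's proof: pass to orthonormal coordinates adapted to $\{\tilde{\theta},\eta,\xi/|\xi|\}$, write $(\tilde{\theta}+i\eta)\cdot A_\rho\,e^{\Phi_\rho}e^{-ix\cdot\xi}$ as a $\overline{\partial}$-divergence in the $(y_1,y_2)$-plane, apply the divergence/Stokes theorem on disks $|x''|<r$, invoke the $O(|x''|^{-1})$ expansion of $\Phi_\rho$ furnished by its Cauchy-transform representation to pass to the limit $r\to\infty$ in the boundary term, and finally integrate out the axial variable by Fubini. The uniformity issue you flag at the end is a genuine subtlety that the paper's own write-up actually understates: because $\eta_3\neq 0$, the set $\textrm{supp}\,\tilde{A}_\rho(\cdot,x_*)$ does drift in the $x_2''$-direction as $x_*$ varies, so the paper's claim that $\textrm{supp}(\tilde{A}_\rho)\subset B'_{R_1}\times\R$ with $R_1$ independent of $x_*$ is not literally correct; nonetheless, as you suspect, this is harmless, since the boundary-term limit is taken for each fixed $y_3$ separately with a $y_3$-dependent threshold radius, while the Fubini step only requires $A_\rho\in L^1(\R^3)$ together with the global $L^\infty$ bound on $\Phi_\rho$.
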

\begin{proof} For $A_\rho$ compactly supported this result is well known and one can refer to \cite[Proposition 3.3]{KU} or \cite[Lemma 6.2]{Sa2} for its proof. Since here we deal with non-compactly supported magnetic potentials, the proof of the result will be required. From now on, to every $x\in\R^3$, we associate the coordinate $(x'',x_*)\in\R^2\times \R$, with $x''=(x'_1,x'_2)=(x\cdot\tilde{\theta},x\cdot\eta)$ and $x_*=\frac{x\cdot\xi}{|\xi|}$.
Recall that supp$(A_\rho)\subset B'_{R+1}\times\R$ and, fixing $\tilde{A}_\rho: (x'',x_*)\mapsto A_\rho(x)$, in a similar way to Subsection 3.1, we find 
$$\textrm{supp}(\tilde{A}_\rho)\subset (-R-1,R+1)\times\left(-\frac{(R+1)}{|\xi'|},\frac{R+1}{|\xi'|}\right)\times \R\subset B'_{R_1}\times\R.$$
 Thus, fixing $\tilde{\Phi}_\rho: (x'',x_*)\mapsto \Phi_\rho(x)$, for $|x''|>R_1$ we have
$$\tilde{\Phi}_\rho(x'',x_*)=\frac{-i}{2\pi}\int_{B'_{R_1}}\frac{(\tilde{\theta}+i\eta)\cdot\tilde{A}_\rho(y'',x_*)}{x'_1-y'_1+i(x'_2-y'_2)}dy''.$$
It follows that
$$|\tilde{\Phi}_\rho(x'',x_*)|\leq \frac{\norm{A_\rho}_{L^\infty(\R^3)}|B'_{R_1}|}{2\pi(|x''|-R_1)},\quad |x''|>R_1,\ x_*\in\R$$
and in particular, for every $x_*\in\R$, we get
\bel{l3b}|\tilde{\Phi}_\rho(x'',x_*)|=\underset{|x''|\to+\infty}{\mathcal O}\left(|x''|^{-1}\right).\ee
On the other hand, using the fact that
$$(\pd_{x'_1}+i\pd_{x'_2})\tilde{\Phi}_\rho(x'',x_*)=(\tilde{\theta}+i\eta)\nabla \Phi_\rho=-iA_\rho\cdot (\tilde{\theta}+i\eta)$$
and the fact that $A_\rho\in L^1(\R^3)$, by Fubini's theorem
we find
\bel{l3c}\int_{\R^3}(A_\rho\cdot(\tilde{\theta}+i\eta))e^{\Phi_{\rho}}e^{-ix\cdot\xi}dx=i\int_{\R}\left(\int_{\R^2}(\pd_{x'_1}+i\pd_{x'_2})e^{\tilde{\Phi}_\rho(x'',x_*)}dx''\right)e^{-ix_*|\xi|}dx_*.\ee
Moreover, for all $r>0$ fixing $n=(n_1,n_2)$ the outward unit normal vector to $B'_r$, we  have
$$\int_{|x''|<r}(\pd_{x'_1}+i\pd_{x'_2})e^{\tilde{\Phi}_\rho(x'',x_*)}dx''=\int_{|x''|=r}e^{\tilde{\Phi}_\rho(x'',x_*)}(n_1+in_2)d\sigma(x'').$$
Applying \eqref{l3b}, we find
$$e^{\tilde{\Phi}_\rho(x'',x_*)}=1+\tilde{\Phi}_\rho(x'',x_*)+\underset{|x''|\to+\infty}{\mathcal O}\left(|x''|^{-2}\right)$$
and it follows
\bel{l3d}\int_{|x''|<r}(\pd_{x'_1}+i\pd_{x'_2})e^{\tilde{\Phi}_\rho(x'',x_*)}dx''= \int_{|x''|=r}(n_1+in_2)d\sigma(x'')+\int_{|x''|=r}\tilde{\Phi}_\rho(x'',x_*)(n_1+in_2)d\sigma(x'')+\underset{r\to+\infty}{\mathcal O}\left(r^{-1}\right).\ee
In addition, we get
$$\int_{|x''|=r}(n_1+in_2)d\sigma(x'')=\int_{|x''|<r}(\pd_{x'_1}+i\pd_{x'_2})1dx''=0,$$
$$\int_{|x''|=r}\tilde{\Phi}_\rho(x'',x_*)(n_1+in_2)d\sigma(x'')=\int_{|x''|<r}(\pd_{x'_1}+i\pd_{x'_2})\tilde{\Phi}_\rho(x'',x_*)dx''$$
and sending $r\to+\infty$ in \eqref{l3d}, we obtain
$$\begin{aligned}\int_{\R^3}(A_\rho\cdot(\tilde{\theta}+i\eta))e^{\Phi_{\rho}}e^{-ix\cdot\xi}dx&=i\int_{\R}\left(\int_{\R^2}(\pd_{x'_1}+i\pd_{x'_2})\tilde{\Phi}_\rho(x'',x_*)dx''\right)e^{-ix_*|\xi|}dx_*\\
\ &=\int_{\R}\left(\int_{\R^2}(\tilde{\theta}+i\eta)\cdot\tilde{A}_\rho(x'',x_*)dx''\right)e^{-ix_*|\xi|}dx_*.\end{aligned}$$
From this identity, we deduce \eqref{l3a}.
\end{proof}
Combining \eqref{t1c} and \eqref{t1d}-\eqref{l3a}, we obtain
$$(\tilde{\theta}+i\eta)\cdot\mathcal F(A)(\xi)=\lim_{\rho\to+\infty}(\tilde{\theta}+i\eta)\cdot\mathcal F(A_\rho)(\xi)=0.$$
In the same way, replacing $\eta$ by $-\eta$ in our analysis, we find $(\tilde{\theta}-i\eta)\cdot\mathcal F(A)(\xi)=0$ and it follows $\tilde{\theta}\cdot\mathcal F(A)(\xi)=\eta\cdot\mathcal F(A)(\xi)=0$. Combining this with the fact that $(\tilde{\theta},\eta)$ is an orthonormal basis of $\xi^\bot=\{y\in\R^3:\ y\cdot\xi=0\}$, we find
\bel{t1e}\zeta \cdot\mathcal F(A)(\xi)=0,\quad \zeta\in\xi^\bot.\ee
Moreover, for $1\leq j<k\leq3$, fixing $\zeta=\xi_ke_j-\xi_j e_k,$
with $$e_j=(0,\ldots,0,\underbrace{1}_{\textrm{position\ } j},0,\ldots0),\quad e_k=(0,\ldots,0,\underbrace{1}_{\textrm{position } k},0,\ldots0),$$ \eqref{t1e} implies
\bel{t1f}\xi_k \mathcal F(a_j)(\xi)-\xi_j \mathcal F(a_k)(\xi)=0,\quad 1\leq j<k\leq3,\ee
where $A=(a_1,a_2,a_3)$. Recall that so far, we have proved \eqref{t1f} for any $\xi=(\xi',\xi)\in\R^2\times\R$ with $\xi'\neq0$ and $\xi_3\neq0$. Since $A\in L^1(\R^3)^3$ we can extend this identity to 	any $\xi\in\R^3$ by using the continuity of $\mathcal F(A)$. Then, we deduce from \eqref{t1f} that
$$-i\mathcal F(\pd_{x_k}a_j-\pd_{x_j}a_k)(\xi)=\xi_k \mathcal F(a_j)(\xi)-\xi_j \mathcal F(a_k)(\xi)=0,\quad 1\leq j<k\leq3,\ \xi\in\R^3.$$
This proves that in the sense of distribution we have $dA=0$ and $dA_1=dA_2$. 

\subsection{Recovery of the electric potential}

In this subsection we assume that \eqref{t1a}, $A\in L^\infty(\R^3)^3$, $dA=0$ are fulfilled and we will prove that $q_1=q_2$. We start, with the following.
\begin{lemma} \label{l4} Let $A=(a_1,\ldots,a_3)\in  L^\infty(\R^3)^3$. Assume that  $dA=0$,  and  fix
\bel{l4aa}\phi(x):=\int_0^1A(sx)\cdot xds,\quad x\in\R^3.\ee
Then, we have $\phi\in W^{1,\infty}_{loc}(\R^3)$ and $\nabla \phi=A$.
\end{lemma}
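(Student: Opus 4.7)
The strategy is the classical Poincar\'e lemma argument adapted to the low-regularity setting: approximate $A$ by smooth magnetic potentials via mollification, apply the elementary homotopy formula to the smooth approximations, and pass to the limit. The closedness condition $dA=0$, read as the distributional identities $\partial_{x_j}a_k=\partial_{x_k}a_j$, survives convolution, so the smooth approximants remain closed.

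\textbf{Step 1 (the defining integral makes sense).} First I would check that \eqref{l4aa} is well defined almost everywhere. The map $F\colon(0,1]\times\mathbb{R}^3\to\mathbb{R}^3$, $F(s,x)=sx$, is a $C^\infty$ diffeomorphism onto $\mathbb{R}^3\setminus\{0\}$ with Jacobian $s^3$, so the preimage of any Lebesgue-null set is Lebesgue-null in $(0,1]\times\mathbb{R}^3$. By Fubini, for a.e. $x\in\mathbb{R}^3$ the function $s\mapsto A(sx)$ is well defined for a.e. $s\in[0,1]$, independently of the choice of representative of $A$. The pointwise bound $|A(sx)\cdot x|\leq\|A\|_{L^\infty(\mathbb R^3)^3}|x|$ yields $|\phi(x)|\leq\|A\|_{L^\infty(\mathbb R^3)^3}|x|$, hence $\phi\in L^\infty_{loc}(\mathbb{R}^3)$.

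\textbf{Step 2 (mollify).} Let $\chi_\varepsilon$ be a standard mollifier on $\mathbb{R}^3$ and set $A_\varepsilon:=\chi_\varepsilon*A\in\mathcal C^\infty(\mathbb{R}^3)^3$. Then $\|A_\varepsilon\|_{L^\infty}\leq\|A\|_{L^\infty}$ and $A_\varepsilon\to A$ in $L^1_{loc}(\mathbb{R}^3)$ as $\varepsilon\to 0^+$. Since $dA=0$ in $\mathcal D'(\mathbb{R}^3)$, convolution with $\chi_\varepsilon$ preserves this, so $\partial_{x_j}a_{k,\varepsilon}=\partial_{x_k}a_{j,\varepsilon}$ classically on $\mathbb{R}^3$ for all $1\leq j,k\leq 3$.

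\textbf{Step 3 (smooth Poincar\'e lemma).} Define $\phi_\varepsilon(x):=\int_0^1 A_\varepsilon(sx)\cdot x\,ds$. Differentiating under the integral and using $\partial_{x_j}a_{k,\varepsilon}=\partial_{x_k}a_{j,\varepsilon}$,
\[
\partial_{x_j}\phi_\varepsilon(x)=\int_0^1\Bigl(a_{j,\varepsilon}(sx)+s\sum_{k}x_k\partial_{x_k}a_{j,\varepsilon}(sx)\Bigr)ds=\int_0^1\frac{d}{ds}\bigl(s\,a_{j,\varepsilon}(sx)\bigr)ds=a_{j,\varepsilon}(x),
\]
so $\phi_\varepsilon\in\mathcal C^\infty(\mathbb{R}^3)$ and $\nabla\phi_\varepsilon=A_\varepsilon$ on $\mathbb{R}^3$.

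\textbf{Step 4 (pass to the limit).} The key estimate I would establish is $\phi_\varepsilon\to\phi$ in $L^1_{loc}(\mathbb{R}^3)$. Fix $R>0$. By Fubini and the change of variables $y=sx$ (for $s\in(0,1]$, with Jacobian $s^{-3}$ on the $x\to y$ side),
\[
\int_{B_R}|\phi_\varepsilon(x)-\phi(x)|\,dx\leq\int_0^1\int_{B_R}|A_\varepsilon(sx)-A(sx)|\,|x|\,dx\,ds\leq R\int_0^1 s^{-3}\|A_\varepsilon-A\|_{L^1(B_{sR})^3}\,ds.
\]
The integrand is dominated by $2R\|A\|_{L^\infty}|B_1|R^3$ (uniformly in $\varepsilon$ and $s$) because $\|A_\varepsilon-A\|_{L^1(B_{sR})^3}\leq 2\|A\|_{L^\infty}|B_{sR}|=2\|A\|_{L^\infty}|B_1|(sR)^3$; and for each fixed $s\in(0,1]$ the integrand tends to $0$ as $\varepsilon\to 0$. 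Dominated convergence gives the claim. This is the only step requiring any care, and it is the main technical obstacle in the proof.

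\textbf{Step 5 (conclude).} From $\phi_\varepsilon\to\phi$ in $L^1_{loc}$ and $\nabla\phi_\varepsilon=A_\varepsilon\to A$ in $L^1_{loc}$, passing to the limit in the distributional identity $\int\phi_\varepsilon\partial_{x_j}\psi\,dx=-\int a_{j,\varepsilon}\psi\,dx$ for $\psi\in\mathcal C^\infty_0(\mathbb{R}^3)$ yields $\nabla\phi=A$ in $\mathcal D'(\mathbb{R}^3)$. Combined with $\phi\in L^\infty_{loc}$ (Step 1) and $A\in L^\infty(\mathbb{R}^3)^3$, this gives $\phi\in W^{1,\infty}_{loc}(\mathbb{R}^3)$ with $\nabla\phi=A$, completing the proof.
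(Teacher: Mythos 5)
Your proof is correct, but it takes a genuinely different route from the paper's. The paper proves $\nabla\phi=A$ by a direct distributional computation: it pairs $\partial_{x_j}\phi$ with a test function, performs the change of variables $y=sx$ followed by $t=s^{-1}$ inside the resulting iterated integral, uses $dA=0$ to replace $\sum_k x_k\partial_{x_j}a_k$ by $\sum_k x_k\partial_{x_k}a_j$, and finishes by recognizing the $t$-integrand as $-\partial_t\bigl(t^2\langle a_j,\psi(t\cdot)\rangle\bigr)$ and evaluating the boundary terms. You instead mollify $A$ to $A_\varepsilon$, invoke the classical smooth Poincar\'e lemma for $A_\varepsilon$, prove $\phi_\varepsilon\to\phi$ in $L^1_{loc}$ by a change of variables plus dominated convergence, and pass to the limit in the distributional identity. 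Both arguments are valid and of comparable length; the paper's is self-contained at the distributional level and requires no approximation machinery, while yours is arguably more transparent because the only nontrivial input (the smooth Poincar\'e lemma) is standard and the analytic work is concentrated in a single dominated-convergence estimate.

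One small slip in your Step 1: the map $F(s,x)=sx$ from $(0,1]\times\R^3$ to $\R^3$ goes from a four-dimensional set to a three-dimensional one, so it cannot be a diffeomorphism onto $\R^3\setminus\{0\}$ (it is surjective but very far from injective). The conclusion you draw from it, namely that the preimage of a Lebesgue-null set $N\subset\R^3$ is null in $(0,1]\times\R^3$, is nonetheless correct: for each fixed $s\in(0,1]$ the slice $\{x: sx\in N\}=s^{-1}N$ has measure $s^{-3}|N|=0$, and Tonelli's theorem then gives that $\{(s,x):sx\in N\}$ is null. Replacing the false ``diffeomorphism'' justification by this Fubini argument repairs the step without changing anything downstream.
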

\begin{proof} Note first that since $A\in L^\infty(\R^3)^3$, we have $\phi\in L^\infty_{loc}(\R^3)$. Let $\psi\in \mathcal C^\infty_0(\R^3)$ and consider $j\in\{1,2,3\}$. We have
$$\begin{aligned}\left\langle \partial_{x_j}\phi,\psi\right\rangle_{D'(\R^3),\mathcal C^\infty_0(\R^3)}&=-\left\langle \phi,\partial_{x_j}\psi\right\rangle_{L^2(\R^3)}\\
\ &=-\sum_{k=1}^{3}\int_{\R^3}\int_0^1 x_ka_k(sx)\partial_{x_j}\psi(x)dsdx\\
\ &=-\sum_{k=1}^{3}\int_0^1 \int_{\R^3}x_ka_k(sx)\partial_{x_j}\psi(x)dxds.\end{aligned}$$
Applying the change of variable $y=sx$ and then $t=s^{-1}$, we obtain
$$\begin{aligned}\left\langle \partial_{x_j}\phi,\psi\right\rangle_{D'(\R^3),\mathcal C^\infty_0(\R^3)}&=-\sum_{j=1}^{3}\int_0^1 s^{-4}\left(\int_{\R^3}y_ja_j(y)\partial_{x_j}\psi(s^{-1}y)dy\right)ds\\
\ &=-\sum_{k=1}^{3}\int_1^{+\infty} t^{2}\int_{\R^3}y_ka_k(y)\partial_{x_j}\psi(ty)dydt\\
\ &=\int_1^{+\infty} t\left\langle \partial_{x_j} \left(\sum_{k=1}^{3}x_ka_k\right),\psi(t\cdot)\right\rangle_{D'(\R^3),\mathcal C^\infty_0(\R^3)}dt,\end{aligned}$$
with, for $\tau\in\R$, $\psi(\tau\cdot):=x\mapsto\psi(\tau x)$.
On the other hand, we have
$$\begin{aligned}&\left\langle \partial_{x_j} \left(\sum_{k=1}^{3}x_ka_k\right),\psi(t\cdot)\right\rangle_{D'(\R^3),\mathcal C^\infty_0(\R^3)}\\
&=\left\langle a_j,\psi(t\cdot)\right\rangle_{D'(\R^3),\mathcal C^\infty_0(\R^3)}+\left\langle  \left(\sum_{k=1}^{3}x_k\partial_{x_j}a_k\right),\psi(t\cdot)\right\rangle_{D'(\R^3),\mathcal C^\infty_0(\R^3)}\end{aligned}$$
and using the fact that $dA=0$, we get
$$\begin{aligned}&\left\langle \partial_{x_j} \left(\sum_{k=1}^{3}x_ka_k\right),\psi(t\cdot)\right\rangle_{D'(\R^3),\mathcal C^\infty_0(\R^3)}\\
&=\left\langle a_j,\psi(t\cdot)\right\rangle_{D'(\R^3),\mathcal C^\infty_0(\R^3)}+\left\langle  \left(\sum_{k=1}^{3}x_k\partial_{x_k}a_j\right),\psi(t\cdot)\right\rangle_{D'(\R^3),\mathcal C^\infty_0(\R^3)}\\
&=-2\left\langle a_j,\psi(t\cdot)\right\rangle_{D'(\R^3),\mathcal C^\infty_0(\R^3)}-t\left\langle a_j, \left(\sum_{k=1}^{3}x_k\partial_{x_k}\psi(t\cdot)\right)\right\rangle_{D'(\R^3),\mathcal C^\infty_0(\R^3)}. \end{aligned}$$
It follows
$$\begin{aligned}&\left\langle \partial_{x_j}\phi,\psi\right\rangle_{D'(\R^3),\mathcal C^\infty_0(\R^3)}\\
&=-\int_1^{+\infty} 2t\left\langle a_j,\psi(t\cdot)\right\rangle_{D'(\R^3),\mathcal C^\infty_0(\R^3)}dt-\int_1^{+\infty} t^2\partial_t\left\langle a_j, \psi(t\cdot)\right\rangle_{D'(\R^3),\mathcal C^\infty_0(\R^3)}dt\\
&=-\int_1^{+\infty}\partial_t\left[t^2\left\langle a_j, \psi(t\cdot)\right\rangle_{D'(\R^3),\mathcal C^\infty_0(\R^3)}\right]dt\\
&=\left\langle a_j, \psi\right\rangle_{D'(\R^3),\mathcal C^\infty_0(\R^3)}-\lim_{t\to+\infty} t^2\left\langle a_j, \psi(t\cdot)\right\rangle_{D'(\R^3),\mathcal C^\infty_0(\R^3)}=\left\langle a_j, \psi\right\rangle_{D'(\R^3),\mathcal C^\infty_0(\R^3)}.\end{aligned}$$
This proves that $\nabla_x\phi=A$ and it completes the proof of the lemma.\end{proof}
According to Lemma \ref{l4}, the function $\phi\in W^{1,\infty}_{loc}(\R^3)$ given by \eqref{l4aa} satisfies $\nabla\phi=A$. Since $\omega$ is simply connected $\Omega_1=\omega\times\R$ is also simply connected and $\R^3\setminus\Omega_1$ is connected. Therefore,  according to the fact that $A=0$ in $\R^3\setminus\Omega_1$, by extracting a constant to $\phi$ we may assume that $\phi=0$ on $\R^3\setminus\Omega_1$. Thus, we have $\phi_{|\partial\Omega_1}=0$. Note also that by eventually extending $\omega$, we may assume that $\Omega_1$ contains a neighborhood of $\overline{\Omega}$.  Now, for $A\in L^\infty(\Omega_1)^3$ and $q\in L^\infty(\Omega_1)$ let us consider the set of data
$$\mathcal D_{1,A,q}:=\{(\tau_1 u, N_{1,A,q}u):\ u\in H^1(\Omega_1),\  \Delta_Au+qu=0\},$$
where $\tau_1$ is the extension of the map $u\mapsto u_{|\pd\Omega_1}$ and, for any solution $u\in H^1(\Omega_1)$ of $\Delta_Au+qu=0$ on $\Omega_1$, $N_{1,A,q}u$ denotes the unique elements of $H^{-\frac{1}{2}}(\pd\Omega_1)$ satisfying
$$\left\langle N_{1,A,q}u,\tau_1 g\right\rangle_{H^{-\frac{1}{2}}(\pd\Omega_1),H^{\frac{1}{2}}(\pd\Omega_1), }=-\int_{\Omega_1} (\nabla+iA)u\cdot \overline{(\nabla+iA)g}dx+\int_{\Omega_1} qu\overline{g}dx,\ g\in H^1(\Omega_1).$$
Repeating some arguments of \cite[Proposition 3.4]{KU} (see also \cite[Lemma 4.2]{Sa1}), one can easily check the following.
\begin{proposition}\label{p4} For $j=1,2$, let $A_j\in L^\infty(\Omega_1)^3$, $q_j\in L^\infty(\Omega_1)$ and assume that
$$A_1(x)=A_2(x),\quad q_1(x)=q_2(x),\quad x\in\Omega_1\setminus\Omega.$$
Then the condition \eqref{t1a} implies that $\mathcal D_{1,A_1,q_1}=\mathcal D_{1,A_2,q_2}$.
\end{proposition}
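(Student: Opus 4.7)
The plan is to establish $\mathcal D_{1,A_1,q_1}\subset\mathcal D_{1,A_2,q_2}$; the reverse inclusion is obtained by exchanging the roles of $(A_1,q_1)$ and $(A_2,q_2)$, since both the hypothesis \eqref{t1a} and the coefficient matching on $\Omega_1\setminus\Omega$ are symmetric. Given $u_1\in H^1(\Omega_1)$ solving $\Delta_{A_1}u_1+q_1u_1=0$ in $\Omega_1$, the goal is to produce $v_2\in H^1(\Omega_1)$ with $\Delta_{A_2}v_2+q_2v_2=0$ in $\Omega_1$ and $(\tau_1v_2,N_{1,A_2,q_2}v_2)=(\tau_1u_1,N_{1,A_1,q_1}u_1)$. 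The strategy is a gluing argument: use hypothesis \eqref{t1a} to replace $u_1$ on $\Omega$ by a matching $(A_2,q_2)$-solution while keeping $u_1$ itself on $\Omega_1\setminus\Omega$, and exploit $A_1=A_2$, $q_1=q_2$ there so the two pieces fit consistently.

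Since the magnetic Schr\"odinger equation is local, $u_1|_\Omega\in H^1(\Omega)$ still solves $\Delta_{A_1}(u_1|_\Omega)+q_1(u_1|_\Omega)=0$ in $\Omega$, hence $(\tau(u_1|_\Omega),N_{A_1,q_1}(u_1|_\Omega))\in\mathcal D_{A_1,q_1}=\mathcal D_{A_2,q_2}$. Choose $v\in H^1(\Omega)$ realizing this Cauchy pair for $(A_2,q_2)$: $\Delta_{A_2}v+q_2v=0$ in $\Omega$, $\tau v=\tau(u_1|_\Omega)$, and $N_{A_2,q_2}v=N_{A_1,q_1}(u_1|_\Omega)$. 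The first identity means $w:=v-u_1|_\Omega\in H^1_0(\Omega)$, so its extension-by-zero $\tilde w$ belongs to $H^1(\Omega_1)$. Set $v_2:=u_1+\tilde w\in H^1(\Omega_1)$; then $v_2=v$ a.e.\ on $\Omega$ and $v_2=u_1$ a.e.\ on $\Omega_1\setminus\overline\Omega$.

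The equation $\Delta_{A_2}v_2+q_2v_2=0$ in $\Omega_1$ is checked by testing against an arbitrary $\phi\in\mathcal C^\infty_0(\Omega_1)$ and splitting the weak formulation over $\Omega$ and $\Omega_1\setminus\Omega$. The $\Omega$-piece equals $\langle N_{A_2,q_2}v,\tau(\phi|_\Omega)\rangle$ by definition of $N_{A_2,q_2}$, which by assumption coincides with $\langle N_{A_1,q_1}(u_1|_\Omega),\tau(\phi|_\Omega)\rangle$; the $\Omega_1\setminus\Omega$-piece is unchanged when $(A_2,q_2,v_2)$ is replaced by $(A_1,q_1,u_1)$ because all three quantities coincide there; hence the sum recombines into the full weak formulation for $u_1$ on $\Omega_1$, which vanishes. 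The identification $\tau_1v_2=\tau_1u_1$ is immediate: by the extension of $\omega$ already invoked in the excerpt, $\Omega_1$ contains a neighborhood of $\overline\Omega$, so $\tilde w$ vanishes near $\partial\Omega_1$. The Neumann-trace identity on $\partial\Omega_1$ follows from exactly the same splitting, performed with a general $g\in H^1(\Omega_1)$ instead of a compactly supported $\phi$, combined again with $N_{A_2,q_2}v=N_{A_1,q_1}(u_1|_\Omega)$.

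The main obstacle is purely technical: one must interpret the $\Omega$-piece of the split integral correctly as a duality pairing with the equivalence class $\tau(\phi|_\Omega)\in H^1(\Omega)/H^1_0(\Omega)$ (and similarly for $\tau(g|_\Omega)$), since $N_{A_j,q_j}$ is intrinsically defined on that quotient, and to verify that the extension by zero $\tilde w$ really produces an element of $H^1(\Omega_1)$ whose gradient splits compatibly with the $\Omega\,/\,\Omega_1\setminus\Omega$ decomposition. The coefficient-matching hypothesis $A_1=A_2$, $q_1=q_2$ on $\Omega_1\setminus\Omega$ is exactly what makes the two bulk contributions over $\Omega_1\setminus\Omega$ cancel term by term, so the entire verification reduces to the single full-data identity $N_{A_2,q_2}v=N_{A_1,q_1}(u_1|_\Omega)$ supplied by \eqref{t1a}.
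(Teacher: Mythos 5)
Your gluing construction is correct, and it is essentially the argument behind \cite[Proposition 3.4]{KU} (and \cite[Lemma 4.2]{Sa1}), which is exactly what the paper invokes here rather than spelling out a proof. The only cosmetic remark is that $\tilde w\in H^1_0(\Omega_1)$ already follows from $\mathcal C^\infty_0(\Omega)\subset\mathcal C^\infty_0(\Omega_1)$, so one does not actually need the ``$\Omega_1$ contains a neighborhood of $\overline\Omega$'' step to see that $\tau_1\tilde w=0$.
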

In view of this result and the fact that $A_1=A_2=0$ and $q_1=q_2=0$ on $\Omega_1\setminus \Omega$, we deduce that $\mathcal D_{1,A_1,q_1}=\mathcal D_{1,A_2,q_2}$. Moreover, using the fact that $A_1-A_2=\nabla\phi$ with $\phi\in W^{1,\infty}_{loc}(\Omega_1)$ satisfying $\phi_{|\R^3\setminus\Omega_1}=0$, we obtain
$$\mathcal D_{1,A_1,q_2}=\mathcal D_{1,A_2+\nabla\phi,q_2}=\mathcal D_{1,A_2,q_2}=\mathcal D_{1,A_1,q_1}.$$
Therefore, repeating the argumentation of Section 4.1, with $A_1=A_2$, we find
\bel{t1g}\lim_{\rho\to+\infty}\int_{\R^3} q(x)\psi^2(\rho^{-\frac{1}{4}} x_3)e^{-ix\cdot \xi}dx=0,\ee
for all $\xi=(\xi',\xi_3)\in\R^2\times\R$ with $\xi'\neq0$ and $\xi_3\neq0$. Here we have used the fact that, following our definition, $A_{1,\rho}=A_{2,\rho}$, $\overline{\Phi_{2,\rho}}=-\Phi_{1,\rho}$ and $b_{1,\rho}\overline{b_{2,\rho}}=1$. In \eqref{t1g}, we can assume for instance that $\psi=1$ on $[-1,1]$. We fix $q_\rho(x',x_3)=q(x',x_3)\psi^2(\rho^{-\frac{1}{4}} x_3)$, $(x',x_3)\in\R^2\times\R$ and we remark that
$$\begin{aligned}\norm{\mathcal F(q_\rho)-\mathcal F(q)}_{L^2(\R^3)}^2=\norm{q_\rho-q}_{L^2(\R^3)}^2&\leq \int_{\R^3}(1-\psi^2(\rho^{-\frac{1}{4}} x_3))|q(x)|^2dx\\
\ &\leq \int_{|x_3|\geq \rho^{\frac{1}{4}}}\left(\int_{\R^2}|q(x',x_3)|^2dx'\right)dx_3.\end{aligned}$$
Combining this with the fact that, according to Fubini's theorem,
$$x_3\mapsto \left(\int_{\R^2}|q(x',x_3)|^2dx'\right)\in L^1(\R),$$
we deduce that
$$\lim_{\rho\to+\infty}\norm{\mathcal F(q_\rho)-\mathcal F(q)}_{L^2(\R^3)}=0.$$
Thus, there exists a sequence $(\rho_k)_{k\in\mathbb N}$ such that $\rho_k\to+\infty$ and for a.e. $\xi\in\R^3$ we have
$$\lim_{k\to+\infty}\mathcal F(q_{\rho_k})(\xi)=\mathcal F(q)(\xi).$$
Combining this with \eqref{t1g}, we obtain that $\mathcal F(q)=0$ which implies that $q=0$ and $q_1=q_2$. This completes the proof of Theorem \ref{t1}.

\section{Recovery from measurements on a bounded portion of $\pd\Omega$}

In this section we will prove  Theorem \ref{c1} and we assume  that  the conditions of this theorem  are fulfilled. Recall that $\tau_0$ denotes the extension of the map $u\mapsto u_{|\pd\Omega}$ to $u\in H^1(\Omega)$ which takes values in $H^{\frac{1}{2}}_{loc}(\pd\Omega)$. Consider the sets of functions
$$Q_{A,q}:=\{u\in H^1(\Omega):\ \Delta_{A} u+qu=0\},$$
$$ Q_{A,q,r}:=\{ u\in Q_{A,q}: \ \textrm{supp}(\tau_0 u)\subset S_r\},\quad j=1,2.$$
Here we recall that $S_r=\pd\Omega\cap(\overline{\omega}\times[-r,r])$. We have the following density result.

\begin{proposition}\label{l5}  The space $Q_{A_1,q_1,r}$ $($resp. $Q_{A_2,\overline{q_2},r}$$)$  is dense in $Q_{A_1,q_1}$ $($resp. $Q_{A_2,\overline{q_2}}$$)$ for the topology induced by $L^2(\Omega\setminus(\Omega_{-}\cup\Omega_{+}))$.\end{proposition}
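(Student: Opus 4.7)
The plan is to argue by Hahn--Banach duality in $L^2(\Omega\setminus(\Omega_-\cup\Omega_+))$: it suffices to show that if $f\in L^2(\Omega\setminus(\Omega_-\cup\Omega_+))$ satisfies $\int_\Omega \tilde f\,\bar u\,dx=0$ for every $u\in Q_{A_1,q_1,r}$, where $\tilde f$ denotes the extension of $f$ by $0$ to $\Omega$, then the same identity persists for every $u\in Q_{A_1,q_1}$. The case of $Q_{A_2,\overline{q_2},r}$ in $Q_{A_2,\overline{q_2}}$ will be treated by the same argument with $q_1$ replaced by $\overline{q_2}$ and $A_1$ by $A_2$.

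To transfer the test from the volume to the boundary, I would invoke Assumption 1 to produce $v\in H^1_0(\Omega)$ solving $\Delta_{A_1}v+\overline{q_1}v=\tilde f$ in $\Omega$, and introduce the auxiliary map $\mathcal N v\in \bigl(H^1(\Omega)/H^1_0(\Omega)\bigr)'$ defined by
\begin{equation*}
\langle \mathcal N v,\tau g\rangle := -\int_\Omega (\nabla+iA_1)v\cdot\overline{(\nabla+iA_1)g}\,dx+\int_\Omega \overline{q_1}v\bar g\,dx-\int_\Omega \tilde f\,\bar g\,dx,\quad g\in H^1(\Omega),
\end{equation*}
which is well-defined on the quotient thanks to the weak formulation satisfied by $v$. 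For $u\in Q_{A_1,q_1}$, taking the complex conjugate of the identity $\langle N_{A_1,q_1}u,\tau v\rangle=0$ (which holds since $\tau v=0$) yields $\int_\Omega (\nabla+iA_1)v\cdot\overline{(\nabla+iA_1)u}\,dx = \int_\Omega \overline{q_1}v\,\bar u\,dx$, and substituting back into the definition of $\mathcal N v$ produces the key identity
\begin{equation*}
\int_\Omega \tilde f\,\bar u\,dx = -\langle \mathcal N v,\tau u\rangle,\quad u\in Q_{A_1,q_1}.
\end{equation*}
Applied to $u\in Q_{A_1,q_1,r}$, the hypothesis forces $\mathcal N v$ to annihilate every trace supported in $S_r$; heuristically, this amounts to the vanishing of the conormal derivative of $v$ on $S_r$.

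The decisive step, which I expect to be the main obstacle, is a boundary unique continuation argument on $\Omega_-$ and $\Omega_+$. Since $\tilde f=0$ there, $v$ solves $\Delta_{A_1}v+\overline{q_1}v=0$ on each $\Omega_\pm$, an elliptic equation whose first-order and zeroth-order coefficients lie in $L^\infty$ thanks to $\mathrm{div}(A_1)\in L^\infty(\Omega)$. Moreover $v=0$ on $\partial\Omega_-\cap\partial\Omega$ by $v\in H^1_0(\Omega)$, and the vanishing of $\mathcal N v$ on $S_r$ yields the vanishing of the normal derivative of $v$ on $S_r\cap\partial\Omega_-\cap\partial\Omega$, which is a relatively open non-empty subset of $\partial\Omega_-$ by the condition $\delta\in(0,r/2)$ combined with \eqref{c1a}. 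Extending $v$ by zero across this Lipschitz portion of $\partial\Omega_-$ and applying Aronszajn's unique continuation theorem then forces $v\equiv 0$ on $\Omega_-$; the symmetric argument gives $v\equiv 0$ on $\Omega_+$.

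Once $v$ vanishes identically on $\Omega_-\cup\Omega_+$, its conormal trace vanishes on $\partial\Omega\cap(\partial\Omega_-\cup\partial\Omega_+)$ as well, so $\mathcal N v=0$ on that part of the boundary. Combined with its vanishing on $S_r$, and the geometric observation that \eqref{c1a} implies $\partial\Omega\setminus S_r\subset\partial\Omega_-\cup\partial\Omega_+$, I obtain $\mathcal N v\equiv 0$ on all of $\partial\Omega$. Feeding this back into the key identity yields $\int_\Omega \tilde f\,\bar u\,dx=0$ for every $u\in Q_{A_1,q_1}$, which is exactly the density statement.
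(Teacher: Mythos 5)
Your proposal follows essentially the same strategy as the paper: dualize in $L^2(\Omega\setminus(\Omega_-\cup\Omega_+))$, use Assumption 1 to solve the adjoint source problem, trade the volume test for a boundary condition, run unique continuation to show the adjoint solution vanishes on $\Omega_\pm$, and close the loop. However, there are two technical gaps, both of which are precisely what the paper's proof is organized around.

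The first and more serious gap is the unique continuation step. You write that the weak vanishing of $\mathcal N v$ on $S_r$ ``yields the vanishing of the normal derivative of $v$'' on $S_r\cap\partial\Omega_-\cap\partial\Omega$, and that you can then extend $v$ by zero across this Lipschitz portion and quote Aronszajn. Both sub-claims need justification that is not available at this regularity: $\partial\Omega$ is only Lipschitz, so $\partial_\nu v$ is not a pointwise function; the weak annihilation of traces supported in $S_r$ does not directly give a.e. vanishing of a conormal derivative; and even granting both $v=0$ and $\partial_\nu v=0$ on a boundary portion, one must still check that the zero extension solves the PDE distributionally across a Lipschitz interface. The paper sidesteps all three issues: it tests against smooth $F\in\mathcal C^\infty_0(\R^3)$ with $\mathrm{supp}(F_{|\partial\Omega})\subset S_r$, obtains \eqref{ll5c} without ever invoking a pointwise conormal trace, and then appends an \emph{exterior} open set $\Omega_*\subset\R^3\setminus\Omega$ with $\Omega_*\cap\partial\Omega$ inside $S_r\cap\partial\Omega_-$ so that the zero extension of $u$ to $\Omega_{-,*}=\Omega_-\cup\Omega_*$ is shown to be a distributional solution there; unique continuation is then run from the \emph{interior} open set $\Omega_*$ (via \cite{GL,SS}), which is robust, rather than from Cauchy data on a Lipschitz boundary.

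The second gap is in the final reconstruction of the pairing. Concluding that ``$\mathcal N v\equiv 0$ on all of $\partial\Omega$'' from its vanishing on $S_r$ and on $\partial\Omega\cap(\partial\Omega_-\cup\partial\Omega_+)$ requires a partition-of-unity decomposition $\tau g=\tau g_1+\tau g_2$ with $\mathrm{supp}(\tau_0 g_1)\subset S_r$ and $\mathrm{supp}(\tau_0 g_2)\subset\partial\Omega_-\cup\partial\Omega_+$, exploiting the $\delta$-overlap guaranteed by \eqref{c1a}; this has to be stated. The paper instead proves the single identity \eqref{ll5d} directly by a two-cut-off argument ($\phi_1,\phi_2$), which again avoids any appeal to boundary traces: the term $\phi_2 v_0$ has trace supported in $S_r$ (handled by Assumption 1 and the hypothesis), while $(1-\phi_1)u$ vanishes in a neighborhood of $\partial\Omega$ inside $\Omega$ (here is where $u_{|\Omega_\pm}=0$ is used), so that integration by parts against $(1-\phi_2)v_0$ can be justified by approximating $\nabla[(1-\phi_1)u]$ by $\mathcal C^\infty_0(\Omega)^3$ fields. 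In short: your plan and the paper's are the same in spirit, but the places you flag as heuristic are exactly where the paper invests its technical effort, and the specific mechanism (the auxiliary exterior set $\Omega_*$ plus interior unique continuation; the $\phi_1,\phi_2$ cut-offs plus distributional integration by parts) is what you would need to supply to make the argument rigorous in a general Lipschitz, unbounded $\Omega$.
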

\begin{proof} The proof of these two results being similar, we will only show the density of  $Q_{A_1,q_1,r}$ in $Q_{A_1,q_1}$. We will prove the proposition by contradiction. Assume that   $Q_{A_1,q_1,r}$  is not dense in $Q_{A_1,q_1}$.
Then, there exist $h\in L^2(\Omega\setminus(\Omega_{-}\cup\Omega_{+}))$ and $v_0\in Q_{A_1,q_1}$ such that
\bel{l5a}\int_{\Omega\setminus(\Omega_{-}\cup\Omega_{+})} h\overline{v}dx=0,\quad v\in Q_{A_1,q_1,r},\ee
\bel{l5b}\int_{\Omega\setminus(\Omega_{-}\cup\Omega_{+})} h\overline{v_0}dx\neq0.\ee
Let us  mention that in contrast to several other  related density result (e.g. \cite[Proposition 3.1]{KLU} and \cite[Lemma 6.1]{Ki4}) we consider a general unbounded Lipschitz domain and we can not apply the Green formula in the usual sense. To avoid such difficulties, here we  proceed differently than other related results.

From now on, we extend $h$ by $0$ to $\Omega$. In view of Assumption 1, there exists $u\in H^1_0(\Omega)$ such that $\Delta_{A_1}u + \overline{q_1}u=h$. 
  Then, condition \eqref{l5a} implies
\bel{l5c}\int_{\Omega} (\Delta_{A_1} + \overline{q_1} ) u\overline{v}dx=0,\quad v\in Q_{A_1,q_1,r}.\ee
Moreover, for any $\phi\in\mathcal C^\infty_0(\Omega)$ and any $w\in H^1(\Omega)$,  we have
\bel{ll5aa}\begin{aligned} 2i\int_\Omega (A_1\cdot\nabla\phi)\overline{w}dx &=2i\overline{\left\langle wA_1, \nabla\phi \right\rangle_{\left(\mathcal C^\infty_0(\Omega)^3\right)',\mathcal C^\infty_0(\Omega)^3}}\\
\ &=-2i\overline{\left\langle \textrm{div}(wA_1), \phi \right\rangle_{D'(\Omega),\mathcal C^\infty_0(\Omega)}}\\
\ &=-2i\int_\Omega \textrm{div}(A_1)\phi\overline{w} dx+\int_\Omega \phi (\overline{2iA_1\cdot\nabla w}) dx.\end{aligned}\ee
By density we can extend this identity to $\phi\in H^1_0(\Omega)$. Combining this with the fact that $u\in H^1_0(\Omega)$,  for any $v\in Q_{A_1,q_1,r}$, we obtain
\bel{ll5a}\begin{aligned}\int_\Omega \Delta u\overline{v}dx-\int_\Omega  u\overline{\Delta v}dx&=\int_{\Omega} (\Delta_{A_1} + \overline{q_1}) u\overline{v}dx-\int_{\Omega} u \overline{(\Delta_{A_1} + q_1 )v}dx\\
\ &=\int_{\Omega\setminus(\Omega_{-}\cup\Omega_{+})} h\overline{v}dx=0.\end{aligned}\ee
On the other hand, in view of Assumption 1, for any $F\in\mathcal C^\infty_0(\R^3)$, satisfying supp$(F_{|\pd\Omega})\subset S_r$, we can define $w_{F}\in H^1_0(\Omega)$ solving $\Delta_{A_1}w_F+q_1w_F=-\Delta_{A_1}F+q_1F$ and $v=w_F+F\in Q_{A_1,q_1,r}$. Using this choice for the element $v\in Q_{A_1,q_1,r}$ in \eqref{ll5a}, we deduce that
\bel{ll5b}\int_\Omega \Delta u(\overline{w_F+F})dx-\int_\Omega  u(\overline{\Delta w_F+\Delta F})dx=0.\ee
In addition, since $u\in H^1_0(\Omega)$ and $w_F\in H^1_0(\Omega)$, one can check by density that
$$\int_\Omega \Delta u\overline{w_F}dx-\int_\Omega  u\overline{\Delta w_F}dx=-\int_\Omega \nabla u\cdot\overline{\nabla w_F}dx+\int_\Omega  \nabla u\cdot\overline{\nabla w_F}dx=0.$$
Combining this with \eqref{ll5b}, we get
\bel{ll5c}\int_\Omega \Delta u\overline{F}dx-\int_\Omega  u\overline{\Delta F}dx=0,\quad F\in\{G\in\mathcal C^\infty_0(\R^3): \textrm{supp}(G_{|\pd\Omega})\subset S_r\}.\ee
We fix $\gamma_1$ an open set of $\pd\Omega$ such that $\gamma_1\subset (S_r\setminus[\pd\Omega\cap (\overline{\omega}\times[\delta-r,r-\delta])])$. Then, we consider $\Omega_*$ a bounded subset of $\R^3\setminus\Omega$ with no empty interior such that $\Omega_*\cap\partial\Omega\subset\gamma_1$  and such that $\Omega_{-,*}:=\Omega_{-}\cup\Omega_*$ is an open connected set of $\R^3$. Applying \eqref{ll5aa} and \eqref{ll5c},  we deduce  that the extension of $u$ by zero to $\Omega_{-,*}$ satisfies
$$
\left\{
\begin{array}{l} 
(\Delta_{A_1} + \overline{q_1} )u  = 0\ \  \mbox{in}\ \Omega_{-,*},\\ 
u\in H^1(\Omega_{-,*})\\
u_{|\Omega_*}  = 0.
\end{array}
\right.
$$
Then, applying the unique continuation property for elliptic equations (e.g. \cite[Theorem 1.1]{GL} and \cite[Theorem 1]{SS}), we deduce that $u_{|\Omega_{-}}=0$. In the same way, we can prove that $u_{|\Omega_{+}}=0$. Using these properties, we would like to prove the following identity
\bel{ll5d}\int_{\Omega}\Delta_{A_1}u\overline{v_0}dx=\int_{\Omega}u\overline{\Delta_{A_1}v_0}dx,\ee
where we recall that $v_0$ satisfies \eqref{l5b}. For this purpose, we first recall that in a similar way to \eqref{ll5a}, we can show that
$$\int_\Omega \Delta u\overline{v_0}dx-\int_\Omega  u\overline{\Delta v_0}dx=\int_{\Omega} \Delta_{A_1}  u\overline{v_0}dx-\int_{\Omega} u \overline{\Delta_{A_1} v_0}dx.$$
Thus, we only need to prove that 
\bel{ll5e}\int_{\Omega}\Delta u\overline{v_0}dx=\int_{\Omega}u\overline{\Delta v_0}dx,\ee
for showing \eqref{ll5d}. Let $\phi_1,\phi_2\in\mathcal C^\infty_0(\R^3)$ be such that $\phi_1=1$ on  $\overline{\omega}\times\left[\frac{\delta}{2}-r,r-\frac{\delta}{2}\right]$, $\phi_2=1$ on a neighborhood of  supp$(\phi_1)$ and supp$(\phi_2)\cap\pd\Omega\subset (\overline{\omega}\times\left[\frac{\delta}{3}-r,r-\frac{\delta}{3}\right])$. Since supp$(\phi_2 v_0)\cap\pd\Omega\subset S_r$ and  $$\Delta_{A_1}(\phi_2 v_0)=-q_1\phi_2v_0 + 2\nabla\phi_2\cdot\nabla v_0+(\Delta_{A_1}\phi_2)v_0\in L^2(\Omega),$$ in a similar way to \eqref{ll5c}, we can apply Assumption 1 and \eqref{l5a} in order to get
\bel{ll5f}\int_\Omega \Delta u\overline{\phi_2 v_0}dx-\int_\Omega  u\overline{\Delta (\phi_2v_0)}dx=0.\ee
In addition, using the fact that $\phi_2=1$  on a neighborhood of supp$(\phi_1)$, we get
\bel{ll5g}\int_{\Omega}\Delta u(\overline{(1-\phi_2)v_0})dx=\int_{\Omega}\Delta [(1-\phi_1)u](\overline{(1-\phi_2)v_0})dx.\ee
On the other hand, using the fact that $$\Omega_-\cup \left(\omega\times\left[\frac{\delta}{2}-r,r-\frac{\delta}{2}\right]\cap\Omega\right)\cup\Omega_+$$
 corresponds to the intersection between a neighborhood of $\pd\Omega$ and  $\Omega$, with the fact that 
\bel{ll5gg}(1-\phi_1)u(x)=0,\quad x\in \Omega_-\cup \left(\omega\times\left[\frac{\delta}{2}-r,r-\frac{\delta}{2}\right]\cap\Omega\right)\cup\Omega_+,\ee
we deduce that the function $(1-\phi_1)u$ extended by zero to $\R^3$, satisfies $\nabla[(1-\phi_1)u]\in L^2(\R^3)$ and div$(\nabla[(1-\phi_1)u])=\Delta[(1-\phi_1)u]\in L^2(\R^3)$. Moreover, combining \eqref{ll5gg} with the arguments used in the proof of \cite[Theorem 3.4 page 223]{EE}, we can find a sequence of functions $(G_k)_{k\in\mathbb N}$ lying in $\mathcal C^\infty_0(\Omega)^3$ such that
$$\lim_{k\to+\infty}\norm{G_k-\nabla[(1-\phi_1)u]}_{L^2(\Omega)}=\lim_{k\to+\infty}\norm{\textrm{div}(G_k)-\Delta[(1-\phi_1)u]}_{L^2(\Omega)}=0.$$
Then, we have 
$$\begin{aligned}\int_{\Omega}\textrm{div}(G_k) (\overline{(1-\phi_2)v_0})dx&=\overline{\left\langle (1-\phi_2)v_0,\textrm{div}(G_k)\right\rangle_{D'(\Omega),\mathcal C^\infty_0(\Omega)}}\\
\ &=-\overline{\left\langle \nabla [(1-\phi_2)v_0],G_k\right\rangle_{\left(\mathcal C^\infty_0(\Omega)^3\right)',\mathcal C^\infty_0(\Omega)^3}}\\
\ &=-\int_{\Omega}G_k \cdot\overline{\nabla[(1-\phi_2)v_0]})dx\end{aligned}$$
and sending $k\to+\infty$, we obtain
$$\int_{\Omega}\Delta [(1-\phi_1)u](\overline{(1-\phi_2)v_0})dx=-\int_{\Omega}\nabla [(1-\phi_1)u]\cdot(\overline{\nabla[(1-\phi_2)v_0]})dx.$$
Then, using the fact that $(1-\phi_1)u\in H^1_0(\Omega)$, we find
$$\int_{\Omega}\Delta [(1-\phi_1)u](\overline{(1-\phi_2)v_0})dx=-\int_{\Omega}\nabla [(1-\phi_1)u]\cdot(\overline{\nabla[(1-\phi_2)v_0]})dx=\int_{\Omega}[(1-\phi_1)u](\overline{\Delta[(1-\phi_2)v_0]})dx.$$
Combining this with \eqref{ll5g} and applying again the fact that $\phi_2=1$  on a neighborhood of  supp$(\phi_1)$, we find
$$\int_{\Omega}\Delta u(\overline{(1-\phi_2)v_0})dx=\int_{\Omega}[(1-\phi_1)u](\overline{\Delta[(1-\phi_2)v_0]})dx=\int_{\Omega}u(\overline{\Delta[(1-\phi_2)v_0]})dx.$$
From this identity and \eqref{ll5f}, we deduce \eqref{ll5e} and by the same way \eqref{ll5d}. Applying \eqref{ll5d}, we find
$$\int_{\Omega} h\overline{v_0}dx=\int_{\Omega} (\Delta_{A_1} + \overline{q_1} ) u\overline{v_0}dx=\int_{\Omega}  u\overline{(\Delta_{A_1} + q_1 )v_0}dx=0.$$
This contradicts \eqref{l5b}. We have  completed the proof of the proposition.\end{proof}

Applying this proposition, we will complete the proof of Theorem \ref{c1}. \\
\ \\
\textbf{Proof of the Theorem \ref{c1}.} Let $u_1\in Q_{A_1,q_1,r}$ and $u_2\in Q_{A_2,\overline{q_2},r}$. In a similar way to Section 4, we can prove that \eqref{c1c} implies
\bel{c1e}i\int_{\Omega}(A\cdot\nabla u_1)\overline{u_2}dx -i\int_{\Omega}u_1(\overline{A\cdot\nabla u_2})dx+\int_{\Omega}\tilde{q}u_1\overline{u_2}dx =0,\ee
with $A=A_1-A_2$ and $\tilde{q}=|A_2|^2-|A_1|^2+q_1-q_2$. On the other hand, according to \eqref{c1d}, we have
$$\int_{\Omega} u_1(A\cdot\overline{\nabla u_2})dx=-\int_{\Omega}(A\cdot\nabla u_1)\overline{ u_2}dx-\int_{\Omega}\textrm{div}(A) u_1\overline{ u_2}dx.$$
Combining this with \eqref{c1e}, we obtain
$$2i\int_{\Omega}(A\cdot\nabla u_1)\overline{u_2}dx +\int_{\Omega}[\tilde{q}+i\textrm{div}(A)]u_1\overline{u_2}dx =0.$$
Then, \eqref{c1b} implies
$$2i\int_{\Omega\setminus(\Omega_-\cup\Omega_+)}(A\cdot\nabla u_1)\overline{u_2}dx +\int_{\Omega\setminus(\Omega_-\cup\Omega_+)}[\tilde{q}+i\textrm{div}(A)]u_1\overline{u_2}dx =0.$$
Applying Lemma \ref{l5}, we deduce by density that this last identity holds true for any $u_1\in Q_{A_1,q_1,r}$ and any $u_2\in Q_{A_2,\overline{q_2}}$. Then applying again \eqref{c1d} and \eqref{c1b},  we deduce that \eqref{c1e} holds true for any $u_1\in Q_{A_1,q_1,r}$ and any $u_2\in Q_{A_2,\overline{q_2}}$. In the same way, applying \eqref{c1d} and \eqref{c1b}, we can prove that  \eqref{c1e} holds true for any $u_1\in Q_{A_1,q_1}$ and any $u_2\in Q_{A_2,\overline{q_2}}$. Finally, choosing $u_1,u_2$ in a similar way to Section 4, we can deduce that $dA_1=dA_2$. Then by repeating the arguments  at the end of Section 4, we deduce that, for $q_1-q_2\in L^2(\Omega)$, we have $q_1=q_2$.\qed

\section{The partial data result}

This section is devoted to the proof of Theorem \ref{t6}. For all $y\in\mathbb S^{1}$,  $r>0$, we set
\[\partial\omega_{+,r,y}=\{x\in\pd\omega:\ \nu(x)\cdot y>r\},\quad\partial\omega_{-,r,y}=\{x\in\pd\omega:\ \nu(x)\cdot y\leq r\}.\]
We assume that $\Omega=\omega\times\R$ and, without lost of generality, we assume that there exists $\epsilon>0$ such that for any $\theta\in\{y\in\mathbb S^{1}:|y-\theta_0|\leq\epsilon\} $ we have $\partial\omega_{-,\epsilon,\theta}\subset V'$. 
We consider   $\rho >\max(\rho_2,\rho_1') $, with $\rho_1'$ given in Corollary \ref{c2} and $\rho_2$ defined in Proposition \ref{p2}, and we fix $\theta\in\{y\in\mathbb S^{1}:|y-\theta_0|\leq\epsilon\} $, $\xi:=(\xi',\xi_3)\in\R^3$ satisfying $\xi_3\neq0$ and $\xi'\in\theta^{\bot}\setminus\{0\}$. 
Then,  we fix $u_1\in H^{1}(\Omega)$  a solution of $\Delta_{A_1} u_1+q_1u_1=0$ in $\Omega$ and $u_2\in H^{1}(\Omega)$  a solution of $\Delta_{A_2} u_2+\overline{q_2}u_2=0$ in $\Omega$ of the form \eqref{CGO1}-\eqref{CGO2} with $\rho>\rho_2$ and with $w_{j,\rho}$ satisfying \eqref{RCGO}.  Following the argumentation of Section 3, used for proving the decay property of $w_{j,\rho}$ which is given for $j=1$ by \eqref{tutu}, we can show that
$$\rho^{-1}\norm{w_{j,\rho}}_{H^1(\Omega)}+\norm{w_{j,\rho}}_{L^2(\Omega)}\leq C(\norm{A_{j}-A_{j,\rho}}_{L^2(\R^3)^3}+\rho^{-\frac{1}{8}})$$
and assuming that $\rho^{-\frac{1}{8}}$ admits a faster decay than $\norm{A_{j}-A_{j,\rho}}_{L^2(\R^3)^3}$ we get
\bel{t6f}\rho^{-1}\norm{w_{j,\rho}}_{H^1(\Omega)}+\norm{w_{j,\rho}}_{L^2(\Omega)}\leq C\norm{A_{j}-A_{j,\rho}}_{L^2(\R^3)^3}.\ee
In view of \eqref{t6a}, there exists $v_2\in H^1(\Omega)$ satisfying $\Delta_{A_2} v_2+q_2v_2=0$ and $\tau v_2=\tau u_1$, ${N_{A_2,q_2}v_2}_{|V}={N_{A_1,q_1}u_1}_{|V}$.  Combining this with \eqref{c1d} we deduce that 
$u=v_2-u_1$ solves the boundary value problem
 \bel{eq4}
\left\{\begin{array}{ll}
\Delta_{A_2} u+q_2u=2iA\cdot \nabla u_1+(q+i\textrm{div}(A)+|A_2|^2-|A_1|^2)u_1 &\mbox{in}\ \Omega,
\\
u=0 & \mathrm{on}\  \partial \Omega.\\
\end{array}\right.
\ee
In particular, we have
 $$\Delta u=-2iA_2\cdot \nabla u-(q_2+i\textrm{div}(A_2)-|A_2|^2)u+ 2iA\cdot \nabla u_1+(q+i\textrm{div}(A)+|A_2|^2-|A_1|^2)u_1\in L^2(\Omega)$$ and,  in view of \cite[Lemma 2.2]{CKS}, we deduce that $u\in H^2(\Omega)$.

Now let us show that $\pd_\nu u_{| V}=0$. We fix $w\in H^2(\Omega)$ satisfying supp$( w_{|\pd\Omega})\subset V$ and using the fact that ${N_{A_2,q_2}v_2}_{|V}={N_{A_1,q_1}u_1}_{|V}$, we get
$$\begin{aligned}0&=\left\langle N_{A_2,q_2}v_2,\tau w\right\rangle-\left\langle N_{A_1,q_1}u_1, \tau w\right\rangle\\
\ &=\int_\Omega (\nabla+iA_1)u_1\cdot \overline{(\nabla+iA_1)w}dx-\int_\Omega q_1u_1\overline{w}dx-\int_\Omega (\nabla+iA_2)v_2\cdot \overline{(\nabla+iA_2)w}dx+\int_\Omega q_2v_2\overline{w}dx\\
\ &=-\int_\Omega (\nabla+iA_2)u\cdot \overline{(\nabla+iA_2)w}dx+\int_\Omega q_2u\overline{w}dx+\int_\Omega [iu_1A\cdot\overline{\nabla w}-i(A\cdot\nabla u_1)\overline{w}-(|A_2|^2-|A_1|^2+q)u_1\overline{w}]dx.\end{aligned}$$
Applying  \eqref{c1d} and the fact that $u\in H^1_0(\Omega)$, we get
$$\begin{aligned}&\int_\Omega [iu_1A\cdot\overline{\nabla w}-i(A\cdot\nabla u_1)\overline{w}-(|A_2|^2-|A_1|^2+q)u_1\overline{w}]dx\\
&=-2i\int_\Omega (A\cdot\nabla u_1)\overline{w}dx-i\int_\Omega \textrm{div}(A) u_1\overline{w}dx-\int_\Omega (|A_2|^2-|A_1|^2+q)u_1\overline{w}dx\\
&=-\int_\Omega (\Delta_{A_2}u+q_2u)\overline{w}dx\\
&=-\int_\Omega \Delta u\overline{w}dx-2i\int_\Omega (A_2\cdot\nabla u)\overline{w}dx-i\int_\Omega \textrm{div}(A_2)u\overline{ w}dx+\int_\Omega (|A_2|^2-q_2)u\overline{w}dx\\
&=-\int_\Omega \Delta u\overline{w}dx-i\int_\Omega (A_2\cdot\nabla u)\overline{w}dx+i\int_\Omega A_2u\overline{ \nabla w}dx+\int_\Omega (|A_2|^2-q_2)u\overline{w}dx\\
&=-\int_\Omega \Delta u\overline{w}dx+\int_\Omega (\nabla+iA_2)u\cdot \overline{(\nabla+iA_2)w}dx-\int_\Omega \nabla u\cdot\overline{\nabla w}dx-\int_\Omega q_2u\overline{w}dx\end{aligned}$$
and it follows
$$\int_{\pd\Omega}\pd_\nu u\overline{w}d\sigma(x)=\int_\Omega \Delta u\overline{w}dx+\int_\Omega \nabla u\cdot\overline{\nabla w}dx=0.$$
Allowing $w\in H^2(\Omega)$, satisfying supp$( w_{|\pd\Omega})\subset V$, to be arbitrary, we deduce $\pd_\nu u_{| V}=0$. In the same way, multiplying \eqref{eq4} by $\overline{u_2}$ and then applying \eqref{c1d} and the Green formula, we get
\[\int_\Omega [2iA\cdot \nabla u_1\overline{u_2}+(q+i\textrm{div}(A)+|A_2|^2-|A_1|^2)u_1\overline{u_2}]dx=\int_{\pd\Omega} \partial_\nu u \overline{u_2}d\sigma(x).\]
Moreover, we have $\partial_\nu u_{| V}=0$ and  we get
\begin{equation}\label{t6e} \int_\Omega [2iA\cdot \nabla u_1\overline{u_2}+(q+i\textrm{div}(A)+|A_2|^2-|A_1|^2)u_1\overline{u_2}]dx=\int_{\pd\Omega\setminus V}\partial_\nu u\overline{u_2}d\sigma(x).\end{equation}
In view of \eqref{t6f},  we have
\bel{tatita}\norm{w_{2,\rho}}_{L^2(\pd \Omega)}\leq C\norm{w_{2,\rho}}_{H^{1}(\Omega)}^{\frac{1}{2}}\norm{w_{2,\rho}}_{L^2(\Omega)}^{\frac{1}{2}}\leq C \rho^{\frac{1}{2}}\norm{A_{2}-A_{2,\rho}}_{L^2(\R^3)^3}.\ee
Here we use the estimate
$$\norm{f}_{L^2(\pd\Omega)}\leq C\norm{f}_{H^{1}(\Omega)}^{\frac{1}{2}}\norm{f}_{L^2(\Omega)}^{\frac{1}{2}},\quad f\in H^1(\Omega),$$
which can be proved, in a similar way to bounded domains, by using local coordinates associated with $\pd\omega$ in order to transform, locally with respect to $x'\in\overline{\omega}$ for $x=(x',x_3)\in\overline{\omega}\times\R=\overline{\Omega}$, $\overline{\Omega}$ into the half space.
Applying \eqref{tatita} and  the Cauchy-Schwarz inequality, 
we obtain
\[\begin{aligned}\abs{\int_{\pd\Omega\setminus V}\partial_\nu u\overline{u_2}d\sigma(x)}&\leq\int_\R\int_{{\partial\omega}_{+,\epsilon,\theta}}\abs{\partial_\nu ue^{-\rho x'\cdot \theta}\left(\psi\left(\rho^{-\frac{1}{4}}x_3\right)b_{2,\rho}e^{i\rho x\cdot\eta}+w_{2,\rho}(x)\right)} d\sigma(x')dx_3 \\
 \ &\leq C\left(\int_{{\partial\omega}_{+,\epsilon,\theta}\times \R}\abs{e^{-\rho x'\cdot \theta}\partial_\nu u}^2d\sigma(x)\right)^{\frac{1}{2}}\left(\norm{\psi\left(\rho^{-\frac{1}{4}}\cdot\right)}_{L^2(\R)}+\norm{w_{2,\rho}}_{L^2(\pd \Omega)}\right)\\
\ &\leq C\rho^{\frac{1}{2}}\norm{A_{2}-A_{2,\rho}}_{L^2(\R^3)^3}\left(\int_{{\partial\omega}_{+,\epsilon,\theta}\times \R}\abs{e^{-\rho x'\cdot \theta}\partial_\nu u}^2d\sigma(x)\right)^{\frac{1}{2}}\end{aligned}\]
for some $C$ independent of $\rho$. 
This estimate and the Carleman estimate \eqref{c2a} implies
\begin{eqnarray}&&\abs{\int_\Omega [2iA\cdot \nabla u_1\overline{u_2}+(q+i\textrm{div}(A)+|A_2|^2-|A_1|^2)u_1\overline{u_2}dx}^2\cr
&&\leq C\rho\norm{A_{2}-A_{2,\rho}}_{L^2(\R^3)^3}^2\int_{{\partial\omega}_{+,\epsilon,\theta}\times \R}\abs{e^{-\rho x'\cdot \theta}\partial_\nu u}^2d\sigma(x)\cr
&&\leq \epsilon^{-1}C\rho\norm{A_{2}-A_{2,\rho}}_{L^2(\R^3)^3}^2\int_{{\partial\omega}_{+,\theta}\times \R}\abs{e^{-\rho x'\cdot \theta}\partial_\nu u}^2|\nu \cdot\theta| d\sigma(x)\cr
&&\leq \epsilon^{-1}C\norm{A_{2}-A_{2,\rho}}_{L^2(\R^3)^3}^2\left(\int_\Omega\abs{ e^{-\rho x'\cdot \theta}(-\Delta_{A_2} +q_2)u}^2dx\right)\cr
&&\leq \epsilon^{-1}C\norm{A_{2}-A_{2,\rho}}_{L^2(\R^3)^3}^2\left(\int_\Omega\abs{ e^{-\rho x'\cdot \theta}[2iA\cdot \nabla u_1+(q+i\textrm{div}(A)+|A_2|^2-|A_1|^2)u_1]}^2dx\right)\cr
&&\leq \epsilon^{-1}C\rho^2\norm{A_{2}-A_{2,\rho}}_{L^2(\R^3)^3}^2\norm{A}_{L^2(\R^3)}^2,\end{eqnarray}
where $C>0$ is a constant independent of $\rho$. Therefore, we have
$$\abs{\int_\Omega [2iA\cdot \nabla u_1\overline{u_2}+(q+i\textrm{div}(A)+|A_2|^2-|A_1|^2)u_1\overline{u_2}dx}\leq C\rho\norm{A_{2}-A_{2,\rho}}_{L^2(\R^3)^3}$$
and multiplying this inequality by $\rho^{-1}$ and sending $\rho\to+\infty$ we obtain from \eqref{a1a} that
$$\lim_{\rho\to+\infty}\rho^{-1}\abs{\int_\Omega [2iA\cdot \nabla u_1\overline{u_2}+(q+i\textrm{div}(A)+|A_2|^2-|A_1|^2)u_1\overline{u_2}dx}=0.$$
Combining this identity with the arguments of Section 4, we deduce that 
\bel{t6g}\xi_k \mathcal F(a_j)(\xi)-\xi_j \mathcal F(a_k)(\xi)=0,\quad 1\leq j<k\leq3\ee
for all $(\xi',\xi_3)\in\R^2\times\R$ such that $\xi'\in\theta^\bot\setminus\{0\}$, $\theta\in\{y\in\mathbb S^{1}:|y-\theta_0|\leq\epsilon\} $, $\xi_3\neq0$.
Since $A\in L^1(\R^3)$, we can extend by continuity the identity \eqref{t6g} to all $(\xi',\xi_3)\in\R^2\times\R$ such that $\xi'\in\theta^\bot$, $\theta\in\{y\in\mathbb S^{1}:|y-\theta_0|\leq\epsilon\} $, $\xi_3\in\R$. Consider the Fourier transform in $x'$ and $x_3$ given, for $f\in L^1(\R^3)$, by
$$\mathcal F'(f)(\xi',x_3)=(2\pi)^{-1}\int_{\R^2}f(x',x_3)e^{-ix'\cdot\xi'}dx',\quad \mathcal F_{x_3}(f)(x',\xi_3)=(2\pi)^{-\frac{1}{2}}\int_{\R}f(x',x_3)e^{-ix_3\xi_3'}dx_3.$$
It is clear that $\mathcal F A=\mathcal F'[\mathcal F_{x_3}A]$ and using the fact that, for all $\xi_3\in\R$, $x'\mapsto \mathcal F_{x_3}A(x',\xi_3)$ is  supported in $\overline{\omega}$ which is compact, we deduce that, for all $j=1,2,3$,  $\xi'\mapsto \mathcal Fa_j(\xi',\xi_3)$ is complex valued real analytic. Therefore, for all $\xi_3\in\R$, the function $\xi'\mapsto\xi_k \mathcal F(a_j)(\xi)-\xi_j \mathcal F(a_k)(\xi)$ is real analytic and it follows that the identity \eqref{t6g} holds true for all $\xi\in\R^3$.Thus, we have $dA_1=dA_2$. Then in a similar way to Section 4, we can prove that we can apply the gauge invariance to get
$$ \mathcal D_{A_1,q_1, V}=\mathcal D_{A_1,q_2,V}.$$
Repeating the above argumentation (see also \cite[Section 5]{Ki4}) we deduce that
$$\lim_{\rho\to+\infty}\int_{\R^3}\chi^2(\rho^{-\frac{1}{4}} x_3)q(x)e^{-i\xi\cdot x}dx=0,$$
for all $(\xi',\xi_3)\in\R^2\times\R$ such that $\xi'\in\theta^\bot\setminus\{0\}$, $\theta\in\{y\in\mathbb S^{1}:|y-\theta_0|\leq\epsilon\} $, $\xi_3\neq0$. Then, using the fact that $q\in L^1(\R^3)$, an application of the Lebesgue dominate convergence theorem implies that $\mathcal F(q)(\xi)=0$, 
for all $(\xi',\xi_3)\in\R^2\times\R$ such that $\xi'\in\theta^\bot$, $\theta\in\{y\in\mathbb S^{1}:|y-\theta_0|\leq\epsilon\} $, $\xi_3\in\R$. Then, using the fact that $q\in L^1(\R^3)$ and supp$(q)\subset\overline{\omega}\times\R$, we can repeat the above arguments in order to deduce that $q=0$ and $q_1=q_2$. This completes the proof of Theorem \ref{t6}.

\section{Extension to higher dimension}

In this section we discuss about  some possible extensions of our results to some class of domain $\Omega\subset\R^n$, $n\geq4$. For this purpose, let $n\geq4$ and consider $n_1,n_2\in\mathbb N$ such that $n_1+n_2=n$ and $n_1\geq3$. We fix also $\omega$ a bounded and $\mathcal C^2$  open set  of $\R^{n_1}$. Then our claim can be stated as follows: all the results of the present paper can be extended to any open and unbounded set $\Omega$ of $\R^n$ satisfying
\bel{op}\Omega\subset \Omega_2:=\omega\times\R^{n_2}.\ee
 Let us explain why our results can  also be extended to unbounded domains $\Omega$ satisfying \eqref{op}. The main ingredient are suitable CGO solutions for our problem. Once this is proved one can easily complete the proof of the uniqueness result by repeating our argumentation. Since here we know that $\omega$ is a bounded open set of $\R^{n_1}$ with $n_1\geq3$, instead of the construction of the present paper we will consider CGO solutions constructed by mean of a projection argument inspired by the analysis of \cite{BKS1,Ki1}. More precisely, we fix $\xi=(\xi',\xi'')\in\R^{n_1}\times\R^{n_2}$ and we consider $\eta,\theta\in\mathbb S^{n_1-1}$ such that $\eta\cdot\theta=\eta\cdot\xi'=\theta\cdot\xi'=0$.   For all $r>0$, we denote by $B_r'$ the ball of center zero and of radius $r$ of $\R^{n_1}$, we fix also $R:=\underset{x'\in\overline{\omega}}{\sup}|x'|$, $R_1:=2\sqrt{2}(R+2)$, $\tilde{\theta}=(\theta,0)\in\R^n$ and  $\tilde{\eta}=(\eta,0)\in\R^n$.  We set $\chi\in\mathcal C^\infty_0(\R^{n})$ such that $\chi\geq0$, $\int_{\R^{n}}\chi(x)dx=1$, supp$(\chi)\subset \{x\in\R^n:\ |x|<1\}$, and we define $\chi_\rho$ by
$\chi_\rho(x)=\rho^{{n\over 4}}\chi(\rho^{{1\over4}}x)$. Then, for $j=1,2$, we fix
$$A_{j,\rho}(x):=\int_{\R^{n}}\chi_\rho(x-y)A_j(y)dy.$$
In a similar way to Section 3.1, one can check that for all $x=(x',x'')\in B_{R+1}'\times\R^{n_2}$ 
the function
$$(s_1,s_2)\mapsto A_{j,\rho}(s_1\tilde{\theta}+s_2\tilde{\eta}+x)$$
will be supported in $\{z\in\R^2:\ |z|<R_1\}$. Thus,  we can define
$$ \begin{aligned}&\Phi_{1,\rho}(x):=\frac{-i}{2\pi} \int_{\R^2} \frac{(\tilde{\theta}+i\tilde{\eta})\cdot A_{1,\rho}(x-s_1\tilde{\theta}-s_2\tilde{\eta})}{s_1+is_2}ds_1ds_2,\\
 &\Phi_{2,\rho}(x):=\frac{-i}{2\pi} \int_{\R^2} \frac{(-\tilde{\theta}+i\tilde{\eta})\cdot A_{2,\rho}(x+s_1\tilde{\theta}-s_2\tilde{\eta})}{s_1+is_2}ds_1ds_2.\end{aligned}$$
Fixing
$$ b_{1,\rho}(x)=e^{\Phi_{1,\rho}(x)},\quad b_{2,\rho}(x)=e^{\Phi_{2,\rho}(x)},$$
we will obtain functions satisfying properties similar to those described in Section 3.1. Now let us fix $\psi\in\mathcal C^\infty_0(\R^{n_2})$  a real valued function. Applying the results of Section 3.2, which can be extended without any difficulty to this setting, one can construct solutions $u_j\in H^1(\Omega_2)$, $j=1,2$, of $\Delta_{A_j}u_j+q_ju_j=0$ on $\Omega_2$ of the form 
$$u_1(x',x'')=e^{\rho \theta\cdot x'}\left(\psi(x'')b_{1,\rho}(x',x'')e^{i\rho x'\cdot\eta-i\xi\cdot x}+w_{1,\rho}(x',x'')\right),\quad x'\in\omega,\ x''\in\R^{n_2},$$
$$u_2(x',x'')=e^{-\rho \theta\cdot x'}\left(\psi(x'')b_{2,\rho}(x',x'')e^{i\rho x'\cdot\eta}+w_{2,\rho}(x',x'')\right),\quad x'\in\omega,\ x''\in\R^{n_2},$$
with $w_j$ satisfying the decay property
$$\lim_{\rho\to+\infty}(\rho^{-1}\norm{w_{j,\rho}}_{H^1(\Omega_2)}+\norm{w_{j,\rho}}_{L^2(\Omega_2)})=0.$$
After that, allowing the cut-off function $\psi\in\mathcal C^\infty_0(\R^{n_2})$ to be arbitrary and repeating the arguments of Section 4 we can prove that all the results of this paper remain true when $\Omega\subset\R^n$ satisfies \eqref{op}.

\section*{Acknowledgments}

The  author would like to thank Pedro Caro for   fruitful discussions about recovery of bounded magnetic potentials. The
author is grateful to the anonymous referees for their careful reading and their suggestions that allow to
improve the paper. This work was partially supported by  the French National
Research Agency ANR (project MultiOnde) grant ANR-17-CE40-0029.

\end{document}